\newcommand{\mf}{\mathfrak}
\newcommand{\ol}{\ol}
\newcommand{\ra}{\rightarrow}
\newcommand{\mbb}{\mathbb}
\newcommand{\tn}{\textnormal}
\newcommand{\Max}{\text{Max}}
\newtheorem{de}{Definition}[section]
\newtheorem{re}[de]{Remark}
\newtheorem{pr}[de]{Proposition} 
\newtheorem{tr}[de]{Theorem}
\newtheorem{lm}[de]{Lemma}
\newtheorem{co}[de]{Corollary}
\newcommand{\MD}{\mbox{$\mathbb{D}$}}
\newcommand{\m}{\mathfrak m}
\newcommand{\simm}[1]{\underset{#1}{\sim}}
\newcommand{\pz}[1]{{\mathbb{Z}}_{+}^{#1}}
\newcommand{\nz}[1]{{\mathbb{Z}}^{#1}}
\def\vp{\rm \vspace{0.2cm}}
\def\M{\rm Max}
\def\m{\mf{m}}
\def\Um{\rm Um}
\def\GL{\rm GL}
\def\EO{\rm EO}
\def\E{\rm E}
\def\G{\rm G}
\def\Sp{\rm Sp}
\def\ko{\rm K_1O}
\def\ksp{\rm K_1Sp}
\def\k{\rm K_1}
\def\K{\rm K}
\def\O{\rm O}
\def\ESp{\rm ESp}
\def\EO{\rm EO}
\DeclareMathOperator{\Hom}{Hom}
\DeclareMathOperator{\rank}{rank}
\DeclareMathOperator{\hgt}{ht}
\DeclareMathOperator{\conv}{conv}
\DeclareMathOperator{\intt}{int}
\begin{document}
	\title[Elementary action of classical groups on unimodular rows over monoid rings]{Elementary action of classical groups on unimodular rows over monoid rings}
	
	
	\author[1]{\fnm{Rabeya} \sur{Basu}}\email{rbasu@iiserpune.ac.in}
	
	\author*[1]{\fnm{Maria} \sur{A. Mathew}}\email{maria.mathew@acads.iiserpune.ac.in}
	
	\affil[1]{\orgdiv{Department of Mathematics}, \orgname{Indian Institute of Science Education and Research (IISER) Pune}, \orgaddress{\city{Pune}, \postcode{411008}, \state{Maharashtra}, \country{India}}}


	\date{}
	\maketitle

	\noindent
	{\small Abstract: The elementary action of symplectic and orthogonal groups 
		on unimodular rows of length $2n$ is transitive for $2n\ge \max(4, d+2)$ in the symplectic case, and $2n\ge \max(6, 2d+4)$ in the orthogonal case, 
		over monoid rings $R[M]$, where $R$ is a commutative noetherian ring of dimension $d$, and 
		$M$ is commutative cancellative torsion free monoid. 
		As a consequence, one gets the surjective stabilization bound 
		for the ${\k}$ for classical groups. This is an extension of J. Gubeladze's results for linear groups.} \vp

	\noindent{\small {\it MSC 2020:
			11E57,
			11E70,
			13-02,
			15A63,
			19A13,
			19B14,
			20M25}}\vp

	\noindent{\small {\it Key words: Monoid ring, Classical group, Unimodular row, Elementary action, Milnor patching,
			Cancellative monoid, ${\k}$-stability,}} \vp

	Throughout the paper we will assume: (i) all rings are noetherian and commutative with identity, (ii) all monoids are commutative, cancellative and torsion-free. 
	
	\section{Introduction}\label{intro}

	The theory of unimodular rows became a popular topic in the study of classical algebraic $\K$-theory after A. Suslin produced the matrix theoretic proof of the celebrated Quillen--Suslin theorem; the main  ingredient of Serre's problem for projective modules. We see lots of results involving the theory of action of elementary group on unimodular rows in the works of Murthy--Swan--Tower--Mohan Kumar--Roitman--Vaserstien--Suslin--Kopeiko--Vavilov--Stepanov--Bhatwadekar--Rao {\it et al.} Later generalizations came for the traditional classical groups and for modules over Laurent polynomial rings. Since monoid rings are a natural extension of Laurent polynomial rings, D.F. Anderson \cite{MR526663-Anderson} conjectured the freeness of finitely generated projective modules over normal monoid rings over a PID. The proof of  Anderson's conjecture by J. Gubeladze \cite{Gubeladze2-maximalMR937805} led to the discussion of the nature of the action of elementary groups on unimodular rows over monoid rings. In two consecutive papers \cite{Gubeladze-umodrow1-MR1161570} \cite{Gubeladze3-el2MR1206629}, published in 1989 and 1991, Gubeladze proved the transitive action of elementary group on unimodular rows over monoid rings under certain assumptions on the monoid. Finally, in \cite{Gubeladze-unimodularMR3853049}, he gave a proof for for a much general case. In this paper, we deduce the analogue of Gubeladze's result for the traditional classical groups; {\it viz.} for the symplectic and orthogonal groups. \vp
	
	Let us consider a ring $R$ of Krull dimension $d$.  We fix the notation $\MD(R)$ as
	\[ 
	\MD(R):= \left\{
	\begin{array}{ll}
		\max\{4, d+2\}, & \text{ in the symplectic case, } \\
		\max\{6, 2d+4\}, & \text{ in the orthogonal case. }\\
	\end{array} 
	\right. 
	\]
	
	\noindent The aim of this article is to prove:
	
	\begin{tr}\label{main}
		Let $R$ be a ring and $M$ a monoid. Assume $2n \geq \MD(R)$. Then ${\E}(2n,R[M])$ acts transitively on ${\Um}(2n,R[M]),$ in other words, 
		\[\frac{{\Um}(2n,R[M])}{{\E}({2n},R[M])} = \{*\} ~~({\rm trivial}).\]
	\end{tr}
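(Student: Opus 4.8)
\noindent\emph{Sketch of the intended argument.}
The plan is to run Gubeladze's strategy for the linear group while replacing $\GL$ and its elementary subgroup everywhere by the symplectic/orthogonal group and its elementary subgroup $\ESp(2n,-)$, $\EO(2n,-)$, and to locate the bound $\MD(R)$ in the classical surjective--stability estimates for the base ring. The first step is to reduce to a finitely generated monoid. Given $v\in\Um(2n,R[M])$ together with a vector $w$ witnessing $\langle v,w\rangle=1$, only finitely many elements of $M$ occur among the coordinates of $v$ and $w$; hence both are defined over $R[M_0]$ for a finitely generated submonoid $M_0\subseteq M$. Since the inclusion $M_0\hookrightarrow M$ induces $\E(2n,R[M_0])\to\E(2n,R[M])$ compatibly with $\Um$, transitivity over $R[M_0]$ suffices. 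Thus I may assume $M$ finitely generated, so that its group completion is $\mathbb{Z}^r$ and $M$ sits inside $\mathbb{Z}^r$ with a well-defined rational cone $\mathbb{R}_+M\subseteq\mathbb{R}^r$.

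I would then isolate the two analytic inputs. The numbers $\max\{4,d+2\}$ and $\max\{6,2d+4\}$ are precisely the ranges in which the corresponding classical elementary group is already known (Vaserstein--Suslin--Kopeiko type stability) to act transitively on $\Um(2n,R)$ over the $d$-dimensional ring $R$ itself; this is the bottom of the induction and is what pins down the numerics, the weaker symplectic bound reflecting the better stability afforded by the alternating form. The second input is a local--global principle for the classical elementary action: if $v$ is carried to $e_1$ by an element of $\E(2n,R_{\m}[M])$ for every maximal ideal $\m$ of $R$, then $v$ is already trivial over $R[M]$. I would obtain this via the standard dilation lemma---a local solution is defined over $R_f[M]$ for some $f\notin\m$---glued over finitely many localizations by Milnor patching for unimodular rows and for the elementary subgroups.

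The heart of the proof is the descent through the monoid, by induction on $\mathrm{rank}\,M$ and on the combinatorial complexity of the cone $\mathbb{R}_+M$. After accounting for the unit group $M^{\ast}\cong\mathbb{Z}^s$, which contributes Laurent variables and is handled by iterating the Laurent case in the classical setting, one reduces to $M$ positive. Passing to the normalization $\bar M$ through the conductor produces a Milnor square relating $R[M]$, $R[\bar M]$ and their quotients by the conductor ideal, and Milnor patching reduces the claim for $M$ to lower-complexity claims for $\bar M$ and for the conductor quotient. For a positive normal monoid I would run Gubeladze's pyramidal/dilation descent: a suitable scaling endomorphism of the cone induces an automorphism of $R[M]$ which, combined with a Horrocks-type homotopy argument, contracts any unimodular row toward the degree-zero part $R$, where the base case of the previous paragraph applies; the rank induction guarantees termination.

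The main obstacle I expect is exactly the compatibility of this machinery with the form. One must verify that every monoid-theoretic maneuver---the dilation/pyramidal retraction, the conductor patching, and the gluing of locally obtained solutions---can be realized by \emph{symplectic} or \emph{orthogonal} elementary transformations rather than arbitrary linear ones, and that each constructed automorphism of $R[M]$ preserves the hyperbolic form and lands in $\ESp(2n,-)$ or $\EO(2n,-)$. In particular the patching and dilation lemmas must be re-proved for the classical elementary groups, and the terminal estimate must invoke $\MD(R)$ rather than the linear bound $\max\{3,d+2\}$; securing these form-preserving analogues is what upgrades Gubeladze's theorem to the present statement, and the terminal case simultaneously yields the surjective stabilization for $\k$ of the classical groups asserted in the abstract.
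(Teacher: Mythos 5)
Your overall skeleton --- the classical Suslin--Kopeiko bounds as the numerical input, a local--global principle over $\Max(R)$, and a Gubeladze-style descent in which every patching and dilation lemma is re-proved form-equivariantly for ${\E}(2n,-)$ --- is indeed the paper's skeleton, and your identification of form-compatibility as the main burden is accurate. However, your reduction from a general monoid to a normal one is a genuine gap. You propose to normalize through the conductor: a Milnor square relating $R[M]$, $R[\bar M]$ and the quotients by the conductor ideal $\mathfrak{c}$. But those quotient corners $R[M]/\mathfrak{c}$ and $R[\bar M]/\mathfrak{c}$ are not monoid rings, so no induction on monoid data (rank, complexity) can be applied to them, and their Krull dimension can be as large as $\dim R+\rank(M)-1$, so the hypothesis $2n\geq \MD(R)$ --- which depends only on $\dim R$ --- yields no transitivity over them either. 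Milnor patching of unimodular rows does not eliminate the need for such control: as in Proposition \ref{vor} and Corollary \ref{cmain}, patching only \emph{shifts} the burden of transitivity to the other corners of the square, so the conductor route stalls exactly at these quotients. The paper never normalizes through a conductor; instead a general monoid is reduced to a seminormal one by proving that elementary orbits are invariant under \emph{subintegral} extensions (Theorem \ref{ttsoh2}) --- a step whose proof itself requires transitivity of ${\E}(2n,\mathbb{Z}[M])$ for $\phi$-simplicial $M$ --- and a seminormal monoid is reduced to normal interior monoids $N_*$ via the cartesian squares of the face filtration (Proposition \ref{ttsoh1}), whose corners are again monoid algebras, so induction applies.

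Your terminal step is also misdescribed in a way that matters. There is no Horrocks-type theorem for monoid rings --- Gubeladze's machinery exists precisely because Quillen--Horrocks induction fails there --- and the induction cannot bottom out at transitivity over ``$R$ itself'': what is needed, and what the paper uses (Proposition \ref{p3}), is transitivity over $R[\pz{m}\oplus\nz{m'}]$, i.e.\ the full polynomial/Laurent Suslin--Kopeiko theorem, both for the base case of the induction ($\phi$-simplicial monoids, complexity $k(M)=0$) and \emph{inside} the descent, where it is applied to free submonoid algebras $R[\pz{r}(\Delta)]$ in Proposition \ref{plam}. Moreover, the homothety you invoke is at best a map between different submonoid algebras (or an endomorphism of $R[M]$), never an automorphism, so it cannot ``contract'' a unimodular row to the degree-zero part. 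The actual descent is an induction on the complexity $k(M)$: one first arranges the last coordinate to be monic for a pyramidal degree (height improvement over $t_1$-tilted algebras, Proposition \ref{p7.3} and Corollary \ref{c7.4}), then uses integrality of $R[\Gamma]\rightarrow R[M]/(u_{2n})$ together with semilocal stability (Lemma \ref{llocal}), a Nagata-endomorphism argument (Proposition \ref{plam}), and Karoubi squares combined with homothetic shrinking of polytopes (Lemma \ref{karoubi}). Without the monic/pyramidal mechanism your contraction has no termination criterion, and without the polynomial-ring base case it has nowhere to land.
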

	
	Next we obtain the above result in the relative case and establish surjective stabilization bounds for the relative Whitehead group $\widetilde{K}_1$ (where  $\widetilde{K}_1$ is either ${\ksp}$ or ${\ko}$).

	\begin{tr}\label{main2}
		Let R be a ring, $I \subseteq R$ an ideal and M a monoid. Then the canonical map 
		\[\varphi_{k}: \frac{{\G}(k,R[M], IR[M])}{{\E}(k,R[M], IR[M])} \rightarrow \widetilde{K}_1(R[M], IR[M])\]
		is surjective for $k \geq \MD(R)- 2$.
	\end{tr}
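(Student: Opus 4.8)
The plan is to derive Theorem \ref{main2} from the absolute transitivity of Theorem \ref{main} in two movements: upgrade the transitivity to the relative level, and then convert relative transitivity into surjective stabilization. Recall that by definition
\[
\widetilde{K}_1(R[M], IR[M]) = \varinjlim_{k}\, \frac{\G(k, R[M], IR[M])}{\E(k, R[M], IR[M])},
\]
so surjectivity of $\varphi_k$ amounts to showing that each stabilization map $\G(l)/\E(l) \to \G(l+2)/\E(l+2)$ (entries in $R[M]$, relative to $IR[M]$) is surjective once $l \geq k = \MD(R)-2$. Concretely, for $l+2 \geq \MD(R)$ I must show that every $\sigma \in \G(l+2, R[M], IR[M])$ is $\E(l+2, R[M], IR[M])$-equivalent to a matrix $\tau \perp I_2$ with $\tau \in \G(l, R[M], IR[M])$.

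First I would establish the relative analogue of Theorem \ref{main}: for $2n \geq \MD(R)$ the group $\E(2n, R[M], IR[M])$ acts transitively on the relative unimodular rows $\Um(2n, R[M], IR[M])$, i.e.\ those congruent to $e_1 \bmod IR[M]$. Given such a row $v$, the absolute theorem furnishes $\eps \in \E(2n, R[M])$ carrying $v$ to $e_1$; reducing modulo $I$, the image $\overline{\eps}$ fixes $e_1$ in $\E(2n, (R/I)[M])$. Using the structure of the stabilizer of $e_1$ and lifting it to the stabilizer of $e_1$ inside $\E(2n, R[M])$, I correct $\eps$ by an element fixing $e_1$ so that the product lies in $\E(2n, R[M], IR[M])$ and still carries $v$ to $e_1$. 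This is the standard relative-to-absolute splitting.

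With relative transitivity available, the reduction is routine. For $\sigma \in \G(l+2, R[M], IR[M])$ with $l+2 \geq \MD(R)$, the first row is a relative unimodular row of length $l+2$, which relative transitivity normalizes to $e_1$ by a matrix in $\E(l+2, R[M], IR[M])$. In the symplectic and orthogonal settings, once the first basis vector and its hyperbolic partner are fixed, the standard block decomposition of a classical-group element fixing a hyperbolic plane shows that $\sigma$ is $\E(l+2, R[M], IR[M])$-equivalent to some $\tau \perp I_2$ with $\tau \in \G(l, R[M], IR[M])$. This yields surjectivity of the stabilization map, and hence of $\varphi_k$.

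The main obstacle is the relative transitivity step, specifically the lifting of the stabilizer of $e_1$ from $(R/I)[M]$ to $R[M]$ inside the \emph{elementary classical} group: for $\ESp$ and $\EO$ one must verify that the stabilizer of the relevant hyperbolic pair is generated by elements admitting explicit lifts that fix the pair and reduce correctly modulo $I$, so that the correction term can be taken relative to $IR[M]$. An equivalent route is a fiber-product (Milnor patching) argument over $R[M] \times_{(R/I)[M]} R[M]$; either way, once this structural input is secured, the colimit reformulation and the block reduction are formal.
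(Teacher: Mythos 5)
Your overall architecture agrees with the paper's: first upgrade Theorem \ref{main} to relative transitivity of ${\E}(2n,R[M],IR[M])$ on ${\Um}(2n,R[M],IR[M])$ (this is the paper's Theorem \ref{rel}), then normalize a row/column of a relative classical matrix to a standard basis vector and split off a smaller block; for that splitting the paper invokes Lemma 3.6 of \cite{Basu-Rao-MR2578583}, which is exactly your ``block decomposition of an element fixing a hyperbolic pair''. The genuine gap is in your primary route to relative transitivity. Given $v\eps = e_1$ with $\eps \in {\E}(2n,R[M])$, and a lift $\delta \in {\E}(2n,R[M])$ of $\overline{\eps}$ fixing $e_1$, the correction $\eps\delta^{-1}$ is an elementary matrix congruent to the identity modulo $IR[M]$ --- but that only places it in ${\E}(2n,R[M]) \cap {\G}(2n,R[M],IR[M])$, \emph{not} in ${\E}(2n,R[M],IR[M])$, which by definition is the normal closure of ${\E}(2n,IR[M])$ in ${\E}(2n,R[M])$. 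Identifying these two groups is an excision-type statement you cannot assume. Moreover, producing the lift $\delta$ inside the stabilizer of $e_1$ in the elementary symplectic/orthogonal group is itself the hard content: this is precisely the Vaserstein-type relative splitting lemma, and its standard proofs themselves go through an excision ring or double. So the ``standard relative-to-absolute splitting'' is not available off the shelf, and as written this step does not close.

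The route you mention only in passing --- the fiber product --- is in substance what the paper does, but the observation that makes it work is absent from your sketch and is the actual crux. The paper forms the excision ring $R[M] \oplus IR[M]$ (isomorphic to the double $R[M] \times_{(R/I)[M]} R[M]$) and proves in Lemma \ref{lem} that $R[M] \oplus IR[M] \simeq (R \oplus I)[M]$: it is \emph{again a monoid ring}, over the noetherian ring $R \oplus I$ of the same Krull dimension $d$. Only because of this can the absolute Theorem \ref{main} --- a statement about monoid rings, not arbitrary rings --- be applied to the excision ring at the same bound $2n \geq \MD(R)$; Lemma \ref{le} then transports the resulting elementary matrix over $(R\oplus I)[M]$ to an element of the relative group ${\E}(2n,R[M],IR[M])$ via the homomorphism $\phi$, bypassing both difficulties above in one stroke. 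If you replace your stabilizer-lifting step by this excision argument, your proof becomes the paper's; without it, the relative transitivity that everything else rests on is unproved.
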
 
	
	The reader may observe that using techniques in \cite{Gubeladze-unimodularMR3853049}, \cite{Maria-MKK1}, \cite{DhorajiaSymplectic-MR2971853}, \cite{Keshari-SymplecticMR2295082} and \cite{Bhatwadekar-SymplecticMR1858341} these results can be extended to non torsion-free monoids and for monoid extensions of certain overrings. Further, these results open up a whole host of questions on stabilization in the setup of other classical groups and possibly even for higher $K$-groups. \vp

	{\bf Layout of the paper:} 
	As has been the tradition with results in $K$-theory of monoid rings, owing to the connection monoid rings share with convex geometry, the paper will be a good mix of commutative algebra and convex geometry. We divide and discuss the preliminaries in two sections: The first summarizes the theory in rings that the reader needs to be familiar with and the second summarizes the theory of monoids and their corresponding rings. In Section 4, we will simplify the hypothesis by showing that it is enough to consider affine positive normal monoids. Section 5 is dedicated to proving Theorem \ref{main} and as its application Section 6 establishes surjective bounds for $\widetilde{K}_1$.

	\section{Classical Groups}\label{prelim}

	\noindent  Let $I \subseteq R$ be an ideal and $n>0$ a positive integer. Let $e_i$'s  denote the standard basis vectors with $1$ in the $i$'th position and $0$ elsewhere, and $e_{ij}(\lambda)$'s the matrix $\lambda e_ie_j^T$.
	By ${\Um}(n,R,I)$ one denotes the set consisting of elements $r=(r_1, \ldots, r_n) \in R^n$ satisfying the following:
	\begin{enumerate}[label=(\roman*), noitemsep]
		\item $r=e_1$ \text{ modulo } $I$ and
		\item $\exists$ $s=(s_1, \ldots, s_n) \in R^n$ such that $\smashoperator[r]{\sum_{i=1}^{n}} r_is_i = 1$.
	\end{enumerate}
	When $I=R,$ instead of ${\Um}(n,R,R)$ we use ${\Um}(n,R)$. To define the traditional classical groups, {\it viz.} symplectic and orthogonal groups, we 
	consider the following forms (resp.) on $R^2$, $ \phi_1 = \begin{pmatrix}
		0 & 1 \\
		-1 & 0 
	\end{pmatrix}$ and $\psi_1 = \begin{pmatrix}
		0 & 1 \\
		1 & 0 
	\end{pmatrix}$. On iterating, we may define the forms on $R^{2n}$:
	\[ \phi_n = \phi_{n-1} \perp \phi_1 
	\text{ and  } \psi_n = \psi_{n-1} \perp \psi_1.\]
	These forms help  define an inner products on $R^{2n}$ given by:
	\[\langle u,v \rangle:= \left\{
	\begin{array}{ll}
		u^T\phi_n v, & \text{ in the symplectic case, } \\
		u^T\psi_n v, & \text{ in the orthogonal case. }\\
	\end{array} 
	\right. \]
	\begin{re}\tn{
			When dealing with results in the setup of orthogonal groups, we will assume $char(R) \neq 2$ and any unimodular element $v$ is required to have an extra property of being isotropic, {\it i.e.}, $\langle v,v\rangle = 0.$}
	\end{re}
	
	Next we define the classical groups of even sizes: 
	
	\begin{de}
		The symplectic group $({\Sp}_{2n}(R))$ and orthogonal group $({\O}_{2n}(R)),$ over $R,$ are given by:

		\[{\Sp}_{2n}(R)= \{ \alpha \in {\GL}_{2n}(R) \mid \alpha^T\phi_n\alpha = \phi_n \}\] 
		\[{\O}_{2n}(R) = \{ \alpha \in {\GL}_{2n}(R) \mid \alpha^T\psi_n\alpha = \psi_n \}\]  
	\end{de}
	\noindent Let ${\rm Id}_{2n}$ denote the identity matrix of size $2n$ and $\sigma$ be the product of transpositions given by $\sigma(2i) = (2i-1)$ and $\sigma(2i-1) = 2i,$ for $1 \leq i \leq k = 2n$, {\it i.e.}, $\sigma = (1,2)(3,4)\ldots(2n-1,2n).$
	For $\lambda \in R$ and $1 \leq i < j \leq k=2n$ with $\sigma(i) \neq j,$ one defines 
	\begin{align*}
		se_{ij}(\lambda) &= {\rm Id}_{2n}+\lambda e_{ij} - \lambda (-1)^{i+j}e_{\sigma(j)\sigma(i)}  \\
		oe_{ij}(\lambda) &= {\rm Id}_{2n}+\lambda e_{ij} -\lambda e_{\sigma(j)\sigma(i)}
	\end{align*} 
	
	For values of $n,i,j, \lambda$ as indicated above, ${\ESp}_{2n}(R)$ is the elementary subgroup of ${\Sp}_{2n}(R)$ generated by $e_{i(\sigma(i))}(\lambda)$'s and $se_{ij}(\lambda)$'s and similarly ${\EO}_{2n}(R)$ is the elementary subgroup of ${\O}_{2n}(R)$ generated by $oe_{ij}(\lambda)$'s. Observe, in the orthogonal case we do not get generators for the cases $i=\sigma(j)$. For simplicity, we use the uniform notation $ge_{ij}(\lambda)$ for these elementary generators in both cases. 
	We condense some notations and write:
	\[ 
	{\G}(2n,R):= \left\{
	\begin{array}{ll}
		{\Sp}_{2n}(R), & \text{in the symplectic case, } \\
		{\O}_{2n}(R), & \text{in the orthogonal case. }\\
	\end{array} 
	\right. 
	\]
	
	\[ 
	{\E}(2n,R):= \left\{
	\begin{array}{ll}
		{\ESp}_{2n}(R), & \text{in the symplectic case, } \\
		{\EO}_{2n}(R), & \text{in the orthogonal case. }\\
	\end{array} 
	\right. 
	\]

	Let $J \subseteq R$ be an ideal and $k=2n$. Then we may define the relative subgroups ${\G}(k,R,J)$ as follows: 
	\[{\G}(k,R,J) =\{\alpha \in {\G}(k,R) \mid \alpha \equiv {\rm Id}_k~mod~J\}.\]
	For an integer $k>0$, we denote by ${\E}(k,J)$ the subgroup of ${\E}(k,R),$ generated by $ge_{ij}(\lambda)$, where $\lambda$ varies over $J$. The relative subgroup ${\E}(k,R,J)$ is defined as the normal closure of ${\E}(k,J)$ in ${\E}(k,R)$.
	
	Given the definition above there arises a natural group action of the elementary subgroup ${\E}(k,R,J)$ on ${\Um}(k,R,J)$. For the sake of clarity, we fix this action as multiplication on the right, {\it i.e.}, for $\sigma \in {\E}(k,R,J)$ and $u \in {\Um}(k,R,J)$
	\[\sigma.u := u\sigma \in {\Um}(k,R,J).\]
	
	\noindent Let $u, v \in {\Um}(k,R,J)$. By writing 
	\[u \simm{{\E}(k,R,J)} v\]
	we mean that $u$ and $v$ are in the same  orbit induced by the action of ${\E}(k,R,J)$ on ${\Um}(k,R,J)$. Often when unambiguous, we may use $u \sim v$. We denote the orbit space as ${\Um}(n,R,J)/{\E}(n,R,J)$ and often write ${\Um}(n,R,J)/{\E}(n,R,J) = \{*\},$ to represent trivial orbit space. We define $\widetilde{K}_1$ as  ${\ksp}$ in the symplectic case and ${\ko}$ in the orthogonal case.
	
	\begin{pr}\label{p3}$($\cite{Sus-K-MR0469914}, \cite{Kop-MR497932}$)$\label{pl} Let $R$ be a ring with identity with Krull dimension $d$. Then the orbit space is trivial under the action of elementary subgroups on unimodular rows over $R[\pz{m} \oplus \nz{m'}]$ for $2n \ge \MD(R)$, {\it i.e.},
		\[\frac{{\Um}(2n,R[\pz{m} \oplus \nz{m'}])}{{\E}(2n,R[\pz{m} \oplus \nz{m'}]) }= \{*\}.\]   
		Further, if R is semi-local and $n \geq 2,$ then ${\Um}(2n,R)/{\E}(2n,R) = \{*\}$.
	\end{pr}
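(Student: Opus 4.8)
This is the symplectic/orthogonal analogue of Suslin's surjective $\k$-stability theorem over polynomial and Laurent extensions, so the plan is to follow the Suslin--Kopeiko strategy, reading the generic group $\E(2n,-)$ as $\ESp_{2n}$ or $\EO_{2n}$ throughout. Writing $R[\pz{m}\oplus\nz{m'}] = R[X_1,\ldots,X_m,Y_1^{\pm1},\ldots,Y_{m'}^{\pm1}]$, I would establish the second (semi-local) assertion first, since it supplies the base case for the bound appearing in the first assertion.

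\textbf{Semi-local case.} Over a semi-local ring $R$ and for $n\geq 2$, I would reduce an (isotropic, in the orthogonal case) unimodular $v\in\Um(2n,R)$ to $e_1$ directly. Modulo the Jacobson radical $R$ becomes a finite product of local rings, over each of which unimodularity forces some coordinate of a hyperbolic pair to be a unit; using the generators $ge_{ij}(\lambda)$ one clears the remaining coordinates pair by pair within the form, and then lifts the reduction along the radical, which is harmless for the elementary orbit. The hypothesis $n\geq2$ is exactly what provides the second hyperbolic plane needed to carry out these moves inside $\E(2n,R)$.

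\textbf{General bound.} For the polynomial--Laurent extension the engine is a Quillen--Suslin local--global principle together with a Horrocks-type theorem for the elementary classical groups (Suslin in the linear setting, Kopeiko for $\ESp$ and $\EO$). I would induct on $m+m'$. Given $v\in\Um(2n,R[\pz{m}\oplus\nz{m'}])$, the local--global principle reduces triviality of its orbit to the corresponding statement over $R_\m$ after adjoining the same variables, for every maximal ideal $\m$ of $R$. Over the local ring $R_\m$ the Horrocks principle shows that $v$ is elementarily equivalent to a row that is extended, i.e.\ independent of the last adjoined variable; peeling variables one at a time this way reduces to a constant row over $R_\m$, which is handled by the semi-local case since $2n\geq\MD(R)\geq4$ forces $n\geq2$. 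Patching the local elementary trivializations yields global orbit triviality. The point of localizing over $R$ rather than over the large ring $R[\pz{m}\oplus\nz{m'}]$ is that the relevant dimension stays $d$, so the bound remains $2n\geq\MD(R)$ and does not grow with $m+m'$.

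\textbf{Main obstacle.} The crux is the Horrocks-type theorem for $\ESp$ and $\EO$, especially in the Laurent directions: for a polynomial variable one inverts a monic polynomial to pass to the extended case, but for $Y_i^{\pm1}$ the symmetry between positive and negative degrees blocks the naive monic-inversion trick, and one must instead exploit the $\mathbb Z$-grading and the units $Y_i$ to split off the Laurent part. A secondary point is preserving the isotropy condition $\langle v,v\rangle=0$ and the hypothesis $\mathrm{char}(R)\neq2$ through every orthogonal reduction, so that each elementary move is chosen inside $\EO_{2n}$ rather than the full linear group.
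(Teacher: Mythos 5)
This proposition is not proved in the paper at all: it is imported as known, with the orthogonal case credited to Suslin--Kopeiko \cite{Sus-K-MR0469914} and the symplectic case to Kopeiko \cite{Kop-MR497932}, and it then serves as an input to Proposition \ref{p7.3}, Lemma \ref{llocal} and Proposition \ref{plam}. So there is no internal argument to compare yours against; judged against the cited sources, your outline follows the standard and correct route those references take -- the semilocal case for $n \geq 2$ as terminus, dilation plus Quillen--Suslin patching with localization only at ${\rm Max}(R)$ (which is exactly what keeps the bound at $\MD(R)$ instead of growing with $m+m'$), and a Horrocks-type splitting to peel off the adjoined variables over the local base.

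Two points deserve sharpening. First, the stated bounds $\max(4,d+2)$ and $\max(6,2d+4)$ are not produced by the Horrocks step itself but by the preparatory general-position step that your sketch leaves implicit: before Horrocks can be applied one must bring an entry of the row to monic form (unit leading coefficient) by elementary symplectic/orthogonal moves, and this is a prime-avoidance/height argument on the ideals generated by initial segments of the row; in the orthogonal case only entries compatible with the form (in effect the odd-indexed ones) may be consumed in this argument, which is why that bound doubles to $2d+4$. This is precisely the step the present paper transplants to the monoid setting as Proposition \ref{p7.3} and Corollary \ref{c7.4}, so your phrase ``Horrocks-type theorem'' conceals the exact place where the hypothesis $2n \geq \MD(R)$ does its work. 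Second, your identification of the Laurent directions as the main obstacle is accurate: a positively graded local--global principle (such as Proposition \ref{tlg}) does not apply to the $\nz{m'}$-part, so the unit and grading manipulations you allude to are genuinely required there and are supplied by the cited sources rather than by the polynomial-variable argument; acknowledging this as an unresolved obstacle rather than resolving it is acceptable only because the statement itself is a quoted result.
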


	\begin{lm} $(${\it cf.} \cite{MR1958377}, Lemma 5.1$)$
		\label{noeth} 
		Let $R$ be a ring and $s\in R$ a non-zerodivisor. 
		Then there exists a natural number 
		$k$ such that the homomorphism
		$($induced by the localization homomorphism $R \ra R_s)$ ${\E}(2n, s^kR) \ra {\E}(2n, R_s)$  is injective.  
	\end{lm}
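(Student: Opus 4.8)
The plan is to prove injectivity by showing that the kernel of the induced homomorphism is trivial once $k$ is large enough. First I would record an elementary closure property: each generator $ge_{ij}(\lambda)$ with $\lambda \in s^kR$ satisfies $ge_{ij}(\lambda) \equiv {\rm Id}_{2n} \bmod s^kR$, and since $s^kR$ is an ideal, a product $({\rm Id}_{2n}+s^kA)({\rm Id}_{2n}+s^kB) = {\rm Id}_{2n}+s^k(A+B+s^kAB)$ is again congruent to ${\rm Id}_{2n}$ modulo $s^kR$, and likewise for inverses. Hence every $\alpha \in \E(2n, s^kR)$ lies in $\G(2n,R,s^kR)$, i.e. all entries of $\alpha - {\rm Id}_{2n}$ lie in $s^kR$. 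Consequently, if $\alpha$ maps to ${\rm Id}_{2n}$ in $\E(2n,R_s)$, then each entry of $\alpha - {\rm Id}_{2n}$ lies in $s^kR \cap \Ker(R \to R_s)$, and it suffices to arrange that this intersection is zero.

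Next I would control $\Ker(R \to R_s) = \{r \in R : s^m r = 0 \text{ for some } m\}$. Because $R$ is noetherian, the ascending chain $\ann(s) \subseteq \ann(s^2) \subseteq \cdots$ stabilizes, say $\ann(s^N) = \ann(s^{N+1}) = \cdots$, so that $\Ker(R \to R_s) = \ann(s^N)$. Taking $k \ge N$, I would check that $s^kR \cap \ann(s^N) = 0$: an element $x = s^k y$ with $s^N x = 0$ gives $s^{N+k}y = 0$, so $y \in \ann(s^{N+k}) = \ann(s^N) = \ann(s^k)$, whence $x = s^k y = 0$. This forces $\alpha - {\rm Id}_{2n} = 0$ and proves injectivity for every $k \ge N$. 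In the present situation $s$ is a non-zerodivisor, so in fact $N = 0$ and $\Ker(R \to R_s) = 0$; the argument then collapses to the single observation that $R \hookrightarrow R_s$ and any $k$ works, but the bookkeeping above is what makes the statement (and the threshold $k$ together with the noetherian hypothesis) meaningful in the general formulation of \cite{MR1958377}.

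The only genuine obstacle I anticipate is the first, purely group-theoretic, reduction: verifying that an arbitrary element of $\E(2n, s^kR)$ — a word in the generators $ge_{ij}(\lambda)$, $\lambda \in s^kR$ — has every entry of $\alpha - {\rm Id}_{2n}$ inside the ideal $s^kR$, so that the problem descends to the commutative-algebra statement about $s^kR \cap \Ker(R \to R_s)$. Once that congruence is in place, the conclusion is immediate from the injectivity of $R \to R_s$ (or, in the zero-divisor version, from the noetherian stabilization of annihilators). I do not expect any difficulty beyond this reduction: no patching or stability input is needed here, and the lemma functions simply as a localization bookkeeping device feeding the later Milnor-patching arguments.
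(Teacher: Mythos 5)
Your proof is correct and matches the paper's treatment: the paper gives no argument of its own for Lemma \ref{noeth} (it is quoted, with ``cf.'', from \cite{MR1958377}), and the standard proof of that cited lemma is exactly your combination of (i) every element of ${\E}(2n, s^kR)$ is congruent to ${\rm Id}_{2n}$ modulo the ideal $s^kR$, and (ii) noetherian stabilization of the chain $\ann(s) \subseteq \ann(s^2) \subseteq \cdots$ forces $s^kR \cap \Ker(R \to R_s) = 0$ once $k \geq N$. You are also right about the finer point that, as stated here with $s$ a non-zerodivisor, the conclusion is immediate for every $k$ (since $R \hookrightarrow R_s$ already injects on all matrices); the threshold $k$ and the noetherian hypothesis are only genuinely needed in the general form of the cited lemma, where $s$ may be a zero-divisor, which is precisely the bookkeeping your write-up supplies.
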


	\section{Monoids}\label{s2}
	\noindent Now we shift our focus to the theory of monoids. Let $M$ be a commutative multiplicative monoid with identity 1. 
	Let $\mbb{R}_+$ be the set of non-negative real numbers, and $\mbb{Z}_+$ of non-negative integers. By $\intt(M)$ we denote the elements of $M$ in the interior of the cone generated by $M,$ $i.e.$,
	\[\intt(M) = M \cap \intt(\mathbb{R}_+{M}).\] 
	
	\noindent Let us denote by $M_* = \intt(M) \cup \{1\}$. Let ${\rm gp}(M)$ denote the group completion of $M$ and ${\rm U}(M)$ the group of units of $M$. The monoid $M$ is called {positive }, if ${\rm U}(M)$ is trivial. Further, $M$ is defined to be {cancellative} if for $x, y, z \in M,$ $xz=yz$ implies $x=y$ and {torsion free}, if $x, y \in M,$ $n \in \pz{}$ and $x^n = y^n,$ then $x = y$. 
	
	One can restate the characteristic property of cancellation in terms of a property held by the corresponding inclusion map $i: M \rightarrow {\rm gp}(M)$. Note that $M$ is cancellative if and only if the map $i$ is injective. In this case, the torsion-freeness of $M$ is equivalent to the torsion-freeness of ${\rm gp}(M)$. The {rank} of $M,$ denoted by rank$(M)$, is the dimension of the $\mathbb{Q}$-vector space ${\rm gp}(M) \otimes \mathbb{Q}$. We say, $M$ is {affine} if it is a isomorphic to a submonoid of $\nz{d}$ for some $d \in \pz{},$ and is finitely generated over $\mbb{Z}_+$, {\it i.e.}, there exists $m_1,\ldots, m_k\in M$ such that $M=\mbb{Z}_+\langle m_1, \ldots, m_k\rangle$. For the sake of convenience, when dealing with affine monoids we often favour additive operation on the monoid. For an affine monoid $M,$ the entity $\mathbb{R}_+(M)$, called the cone of $M$, is given by:
	\[C:=\mathbb{R}_+(M) = \{a_1m_1 + \cdots +a_km_k \mid k \in \pz{}, a_i \in \mathbb{R}_+ \text{ and } m_i \in M\}. \]
	
	The assumption on our monoids to be commutative, cancellative and torsion-free is significant since it allows to conveniently embed the monoid ring $R[M]$ in the Laurent polynomial ring $R[X_1^{\pm{1}}, \ldots, X_r^{\pm{1}}],$ where $r$ is the rank of $M$. In the interest of clarity, one may note that in such a setup ${\rm gp}(M)$ will be torsion-free and thus isomorphic to $ \nz{r}$. With this understanding we proceed to our next set of definitions.
	
	Let $M \subseteq N$ be an extension of monoids. Then $N$ is said to be integral over $M$, if for $x \in N,$ there exists $c \in \mathbb{N}$ such that $x^c \in M$. Much like in the theory of rings, we may define integral closure of $M$ in $N$ and say $M$ is normal if it is integrally closed in ${\rm gp}(M)$. The monoid $M$ is said to be seminormal, if for $x \in {\rm gp}(M)$, whenever $x^2, x^3 \in M,$ then $x \in M$. It follows right away that a normal monoid is seminormal. 
	The monoid $M$ is defined as
	{$\phi$-simplicial}, if there exists an embedding $i_M$ of $M$  in $\pz{r}$ such that $\pz{r}$ is integral over $i_M(M)$ ({\it cf.}\cite{Gubeladze1-bookMR2508056}). 
	We are now in a position to discuss important (convex) geometrical aspects of monoids relevant to this article.

	\begin{pr}\label{ts}
		$($\cite{Gubeladze1-bookMR2508056}, Proposition 2.40$)$ An affine monoid $M$ is seminormal if and only if $(F_i \cap M)_*$ is normal for all $F_i \in Faces(C)$.
	\end{pr}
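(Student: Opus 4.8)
The plan is to reduce both conditions to statements that can be checked one face at a time, using the disjoint decomposition of the cone $C = \mathbb{R}_+(M)$ into the relative interiors of its faces, $C = \bigsqcup_{F_i \in Faces(C)} \intt(F_i)$. Since $M \subseteq C$, this gives $M = \bigsqcup_{i}\big(\intt(F_i)\cap M\big)$, and likewise for the normalization. First I would record the local meaning of the right-hand condition: for a face $F_i$, the monoid $(F_i \cap M)_*$ has group completion ${\rm gp}(F_i\cap M)$ and cone $F_i$ (the interior points of a face generate both its group and its cone), so $(F_i\cap M)_*$ is normal precisely when every $x \in {\rm gp}(F_i\cap M)\cap \intt(F_i)$ admitting a power $x^c \in M$ in fact lies in $M$. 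Because such an $x$ is automatically integral over $F_i\cap M$ (it lies in the normalization $F_i\cap{\rm gp}(F_i\cap M)$), this says exactly that on $\intt(F_i)$ the monoid $M$ agrees with the normalization computed inside the face group ${\rm gp}(F_i\cap M)$ --- note that this group may be strictly smaller than ${\rm gp}(M)\cap\mathbb{R}F_i$, which is the whole point separating seminormality from normality.

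For the direction asserting that face-wise normality implies seminormality, suppose each $(F_i\cap M)_*$ is normal and take $x \in {\rm gp}(M)$ with $x^2,x^3 \in M$. Then $x \in C$ (since $x^2 \in C$), so $x$ lies in $\intt(F_i)$ for a unique face $F_i$, and as powers of an interior point stay interior to the same face, $x^2,x^3 \in \intt(F_i)\cap M \subseteq (F_i\cap M)_*$. The key observation is that $x = x^3\,(x^2)^{-1} \in {\rm gp}(F_i\cap M)$, so $x$ lies in the group completion of $(F_i\cap M)_*$; normality of $(F_i\cap M)_*$ together with $x^2 \in (F_i\cap M)_*$ then yields $x \in (F_i\cap M)_* \subseteq M$. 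This direction is short.

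The substantial direction is that seminormality of $M$ forces each $(F_i\cap M)_*$ to be normal, and this is where I expect the main difficulty. Fix a face $F_i$ and an element $x \in {\rm gp}(F_i\cap M)\cap\intt(F_i)$; as noted, some power $x^c$ lies in $M$, and the goal is to descend to $x\in M$ using only the square-and-cube hypothesis of seminormality. I would set $T = \{\, n \ge 1 : x^n \in M \,\}$, which is closed under addition. The crucial input is a conductor element: since $F_i\cap M$ is affine with normalization $F_i\cap{\rm gp}(F_i\cap M)$, there is an interior element $d \in F_i\cap M$ with $d\cdot\big(F_i\cap{\rm gp}(F_i\cap M)\big)\subseteq F_i\cap M$; because $x$ is interior, $x^n d^{-1}$ lies in this normalization for all large $n$, so $x^n = d\cdot(x^n d^{-1}) \in F_i\cap M \subseteq M$ eventually, i.e. $T$ is cofinite in $\mathbb{Z}_{\ge 1}$ and in particular $\gcd(T)=1$. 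Seminormality applied to the element $x^k$ gives the implication $2k,3k \in T \Rightarrow k \in T$. A short Frobenius-type argument then finishes: if $T \neq \mathbb{Z}_{\ge 1}$, let $f$ be its largest gap; then $2f,3f > f$ both lie in $T$, whence $f \in T$, a contradiction. Therefore $1 \in T$, that is $x \in M$, so $(F_i\cap M)_*$ is normal. The delicate points to get right are the existence of the conductor in the face monoid and the verification that it indeed makes $T$ cofinite; once those are in place, the numerical-semigroup step is the clean heart of the argument.
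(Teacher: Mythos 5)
Your proof is correct: the reduction of normality of $(F_i\cap M)_*$ to the statement ``every $x\in {\rm gp}(F_i\cap M)\cap \intt(F_i)$ lies in $M$'', the conductor element giving cofiniteness of $T=\{n : x^n\in M\}$, and the largest-gap argument with $2f,3f\in T$ all go through, the only imprecision being the parenthetical claim that the cone of $(F_i\cap M)_*$ is $F_i$ (the set of nonnegative combinations of its elements is really $\intt(F_i)\cup\{0\}$), which is harmless because the explicit characterization of normality you state afterwards is the one you actually use. Note that the paper offers no proof of this statement at all --- it is quoted from Proposition 2.40 of \cite{Gubeladze1-bookMR2508056} --- and your argument is essentially the standard one from that reference: face-wise localization, eventual containment of powers via a conductor for the affine monoid $F_i\cap M$, and the square-and-cube descent supplied by seminormality.
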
 
	Let $M \subseteq \pz{r}$ be an affine monoid. Corresponding to the cone $C (\subseteq \mathbb{R}^r)$ of $M,$ one defines the dual cone $C^*$ as
	\[C^* = \{\lambda \in \Hom(\mathbb{R}^r,\mathbb{R}) \mid \lambda(c) \geq 0 ~\forall c \in C\}. \]
	For $\alpha \in \intt(C^*)$ and $a>0$, define the hyperplane $\mathcal{H} = \{x\in \mathbb{R}^r \mid \alpha(x) = a\}$. For any point $m \in M,$ we may define $\phi(m)$ as the intersection of the ray in $\mathcal{H},$ {\it i.e.}, 
	\[\phi(m) = \mathbb{R}_+(m) \cap \mathcal{H}.\] 
	We define $\phi(M):=\mathcal{H} \cap C =\mathcal{H} \cap \mathbb{R}_+(M).$
	Then by (\cite{Gubeladze1-bookMR2508056}, Proposition 1.21), $\mathcal{H} \cap C$ is a polytope. An intuitive way of defining this polytope is to look for a hyperplane $\mathcal{H} \in \mathbb{R}^r$ such that $\mathbb{R}_+(\mathcal{H} \cap C) = C$. We may extrapolate this definition to define the polytope $\phi(N),$ for $N \subseteq M$. Given a polytope $Q \subseteq \phi(M),$ we may define an affine $M$-submonoid $`M(Q)$' by
	\[M(Q) = \{m \in M \smallsetminus \{0\} \mid \phi(m) \in Q \} \cup \{1\} \subseteq M. \]

	\begin{re} \tn{The cone $C$ carries with it a wealth of geometric properties and is also used to create a dictionary between the algebraic and the geometric theory of monoids. It can be seen that $M$ is positive if and only if $0$ is a vertex of $C$ (these cones are often called pointed cones in the literature). Interestingly, $M$ is $\phi$-simplicial if and only if $C$ is a simplicial cone. The properties of seminormality and normality too carry the same sort of characterization relating the said algebraic property to a geometric property of the cone $C$. For details, we refer \cite{Gubeladze1-bookMR2508056}. }
	\end{re}

	\begin{de}
		\tn{Let $M$ be an affine positive normal monoid of rank $r$. By $k(M)$ we denote the complexity of $M$, defined as the smallest integer corresponding to which there exists a $k(M)-1$-dimensional polytope $Q$ and a collection of elements $\{m_1, \ldots, m_{r-k(M)} \} \subseteq \phi(M)$ such that}
		\[\phi(M) = \conv(m_1, \ldots, m_{r-k(M)}, Q )\]
	\end{de}
	
	\noindent It is easy to check that $M$ is $\phi$-simplicial if and only if $k(M)=0$.
	
	\begin{de}
		{\rm For a normal monoid $M$, an element $m \in M$ is called an extremal generator of $M,$ if} 
	\end{de}
	\begin{enumerate}[label=$($\roman*$)$, noitemsep]
		\item $\phi(m)$ is a vertex of $\phi(M)$ and 
		\item $\pz{} \simeq \mbb{R}_+(m) \cap M.$
	\end{enumerate}

	\begin{re} \tn{The intuition behind extremal generators is to capture the vertices of the polytope $\phi(M)$ which will help us decompose our monoids into a more manageable form as in Definition \ref{pdec}. The following theorem is crucial to the inductive approach applied in proof of Theorem \ref{main}.}
	\end{re}
	
	The following is an algebraic variant of Definition 7.2 in \cite{Gubeladze-unimodularMR3853049}.
	
	\begin{de}
		An $R$-algebra $T \subseteq R[t_1, \ldots, t_r]=S$ is said to be $t_1$-tilted if for each monomial $m \in S,$ there exists $p_m \in \pz{},$ such that for $p \geq p_m,$ we have $t_1^pm \in T$. 
	\end{de}
	
	\begin{tr}\label{tac}$($\cite{Gubeladze-unimodularMR3853049}, Lemma 3.4 and Section 8.1$)$
		Let $M$ be an affine positive normal monoid with $\rank(M) \geq 2$ and $m \in M$ an extremal generator. Then there exists $r \in \pz{}$ and monoids $M_1,$ $M_2,$ containing $M$ such that the following is a cartesian diagram 
		\begin{figure}[H]
			\tikzset{ampersand replacement=\&}
			\begin{tikzcd}[row sep=1.5em, column sep = 1.5em]
				A_1 \arrow[rr] \arrow[dr] \arrow[dd] \&\&
				R[M] \arrow[dd, "\cap" near end ] \arrow[dr] \\
				\& A_2 \arrow[rr] \&\&
				R[M] \arrow[dd, "\cap"] \\
				R[\pz{r}] \arrow[rr,swap, "\hspace{0.5cm}\pi_1"] \arrow[dr,] \&\& M_1 \arrow[dr] \\
				\& \hspace{-4cm}R[t_1, t_1^{k_2}t_2, \ldots, t_1^{k_r}t_r] \simeq R[\pz{r}] \arrow[rr, "\pi_2"] \arrow[uu] \&\& M_2
			\end{tikzcd}
		\end{figure}
		for appropriate $k_2, \ldots, k_r \in \pz{}$ and $i=1,2$, where
		\begin{enumerate}[label=$($\roman*$)$, noitemsep]
			\item $A_i$'s are the respective patching and $A_2 \subseteq R[t_1, t_1^{k_2}t_2, \ldots, t_1^{k_r}t_r] \simeq R[t_1, \ldots,t_r]$ is a $t_1$-tilted algebra,
			\item The slanted arrows are induced ring embeddings, and
			\item The maps $\pi_i:R[\pz{r}] \rightarrow M_i$ are surjective with the property: $\pi_i\vert_R = Id_R$ and $\pi_i(t_1) = m$.
		\end{enumerate}
	\end{tr}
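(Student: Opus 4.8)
The plan is to read the stated diagram as the algebraic shadow of Gubeladze's pyramidal decomposition of the polytope $\phi(M)$ at the vertex $\phi(m)$: install coordinates adapted to the extremal generator, tilt the ambient orthant toward the ray of $m$, and then check that the resulting subalgebras sit in cartesian squares with the left-hand one $t_1$-tilted. Since $M$ is affine, positive and of rank $r$, it embeds as a submonoid $M \subseteq \pz{r}$, giving $R[M] \subseteq R[\pz{r}] = R[t_1, \ldots, t_r]$; because $m$ is an extremal generator, $\phi(m)$ is a vertex of $\phi(M)$ and $\mbb{R}_+(m) \cap M \cong \pz{}$, so I would choose this embedding so that $m$ corresponds to $t_1$, forcing $\pi_i(t_1) = m$ and $\pi_i|_R = Id_R$ as demanded in (iii).

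Next I would bend the standard orthant toward the ray of $m$: for exponents $k_2, \ldots, k_r \in \pz{}$ chosen large relative to the slopes of the generators of $M$ measured from $\phi(m)$, set $S' = R[t_1, t_1^{k_2}t_2, \ldots, t_1^{k_r}t_r]$, again a polynomial ring $\cong R[\pz{r}]$. Multiplying the $i$-th generator by $t_1^{k_i}$ tilts its ray toward the extremal direction, and for the $k_i$ large enough the whole cone of $M$ is absorbed into the tilted orthant. This is exactly the mechanism producing the $t_1$-tilted condition of (i): any monomial $\mu \in S'$ has $\phi(\mu)$ in the ambient cone, and for $p$ past some $p_\mu$ the point $\phi(t_1^p \mu)$ is pulled inside $\phi(M)$, whence $t_1^p \mu \in R[M] \subseteq A_2$.

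For the patching data I would use the decomposition $\phi(M) = \conv(\phi(m), Q)$ afforded by the extremal generator, letting $M_1, M_2 \supseteq M$ be the enlarged monoids attached to this splitting, $\pi_1 : R[\pz{r}] \twoheadrightarrow R[M_1]$ and $\pi_2 : S' \twoheadrightarrow R[M_2]$ the induced surjections, and the vertical maps marked ``$\cap$'' the inclusions $R[M] \hookrightarrow R[M_i]$. Taking $A_1 = \pi_1^{-1}(R[M]) \subseteq R[\pz{r}]$ and $A_2 = \pi_2^{-1}(R[M]) \subseteq S'$ exhibits each as a subalgebra that is simultaneously the fiber product over $R[M_i]$; that these squares are genuinely cartesian then follows from the support-intersection property of monoid algebras, $R[X] \cap R[Y] = R[X \cap Y]$, combined with the surjectivity of the $\pi_i$. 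The slanted arrows of (ii) are the induced ring embeddings coming from $M \subseteq M_i$ and from $R[M] \subseteq R[\pz{r}]$, $R[M] \subseteq S'$.

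The delicate point, and the real content of Gubeladze's Lemma 3.4, is to make the tilting of the second step and the patching of the third step \emph{compatible}: the exponents $k_2, \ldots, k_r$ must be large enough to absorb $M$ into the tilted orthant, so that $A_2$ is genuinely $t_1$-tilted, yet the enlarged monoids $M_1, M_2$ and the surjections $\pi_i$ must be cut out by the same data so that the cartesian squares still patch back to $R[M]$. Balancing ``enough tilt'' against ``the patching identity survives'' is where the precise polytopal estimates of the pyramidal decomposition are needed; once they are in place, checking (i)--(iii) is routine.
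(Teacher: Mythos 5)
First, a point of order: the paper contains no proof of Theorem~\ref{tac} at all --- it is imported verbatim from Gubeladze (\cite{Gubeladze-unimodularMR3853049}, Lemma 3.4 and Section 8.1) and used as a black box, so your attempt can only be measured against that construction, not against anything internal to this paper. Measured that way, the decisive gap is that you never construct $M_1$ and $M_2$. ``The enlarged monoids attached to this splitting'' is not a definition, and every clause of (i)--(iii) --- surjectivity of $\pi_i$, $\pi_i(t_1)=m$, $M\subseteq M_i$, and above all $t_1$-tiltedness of $A_2$ --- is a property of exactly those monoids. Setting $A_i=\pi_i^{-1}(R[M])$ and noting that preimages of subrings under surjections give cartesian squares is the routine part; the theorem \emph{is} the construction you postpone to ``the precise polytopal estimates,'' so what you have produced is the statement plus the formal shell of its verification.

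Second, the frame you do fix is inconsistent with condition (iii). You take $r=\rank(M)$ and an embedding $M\subseteq \pz{r}$, and you need the $\pi_i$ to be induced by monoid surjections $\pz{r}\twoheadrightarrow M_i$ (your cartesianness argument via $R[X]\cap R[Y]=R[X\cap Y]$ presupposes this). But then $\mathrm{gp}(\pi_i):\nz{r}\rightarrow \mathrm{gp}(M_i)$ is a surjection onto a torsion-free group of rank at least $r$ (since $M_i\supseteq M$), hence an isomorphism, hence $\pi_i$ is injective and $M_i\simeq \pz{r}$. Both squares then collapse to $A_i\simeq R[M]$, and $t_1$-tiltedness of $A_2$ becomes the assertion that $pe_1+\mu\in M$ for every $\mu\in\pz{r}$ and $p\gg 0$, i.e.\ that the cone of $M$ agrees with the orthant near the ray of $m$; equivalently, the vertex figure of $\mathbb{R}_+M$ along that ray must be simplicial, which fails e.g.\ for the rank-$4$ normal monoid on the cone over an octahedron. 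This is precisely why the theorem quantifies ``there exists $r$'': $r$ cannot be taken to be $\rank(M)$ in general. Two smaller but real errors compound this: your tilting heuristic is backwards (increasing $k_2,\ldots,k_r$ \emph{shrinks} the cone generated by $t_1,t_1^{k_2}t_2,\ldots,t_1^{k_r}t_r$ toward the ray of $t_1$, so it cannot ``absorb the whole cone of $M$''), and your verification of tiltedness via $t_1^p\mu\in R[M]\subseteq A_2$ misplaces $R[M]$, which in the front square is a subring of $R[M_2]$, not of $R[t_1,t_1^{k_2}t_2,\ldots,t_1^{k_r}t_r]$; what must actually be shown is $m^p\,\pi_2(\mu)\in R[M]$ for $p\gg 0$, and that absorption property is exactly what the unconstructed $M_2$ has to be engineered to satisfy.
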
 
	
	Corresponding to a polytope $P \in \mathbb{R}^r$ and point $m \in \mathbb{R}^r,$ we may construct a pyramid in $ \mathbb{R}^r$ given by $(P,m):=\conv(P,m),$ where $\conv(-)$ represents convex combinations of elements in the parenthesis. In this setup, $m$ will be called the apex of $(P,m)$ and $P$ its base. Using this special polytopal structure we decompose monoids in a way that suits us:
	
	\begin{de}\label{pdec} \tn{
			For a monoid $M$, a decomposition $M=M(\Delta) \cup M(\Gamma)$ is called a pyramidal decomposition with respect to the extremal generator $m \in M$, if }	
		\begin{enumerate}[label=$($\roman*$)$, noitemsep]
			\item $\phi(M) = \Delta \cup \Gamma$,
			\item $\Delta \cap \Gamma$ is a face of $\Gamma$ and
			\item $\Delta$ is the pyramid with apex $m$ given by $\Delta = (\Delta \cap \Gamma, v)$.
		\end{enumerate}
	\end{de}
	We are primarily interested in a non-trivial decomposition arising out of $\Delta$ and $\Gamma$ being equi-dimensional (non-degenerate decomposition). With respect to any degree, for $f \in R[M]$, let $\mathcal{H}(f) \in R[M]$ denote its highest degree (leading) term  and  $\mathcal{L}_R(f)$ the coefficient of $\mathcal{H}(f)$ in $R$. We may switch to $\mathcal{L}(f)$ if the context is clear. 
	
	\begin{de}\tn{
			Given this decomposition with respect to $`m$', choose the linear map $\delta$ vanishing on $H=\Gamma \cap \Delta,$ such that $\delta(H \cap \nz{r}) = \nz{}$, $\delta(\Gamma)\leq 0$ and $\delta(\Delta)\geq 0$. Set $\deg(x):=\delta(x),$ for $x \in M$. This leads to the definition of pyramidal degree on $R[M]:$ For $f = \sum r_im_i,$ we define
			$\deg(f) := max\{\delta(m_i)\}.$} 
	\end{de}
	
	An element $f \in R[M]$ is called monic in $m,$ if $\mathcal{H}(f)=um^c,$ for some unit $u \in R^{\times}$ and $c \in \mathbb{N}$. For details, we refer to \cite{Gubeladze1-bookMR2508056}, \cite{Gub-geo&al-MR1414169}, \cite{reid&robert-MR1813497} and \cite{Swan-andersonMR1144038}.
	
	\section{Prologue: Simplification of hypothesis}
	In this section we will show that to prove Theorem \ref{main}, it is sufficient to show
	$\frac{{\Um}(2n,R[M])}{{\E}(2n,R[M])} = \{*\}$, when $R$ is local and $M \in \mathcal{N},$ where $\mathcal{N}$ is as below. This is done primarily by observing unimodular rows through patching diagrams.
	We establish this in two steps, $i.e.$, via Proposition \ref{ttsoh1} and Theorem \ref{ttsoh2}. Consider the following notations:
	\begin{align*}
		\mathcal{S} &:= \text{ the set of all seminormal monoids},\\
		\mathcal{N} &:= \text{ the set of all affine positive and normal monoids}.
	\end{align*}

	\begin{lm}\label{susl} $(${\it cf.}\cite{Sus-K-MR0469914}$)$
		Let R be a ring, $s \in R$ a non-zerodivisor and $\sigma \in {\G}(2n,R_s)$ and $\delta_{ij}(X) = \sigma ge_{ij}(Xf)\sigma^{-1},$ for $i>1$ and $f \in R_s[X]$. Then there exists $t \in \mathbb{N}$ and $\tau(X) \in {\E}(2n,R[X],XR[X])$ such that 
		\[\tau(X)_h = \delta_{ij}(Xs^t) = {\sigma}se_{ij}(Xs^t){\sigma}^{-1}.\]
	\end{lm}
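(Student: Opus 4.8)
The statement to prove is Lemma \ref{susl}, an analogue of a well-known lemma of Suslin (the ``Suslin local-global patching trick'') adapted to the classical groups $\G(2n,-)$ and their elementary subgroups. The goal is, starting from a fixed $\sigma \in \G(2n,R_s)$ and the conjugated elementary generator $\delta_{ij}(X) = \sigma\, ge_{ij}(Xf)\,\sigma^{-1}$ (a matrix over $R_s[X]$ that specializes to the identity at $X=0$), to produce a \emph{genuine} relative elementary matrix $\tau(X) \in \E(2n,R[X],XR[X])$ over the \emph{un-localized} ring $R[X]$ whose image after localizing at $s$ (i.e. $\tau(X)_h$, the pushforward along $R[X]\to R_s[X]$) equals $\delta_{ij}(Xs^t)$ for a suitably large power $s^t$.

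The approach I would take proceeds in three steps. First I would recall the standard identity showing that a conjugate of a single elementary generator by an arbitrary group element, when the parameter is made divisible by a high power of $s$, descends to an elementary matrix over the subring; concretely, for the linear case Suslin's observation is that $\varepsilon e_{ij}(X f)\varepsilon^{-1}$ with entries in $R_s$ becomes, after rescaling $X \mapsto X s^t$, a product of elementary generators with entries in $R[X]$ that vanish at $X=0$. Here I must carry out the same computation but respect the \emph{symplectic/orthogonal} relations: the generators $ge_{ij}(\lambda)$ come in the linked pairs dictated by $\sigma=(1,2)(3,4)\cdots$, so conjugation and the clearing-of-denominators step must be done so that the output genuinely lies in $\E(2n,R[X],XR[X])$ rather than merely in $\GL_{2n}$. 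The second step is to verify that the matrix so produced satisfies the form equation ($\alpha^T\phi_n\alpha=\phi_n$ or the orthogonal analogue) identically in $X$, so that it is a legitimate element of $\G(2n,R[X])$ and, being built from the $ge_{ij}$, of the elementary subgroup. The third step is the bookkeeping: choosing $t$ large enough (depending on $\sigma$, $f$, and the fixed $s$) so that every denominator appearing when the conjugation is expanded is cleared, and confirming that the localization map sends the constructed $\tau(X)$ back to $\delta_{ij}(Xs^t)$ on the nose.

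The main obstacle I expect is precisely the interplay between the denominator-clearing and the classical-group structure. In the linear case one has full freedom among all $e_{ij}(\lambda)$, but in the symplectic and orthogonal settings the elementary generators are constrained (for orthogonal one even lacks generators when $i=\sigma(j)$, as noted in the text), and a conjugate $\sigma\, ge_{ij}(\cdot)\,\sigma^{-1}$ need not factor through the \emph{short} list of generators in an obvious way. The delicate point is to show that after multiplying the parameter by $s^t$, the resulting matrix still factors as a product of $ge_{ab}(\mu)$'s with $\mu \in X s^{t'}R[X]$, using only the permitted generators, so that it lies in the relative group $\E(2n,R[X],XR[X])$. I would handle this by writing $\sigma$ as a matrix with entries in $R_s$, expanding the conjugate entry-wise, and observing that each off-diagonal entry of $\delta_{ij}(X)-\mathrm{Id}$ is a polynomial in $X$ with coefficients in $R_s$; choosing $t$ past the largest power of $s$ in any denominator makes every such coefficient land in $R$, and the form-preservation identity then guarantees the linked entries are produced simultaneously and consistently.

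Finally, I would remark that this lemma is the $\G/\E$-version of the tool that feeds the local-global principle; once it is in hand, it is used (as in Suslin and in \cite{MR1958377}) to transfer elementary actions between $R_s$ and $R$ and thereby to run the Quillen-style patching argument underlying Theorem \ref{main}. For the write-up I would state the exact power $t$ and the explicit factorization in the symplectic case, and then indicate that the orthogonal case follows \emph{mutatis mutandis} using $oe_{ij}$ in place of $se_{ij}$ and keeping track of the isotropy condition $\langle v,v\rangle=0$.
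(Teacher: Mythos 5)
First, a point of comparison: the paper offers no proof of this lemma at all --- it is stated with a citation to Suslin--Kopeiko \cite{Sus-K-MR0469914} and used as a black box --- so your attempt can only be measured against the standard proof in that literature. Against that standard, your plan has a genuine gap at exactly the step that carries all the content. Your resolution of the ``delicate point'' is to expand $\delta_{ij}(X)-{\rm Id}$ entry-wise, choose $t$ past the largest denominator so that all coefficients land in $R$, and let form-preservation ``guarantee the linked entries are produced simultaneously and consistently.'' That argument proves only that $\delta_{ij}(Xs^t)$ is a form-preserving matrix over $R[X]$ congruent to ${\rm Id}$ modulo $X$, i.e.\ that it defines an element of the relative classical group ${\G}(2n,R[X],XR[X])$. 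The lemma asserts membership in the relative \emph{elementary} subgroup ${\E}(2n,R[X],XR[X])$, which is strictly stronger and does not follow: the quotient of the relative classical group by the relative elementary subgroup is in general nontrivial (it is a relative $K_1$-type invariant; already for linear groups a nontrivial $NK_1$ obstructs precisely this inference for non-regular $R$). Your second step has the same circularity: you justify elementarity by saying the matrix is ``built from the $ge_{ij}$,'' but producing such a factorization over $R[X]$ --- rather than over $R_s[X]$, where it is given --- is the whole problem. So the key conclusion $\tau(X)\in{\E}(2n,R[X],XR[X])$ is never reached.

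The missing mechanism is one of the following two. (a) If $\sigma$ is a product of elementary generators --- which is all the paper needs, since in its only application (Proposition \ref{vor}) the conjugating matrices $\sigma_k$ are partial products of generators --- one inducts on the word length of $\sigma$ using the commutator relations among the $ge_{kl}$'s: conjugating $ge_{ij}(\lambda)$ by a single generator is again a product of generators whose parameters are multiples of $\lambda$, so denominators grow in a controlled way and one dilation $X\mapsto Xs^t$ clears them all, leaving a genuine word in the generators with parameters in $XR[X]$. (b) For arbitrary $\sigma\in{\G}(2n,R_s)$ as in the statement, one exploits the form itself: from $\sigma^T\phi_n\sigma=\phi_n$ one gets $e_j^T\sigma^{-1}=\pm(\sigma e_{\sigma(j)})^T\phi_n$, so that $\sigma\, ge_{ij}(\lambda)\,\sigma^{-1}={\rm Id}+\lambda\,u\,(w^T\phi_n)\mp\lambda\,w\,(u^T\phi_n)$ (up to signs dictated by the parities of $i,j$; with $\psi_n$ in the orthogonal case), where $u=\sigma e_i$ and $w=\sigma e_{\sigma(j)}$ are columns of $\sigma$ satisfying $\langle u,w\rangle=0$ with $u$ unimodular (and isotropic in the orthogonal case). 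This exhibits the conjugate as an Eichler--Siegel--Dickson transvection, and one then invokes the classical-group analogue of Suslin's factorization lemma for such transvections in its dilated relative form --- this is the route of \cite{Sus-K-MR0469914}. Your outline correctly recalls the linear-case statement of the dilation phenomenon, but supplies neither mechanism in the symplectic/orthogonal setting, so as written the proof would fail at its central claim.
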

	

	The proposition below and the consequent corollary demonstrate that if given appropriate patching diagrams (as in the proofs of Proposition \ref{ttsoh1},  Theorem \ref{ttsoh2} and Lemma \ref{karoubi}) the burden of transitive action can be shifted from one corner of the diagram to another.
	
	The following proposition can be seen as the classical analogue  of Vorst's decomposition. ({\it cf.}\cite{Gub-geo&al-MR1414169}, Lemma 6.1). For the definitions of cartesian diagram and Karoubi square refer to (\cite{Gubeladze1-bookMR2508056}, Section 8.C) .
	
	\begin{pr}\label{vor}
		
		\noindent Consider the commutative diagram $(D)$ below:
		\begin{figure}[H]
			\[
			\begin{tikzcd}[row sep=2em, column sep=2.5em]
				A \arrow[rr,"f_1"] \arrow[dd,swap,"f_2"] &&
				A_1 \arrow[dd,"g_1"] \\
				&  \\
				A_2 \arrow[rr,swap, "g_2"] && A’ 
			\end{tikzcd}
			\]
			\vspace{-0.4cm}
		\end{figure}
		\noindent If the diagram is either a Karoubi square, or cartesian with either $g_1$ or $g_2$ surjective, then any $\alpha \in {\E}(2n,A')$ can be decomposed as $\alpha = g_1(\alpha_1)g_2(\alpha_2),$ where $\alpha_i \in {\E}(2n,A_i)$ for $i=1, 2$.
	\end{pr}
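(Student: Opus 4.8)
The statement asserts that any elementary matrix $\alpha \in {\E}(2n,A')$ over the bottom-right corner can be factored as $\alpha = g_1(\alpha_1)g_2(\alpha_2)$ with $\alpha_i \in {\E}(2n,A_i)$. Since ${\E}(2n,A')$ is generated by the elementary generators $ge_{ij}(\lambda)$ with $\lambda \in A'$, it suffices to realize each such generator in the desired factored form, and then assemble a general $\alpha$ as an ordered product of factored pieces. So the proof reduces to a single lifting question: given $ge_{ij}(\lambda)$ with $\lambda \in A'$, produce $\alpha_1 \in {\E}(2n,A_1)$ and $\alpha_2 \in {\E}(2n,A_2)$ with $g_1(\alpha_1)g_2(\alpha_2) = ge_{ij}(\lambda)$.

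Let me verify the elementary generators are closed under multiplication enough to assemble the product. The first step is to split the parameter $\lambda \in A'$. In the cartesian case with (say) $g_2$ surjective, I can lift $\lambda$ to some $\mu \in A_2$ with $g_2(\mu) = \lambda$; then $\alpha_2 := ge_{ij}(\mu) \in {\E}(2n,A_2)$ satisfies $g_2(\alpha_2) = ge_{ij}(\lambda)$, and I simply take $\alpha_1 = {\rm Id}_{2n}$. Thus in the surjective cartesian case each generator lifts trivially through the surjection, and the factorization of a product $\alpha = \prod_s ge_{i_sj_s}(\lambda_s)$ is obtained by lifting each factor and setting $\alpha_1 = {\rm Id}_{2n}$, $\alpha_2 = \prod_s ge_{i_sj_s}(\mu_s)$. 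The symmetric argument handles $g_1$ surjective.

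The genuinely substantive case is the Karoubi square, where neither $g_1$ nor $g_2$ need be surjective but the defining property of a Karoubi square gives a controlled additive splitting: every $\lambda \in A'$ can be written as $\lambda = g_1(\lambda_1) + g_2(\lambda_2)$ for suitable $\lambda_i \in A_i$ (this is exactly the feature that distinguishes a Karoubi square from a bare pullback, see the reference to \cite{Gubeladze1-bookMR2508056}, Section 8.C). Here I would exploit the additivity of the elementary generators in their parameter: for a fixed pair $(i,j)$ one has $ge_{ij}(x)ge_{ij}(y) = ge_{ij}(x+y)$, so $ge_{ij}(\lambda) = ge_{ij}(g_1(\lambda_1))\,ge_{ij}(g_2(\lambda_2)) = g_1(ge_{ij}(\lambda_1))\,g_2(ge_{ij}(\lambda_2))$, which is already of the required form with $\alpha_1 = ge_{ij}(\lambda_1)$ and $\alpha_2 = ge_{ij}(\lambda_2)$.

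The main obstacle is passing from this generator-by-generator factorization to a global factorization of an arbitrary product $\alpha = \prod_s ge_{i_sj_s}(\lambda_s)$, because the naive substitution interleaves $A_1$-factors and $A_2$-factors and the two families do not commute. To untangle this I would argue inductively on the number of generators, using the commutator relations among the $ge_{ij}(\lambda)$'s to move all $g_1$-factors to the left past all $g_2$-factors; the relevant commutators $[ge_{ij}(x),ge_{kl}(y)]$ are again elementary generators (or products thereof) whose parameters are built from $x$ and $y$, and since $x \in g_1(A_1)$ and $y \in g_2(A_2)$ the products $xy$ that arise lie in the image of both maps, hence in $g_1(A_1)\cap g_2(A_2)$, so each such correction term can itself be absorbed into either $\alpha_1$ or $\alpha_2$. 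Carrying out this commutator bookkeeping carefully — keeping track of which correction terms land in which corner and verifying they remain genuinely elementary (not merely in the relative subgroup) — is where the real work lies, and it is the step I would write out in full detail.
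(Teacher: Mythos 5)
Your cartesian case is fine and matches the paper, and your additive splitting plus additivity of $ge_{ij}(-)$ for a \emph{single} generator in the Karoubi case is also how the paper begins. The gap is in your untangling step, and the mechanism you propose for it is wrong, not merely unfinished. You claim that when a $g_1$-factor is pushed past a $g_2$-factor, the commutator corrections have parameters (of the form $\pm xy$, by the Steinberg-type relations) lying in $g_1(A_1)\cap g_2(A_2)$ because $x\in g_1(A_1)$ and $y\in g_2(A_2)$. That is false: the product of an element in the image of $g_1$ with an element in the image of $g_2$ generally lies in \emph{neither} image. Concretely, in the Karoubi square of the paper one has $A_2=S^{-1}A$, $A'=(A_1)_{s'}$ with $s'=f_1(s)$, $g_1$ the localization map and $g_2$ induced by $f_1$; for $x=\lambda_1\in A_1$ and $y=f_1(v)/(s')^{z}$ the product $xy=\lambda_1 f_1(v)/(s')^{z}$ has a denominator (so it is not in $g_1(A_1)$) and a numerator that need not come from $A$ (so it is not in $g_2(A_2)$). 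Consequently your induction never closes up: each pass produces correction parameters that are again arbitrary elements of $A'$, which is exactly where you started.

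What the paper does instead, and what is missing from your proposal, is two things. First, the additive splitting is taken with divisibility control: using the analytic isomorphism $A/s^tA\simeq A_1/(s')^tA_1$ defining the Karoubi square, each $\lambda_k$ is written as $a_k+b_k$ with $b_k\in g_2(A_2)$ and $a_k=u_k(s')^{t}$ divisible by an arbitrarily high power of $s'$; your splitting $\lambda=g_1(\lambda_1)+g_2(\lambda_2)$ carries no such control. Second, instead of commutator bookkeeping, the paper inserts telescoping factors $ge_{i_kj_k}(-b_k)$ to rewrite $\alpha=\prod_{k=m}^{1}\sigma_k\, ge_{i_kj_k}(a_k)\,\sigma_k^{-1}\cdot\prod_{k=1}^{m} ge_{i_kj_k}(b_k)$, where $\sigma_k$ is the partial product of the original generators; all the interleaving is thus concentrated into conjugates $\sigma_k\, ge_{i_kj_k}(a_k)\,\sigma_k^{-1}$ with $\sigma_k\in{\E}(2n,A')$. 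These conjugates are then lifted to ${\E}(2n,A_1)$ not because their parameters lie in $g_1(A_1)$ --- the conjugators do not --- but by Suslin's Lemma \ref{susl} together with Lemma \ref{noeth}: a conjugate $\sigma\, ge_{ij}(X(s')^{t})\,\sigma^{-1}$ whose parameter is divisible by a sufficiently high power of $s'$ is the localization of a matrix $\tau(X)\in{\E}(2n,A_1[X],XA_1[X])$. This lemma is the engine of the whole proof, and nothing in your argument plays its role; repairing your proof amounts to replacing the commutator-pushing by precisely this conjugation-plus-Suslin mechanism.
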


	\begin{proof}
		
		If the diagram (D) is cartesian with say $g_1$ surjective, then we may choose $\alpha_1$ as a lift of $\alpha$. Choose $\alpha_2={\rm Id}_{2n} \in {\E}(2n, {A_2})$ and therefore we have the required decomposition. If (D) is a Karoubi square, then assume
		\[ \alpha = \smashoperator[r]{\prod_{k=1}^{m}}ge_{{i_k}{j_k}}(\lambda_k),\]
		where $ge_{{i_k}{j_k}}(\lambda_k) \in {\E}(2n,A')$ are respective elementary generators, $\lambda_k \in A'$ and $A_2 = S^{-1}A$. Without loss of generality, one may assume that $\lambda_k \in (A_1)_{f_1(s)},$ for $s \in S$. Fix $s' = f_1(s)$ and $\sigma_{k} = \prod_{1 \leq r \leq k-1} ge_{{i_r}{j_r}}(\lambda_r)$ for $1 \leq r \leq m$. From Lemma \ref{susl} corresponding to $\sigma_{k}$ and $s',$ there exists $t_k \in \mathbb{N}$ and $\tau_k(X) \in {\E}(2n,A_1[X], XA_1[X])$ such that
		\[\tau_k(X)_{s'} = {\sigma_k}ge_{{i_k}{j_k}}(X(s')^{t_k}){\sigma_k}^{-1}.\]
		For $1 \leq k \leq m,$ let $t = {\rm gcd}\{t_k\}$ and $t_k = td_k$. Since $f_1$ is an analytic isomorphism, we have the isomorphism $A/s^t \simeq A_1/(s')^t$. Therefore, for $1 \leq k \leq m,$ we may choose $u_k \in A_1,$ $v_k \in A$ and $z_k \in \pz{}$ such that
		\[ \lambda_k = u_k(s')^{t} + \frac{f_1(v_k)}{(s')^{z_k}} \in {(A_1)}_{s'}. \]
		
		\noindent Let $a_k = u_ks'^t$ and $b_k = f_1(v_k)/s'^{z_k}$ for $1 \leq k \leq m$. One gets:
		\begin{align*}
			\alpha &=  \sigma_m{ge_{{i_m}{j_m}}(\lambda_m)} \\
			&= \sigma_m{ge_{{i_m}{j_m}}(a_m)}{ge_{{i_m}{j_m}}(b_m)}\\
			& =\bigg(\sigma_m{ge_{{i_m}{j_m}}(a_m)}\smashoperator[r]{\prod_{ k = m-1}^{1}}{ge_{{i_k}{j_k}}(-b_k)}\bigg)\bigg(\smashoperator[r]{\prod_{i=1}^{m}}{ge_{{i_k}{j_k}}(b_k)}\bigg) \\
			& =\bigg(\sigma_m{ge_{{i_m}{j_m}}(a_m)}\smashoperator[r]{\prod_{ k = m-1}^{1}}{ge_{{i_k}{j_k}}(-\lambda_k)}{ge_{{i_k}{j_k}}(a_k)}\bigg)\bigg(\smashoperator[r]{\prod_{i=1}^{m}}{ge_{{i_k}{j_k}}(b_k)}\bigg) \\
			&= \smashoperator[r]{\prod_{k=m}^{1}}{\sigma_k}ge_{{i_k}{j_k}}(a_k) {\sigma_k}^{-1}\smashoperator[r]{\prod_{k=1}^{m}}{ge_{{i_k}{j_k}}(b_k)}.
		\end{align*}
		Let $u'_k = (s')^{d}u_k$, where $d \gg \{z_k\}$ for all $k$. Hence by Lemma \ref{noeth} one gets the following desired decomposition: 
		\[\alpha = \smashoperator[r]{\prod_{k=m}^{1}}\tau_k(u'_k)\smashoperator[r]{\prod_{k=1}^{m}}{ge_{{i_k}{j_k}}(b_k)}. \qedhere\] 
	\end{proof}
	
	\begin{co}\label{cmain} Let the diagram $(D)$ be as in Proposition \ref{vor}. If $u \in {\Um}(2n,A_1)$ is such that $g_1(u) \sim e_1,$ then there exists $v \in {\Um}(2n,A)$ such that $f_1(v) \sim u$. 
	\end{co}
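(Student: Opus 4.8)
The plan is to push the hypothesis $g_1(u)\sim e_1$ through the decomposition supplied by Proposition \ref{vor} and then reconstruct $v$ by patching across the square. Recalling that the action is on the right, the relation $g_1(u)\sim e_1$ over $A'$ means there is an element $\beta\in{\E}(2n,A')$ with $g_1(u)\beta=e_1$. Applying Proposition \ref{vor} to $\beta$ (legitimate in either admissible case for $(D)$) produces $\alpha_1\in{\E}(2n,A_1)$ and $\alpha_2\in{\E}(2n,A_2)$ with $\beta=g_1(\alpha_1)g_2(\alpha_2)$.

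Next I would substitute and rearrange. Since $g_1,g_2$ are ring homomorphisms applied entrywise (hence commuting with the elementary action), and since $e_1$ is the common image of the standard row, the identity $g_1(u)g_1(\alpha_1)g_2(\alpha_2)=e_1$ rewrites as
\[ g_1(u\alpha_1)=e_1\,g_2(\alpha_2)^{-1}=g_2(e_1\alpha_2^{-1}). \]
Set $u':=u\alpha_1\in{\Um}(2n,A_1)$ and $w:=e_1\alpha_2^{-1}\in{\Um}(2n,A_2)$; both are unimodular because the elementary groups act on unimodular rows. By construction $u'\sim u$ over $A_1$, and the displayed equation says precisely that $u'$ and $w$ agree over $A'$, i.e. $g_1(u')=g_2(w)$.

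Now I would invoke the patching property of $(D)$. In both admissible situations — a Karoubi square, or a cartesian square with $g_1$ or $g_2$ surjective — the square is cartesian, so $A^{2n}=A_1^{2n}\times_{A'^{2n}}A_2^{2n}$, and the agreeing pair $(u',w)$ determines a unique $v\in A^{2n}$ with $f_1(v)=u'$ and $f_2(v)=w$. Then $f_1(v)=u'=u\alpha_1\sim u$ over $A_1$, which is exactly the desired conclusion, provided $v$ is unimodular over $A$.

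The main obstacle is this final point: that unimodularity descends, i.e. that $v$ is unimodular over $A$ once its images $u'$ and $w$ are unimodular over $A_1$ and $A_2$. This is Milnor patching of unimodular rows. I would carry it out by choosing sections $s'\in A_1^{2n}$ and $t'\in A_2^{2n}$ with $\langle u',s'\rangle=1$ and $\langle w,t'\rangle=1$; over $A'$ both $g_1(s')$ and $g_2(t')$ are sections of the common row $\bar v:=g_1(u')=g_2(w)$, and I would correct their discrepancy through the surjective leg of the square (or the analytic-isomorphism property, in the Karoubi case) to obtain matching local sections that glue to a global $s\in A^{2n}$ with $\langle v,s\rangle=1$. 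Granting this patching, $v\in{\Um}(2n,A)$ and $f_1(v)\sim u$, completing the proof.
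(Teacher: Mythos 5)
Your proof is correct and takes essentially the same route as the paper's: decompose the elementary matrix witnessing $g_1(u)\sim e_1$ via Proposition \ref{vor}, form the matching pair $u\alpha_1$ over $A_1$ and $e_1\alpha_2^{-1}$ over $A_2$, and patch across the square to obtain $v$ with $f_1(v)=u\alpha_1\sim u$. You are in fact slightly more careful than the paper, which writes $u_2=e_1\alpha_2$ where $e_1\alpha_2^{-1}$ is what makes the two rows agree over $A'$, and you spell out the Milnor-patching argument for unimodularity that the paper leaves implicit.
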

	
	\begin{proof}
		Let $\alpha \in {\E}(2n,A')$ be such that $g_1(u)\alpha = e_1$. Using the result above we may find $\alpha_i \in {\E}(2n,A_i)$ for $i =1, 2$ such that $\alpha = g_1(\alpha_1)g_2(\alpha_2)$. Let $u_1 = u\alpha_1 \in {\Um}(2n,A_1)$ and $u_2 = e_1\alpha_2 \in {\Um}(2n,A_2)$. Then using the patching diagram (D) we get a unimodular element $v \in {\Um}(2n,A)$. The proof may be concluded by observing $f_1(v) = u_1 = u\alpha_1$. 
	\end{proof}
	
	Owing to Corollary \ref{cmain}, we may replicate the proof of the following proposition as done in Lemma (\cite{Gubeladze-unimodularMR3853049}, Lemma 6.1). 
	
	\begin{pr}\label{ttsoh1}
		Let R be a ring and $2n \geq \mathbb{D}(R)$.  Assume $\frac{{\Um}(2n,R[N_*])}{{\E}(2n,R[N_*])} = \{*\}$ for each $N \in \mathcal{N}$. Then  $\frac{{\Um}(2n,R[M])}{{\E}(2n,R[M])} = \{*\},$ for each $M \in \mathcal{S}$.
	\end{pr}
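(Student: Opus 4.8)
The plan is to run a single downward induction along the face poset of the cone $C=\mathbb{R}_+M$, gluing at each stage through the patching transfer of Corollary \ref{cmain}. Before that I would carry out two routine reductions. Since $\Um(2n,-)$ and $\E(2n,-)$ commute with filtered colimits of rings and every monoid is the filtered union of its finitely generated submonoids, replacing each such submonoid by its seminormalization inside ${\rm gp}(M)$ reduces the statement to $M$ affine and seminormal. A second reduction strips the unit group ${\rm U}(M)$: it contributes only Laurent variables, and by Proposition \ref{p3} these do not affect the bound $\MD(R)$, so one may assume $M$ affine, positive and seminormal. I then induct on $r=\rank(M)$; the cases $r\le 1$ give $R[M]=R$ or $R[M]=R[\pz{}]=R[X]$, both settled by Proposition \ref{p3} since $2n\ge\MD(R)$.

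For the inductive step the engine is the following observation. As $M$ is positive, $\intt(M)$ is an ideal of $M$ contained in $M_*=\intt(M)\cup\{1\}$, so the monomial ideal $J=R[\intt(M)]$ is a common ideal of $R[M_*]\subseteq R[M]$. This yields a cartesian square with corners $A=R[M_*]$, $A_1=R[M]$, $A_2=R[M_*]/J=R$ and $A'=R[M]/J=R[\partial M]$, in which $g_1\colon R[M]\surj R[\partial M]$ is surjective, so Proposition \ref{vor} and hence Corollary \ref{cmain} apply. The decisive point, and the reason seminormality is exactly the right hypothesis, is that by Proposition \ref{ts} the interior monoid $M_*=(C\cap M)_*$ is normal; being affine and positive it equals $N_*$ for $N=M_*\in\mathcal{N}$, so the assumption gives $\Um(2n,R[M_*])/\E(2n,R[M_*])=\{*\}$. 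Granting for the moment that $\Um(2n,R[\partial M])/\E(2n,R[\partial M])=\{*\}$, any $u\in\Um(2n,R[M])$ satisfies $g_1(u)\sim e_1$, whence Corollary \ref{cmain} produces $v\in\Um(2n,R[M_*])$ with $f_1(v)\sim u$; as $v\sim e_1$ over $R[M_*]$ and elementary equivalences push forward along $f_1$, we conclude $u\sim e_1$.

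It therefore remains to trivialise $\Um(2n,R[\partial M])$, and this is where the real work and the main obstacle lie. The boundary $\partial M=M\smallsetminus\intt(M)$ is not a monoid but the union of the facet monoids glued along lower faces, so $R[\partial M]$ is not itself a monoid ring and is reached only by iterating the construction above. I would peel the relative interiors of the faces in order of decreasing dimension: for a top face $F$ of the current subcomplex, $\intt(F\cap M)$ is an ideal and $(F\cap M)_*$ a submonoid, giving another cartesian square whose pullback corner is $R[(F\cap M)_*]$ and whose remaining corner is the ring of the subcomplex with the open cell $\intt(F\cap M)$ deleted. Proposition \ref{ts} again guarantees that every such $(F\cap M)_*$ is normal, positive and affine, hence of the form $N_*$ with $N\in\mathcal{N}$, so the hypothesis trivialises each pullback corner; Corollary \ref{cmain} then transfers triviality from the corner and the smaller subcomplex to the larger one, the induction bottoming out at the vertices where the complex ring is just $R$ (Proposition \ref{p3}). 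The principal difficulty is organising this face-poset induction cleanly---checking at each step that the peeled ideal is genuinely a common ideal making the square cartesian with $g_1$ surjective, and tracking that the subcomplex rings form a well-founded descending chain---which is precisely the content reproduced from (\cite{Gubeladze-unimodularMR3853049}, Lemma 6.1).
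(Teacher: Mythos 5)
Your overall architecture coincides with the paper's own proof: reduce by filtered colimits to $M$ affine and seminormal, then peel off the relative interiors of the faces of the cone one at a time, at each stage forming a cartesian (Milnor) square whose pullback corner is $R[(F\cap M)_*]$ --- normal by Proposition \ref{ts}, positive and affine by a Gordan-type argument, hence of the form $R[N_*]$ with $N\in\mathcal{N}$ and trivialized by the hypothesis --- and transferring triviality across the square by Corollary \ref{cmain}, bottoming out at $R$ via Proposition \ref{p3}. This is exactly the paper's chain of squares with corners $R[U_j]$, $A_{j-1}$, $R$, $A_j$, where $A_j=R[N]/RW_j$ and $U_j=(F_j\cap N)_*$.

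The one genuine flaw is your second reduction, the stripping of units. You assert that ${\rm U}(M)$ ``contributes only Laurent variables'', i.e.\ that $M\cong N\oplus{\rm U}(M)$ with $N=(M\smallsetminus{\rm U}(M))\cup\{1\}$, so that $R[M]$ is a Laurent extension of a positive seminormal monoid ring. That splitting holds for affine \emph{normal} monoids, but it can fail for seminormal ones: the obstruction is torsion in ${\rm gp}(M)/{\rm U}(M)$. For instance, $M=\{(x,y)\in\nz{2}\mid y\geq 1\}\cup\mathbb{Z}(2,0)$ is affine (generated by $(\pm 2,0)$, $(0,1)$, $(1,1)$), seminormal, with ${\rm U}(M)=\mathbb{Z}(2,0)$, yet ${\rm gp}(M)/{\rm U}(M)\cong\mathbb{Z}\oplus\mathbb{Z}/2\mathbb{Z}$, so ${\rm U}(M)$ is not even a direct summand of ${\rm gp}(M)=\nz{2}$ and no monoid splitting exists; indeed here ${\rm gp}(N)=\nz{2}$ already, so $R[N][Y^{\pm 1}]$ has the wrong Krull dimension to be $R[M]$. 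The repair is precisely the move the paper makes, using only tools you already wield: $M\smallsetminus{\rm U}(M)$ is a monoid ideal, so one gets a cartesian square with corners $R[N]$, $R[M]$, $R$, $R[{\rm U}(M)]$ in which $g_1\colon R[M]\surj R[{\rm U}(M)]$ is the surjection killing the non-unit monomials; since ${\rm U}(M)\cong\nz{s}$, Proposition \ref{p3} gives ${\Um}(2n,R[{\rm U}(M)])/{\E}(2n,R[{\rm U}(M)])=\{*\}$, and Corollary \ref{cmain} then shifts the burden from $R[M]$ to $R[N]$ with $N$ positive, affine and seminormal, after which your face-peeling induction runs verbatim.
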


	The reader may also note that the proof of the above theorem demonstrates that to prove transitive action in $R[M]$ it is sufficient to prove transitive action for interior monoid algebra $R[N_*]$ where $N$ corresponds to the faces of $\phi(M)$. (Just like proof of Lemma 6.1(b) of \cite{Gubeladze-unimodularMR3853049}). The next step further relaxes the condition of seminormality of monoids. Let $J(R)$ denote the Jacobson radical of $R$.
	
	\begin{lm}\label{nilr}
		Let $I \subseteq J(R)$ be an ideal of R and $n \geq 2$.
		If $\frac{{\Um}(2n,R/I)}{{\E}(2n,R/I)} = \{*\}$, then $\frac{{\Um}(2n,R)}{{\E}(2n,R)} = \{*\}$. 
	\end{lm}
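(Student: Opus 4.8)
The proof is a reduction-modulo-the-radical argument: the plan is to lift the assumed triviality of the orbit space over $R/I$ up to $R$, the one genuine point being that the final clearing of a row has to be carried out inside the form-preserving elementary group. First I would take an arbitrary $v\in\Um(2n,R)$ and reduce it modulo $I$. The hypothesis provides $\bar\varepsilon\in\E(2n,R/I)$ with $\bar v\,\bar\varepsilon=e_1$. Since each generator $ge_{ij}(\lambda)$ of $\E(2n,R/I)$ is the image of $ge_{ij}(\tilde\lambda)$ for any lift $\tilde\lambda\in R$ of $\lambda$, the map $\E(2n,R)\to\E(2n,R/I)$ is surjective; so I pick a lift $\varepsilon\in\E(2n,R)$ of $\bar\varepsilon$ and put $w:=v\varepsilon\in\Um(2n,R)$. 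By construction $w\equiv e_1\pmod I$, and the lemma is thereby reduced to showing that every $w\in\Um(2n,R)$ with $w\equiv e_1\pmod I$ satisfies $w\sim e_1$.

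Because $I\subseteq J(R)$, the coordinate $w_1\in 1+I$ is a unit while $w_2,\dots,w_{2n}\in J(R)$, so I would use $w_1$ as a pivot. For $j\notin\{1,\sigma(1)\}$ the generator $ge_{1j}(\lambda)$ acts on the right by adding $\lambda w_1$ to coordinate $j$ (plus a multiple of $w_{\sigma(j)}$ to coordinate $\sigma(1)$), hence the choice $\lambda=-w_j w_1^{-1}$ annihilates coordinate $j$ while perturbing coordinate $\sigma(1)$ only by an element of $I$. Sweeping over all such $j$ clears every coordinate except $1$ and $\sigma(1)$. In the symplectic case coordinate $\sigma(1)$ is then killed by the remaining generator $ge_{1,\sigma(1)}$. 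In the orthogonal case no elementary generator couples $1$ with $\sigma(1)$, so I would first plant a unit in a spare coordinate via $ge_{1j}$ and use it as an auxiliary pivot to clear coordinate $\sigma(1)$ before cleaning up the spare coordinate again; this is where the hypothesis $n\ge 2$ (and thus the availability of a third coordinate) enters. Either way $w$ is reduced to $(u,0,\dots,0)$ with $u$ a unit, the side-effects staying in $I$ throughout.

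Finally I would normalize the unit. Writing $(u,0,\dots,0)=e_1\cdot \mathrm{diag}(u,u^{-1},1,\dots,1)$ and noting that $\mathrm{diag}(u,u^{-1},1,\dots,1)$ preserves the relevant form and lies in the $\Sp_2$ (resp.\ $\O_2$) block, a Whitehead-type identity expresses it as a product of elementary generators once $n\ge 2$; hence $(u,0,\dots,0)\simm{\E(2n,R)}e_1$, giving $w\sim e_1$ and the lemma. The main obstacle is precisely the form-constrained bookkeeping concealed in the two previous paragraphs: unlike the linear case the generators occur in $\sigma$-paired form, so one must verify that the auxiliary terms they create at the partner coordinate remain in $J(R)$ and do not revive already-cleared coordinates, that the orthogonal clearing of $\sigma(1)$ through an auxiliary pivot closes up correctly, and that the unit normalization is genuinely realizable inside $\E(2n,R)$ for $n\ge 2$ in both the symplectic and the (characteristic $\neq 2$, isotropic) orthogonal settings.
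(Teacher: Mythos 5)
Your reduction to the case $w\equiv e_1\ (\mathrm{mod}\ I)$ and your symplectic argument are correct and in the same spirit as the paper's proof (the paper inducts on $n$ with an explicit $2n=4$ base case, you sweep directly; both work, and the Whitehead identity you invoke does hold inside ${\ESp}_{2n}(R)$ because $e_{12}(\lambda)$ and $e_{21}(\lambda)$ are among its generators). The orthogonal half, however, has two genuine gaps. First, the auxiliary-pivot maneuver for clearing coordinate $\sigma(1)$ cannot work: every generator $oe_{ij}(\lambda)$ preserves the form, so $v_1v_2+v_3v_4+\cdots+v_{2n-1}v_{2n}$ is an invariant of the orbit, and from $(u,w_2',0,\ldots,0)$ with $uw_2'\neq 0$ no sequence of elementary moves whatsoever can reach $e_1$. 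Concretely your maneuver is circular: after planting a $1$ in coordinate $3$, the only generators that feed a multiple of $w_3$ into coordinate $2$ are of the form $oe_{14}(\nu)$ (equivalently $oe_{32}(-\nu)$), and these simultaneously dump $\nu w_1$ into coordinate $4$; cleaning coordinate $4$ afterwards restores exactly $w_2'$ in coordinate $2$. The only correct exit here is the one the paper takes: once coordinates $3,\ldots,2n$ vanish, the isotropy of the row (part of the definition of unimodularity in the orthogonal setting, and preserved by ${\EO}_{2n}$) forces $w_2'=0$, using $char(R)\neq 2$. You mention isotropy only parenthetically; it is not a bookkeeping detail but the entire argument.

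Second, your final normalization is false in the orthogonal case: $\mathrm{diag}(u,u^{-1},1,\ldots,1)$ lies in ${\O}_{2n}(R)$ but in general not in ${\EO}_{2n}(R)$. As you yourself observe, no elementary generator is supported in the $(1,\sigma(1))$ block, so there is no ``Whitehead identity inside the ${\O}_2$ block''; the elementary subgroup of that block is trivial. Invariantly, this diagonal matrix has spinor norm the class of $u$, while the $oe_{ij}(\lambda)$ are Eichler transvections with trivial spinor norm: for $R=\mathbb{Z}_{(5)}$, $I=J(R)=5R$, $u=6\in 1+J(R)$, the element $6$ is not a square in $\mathbb{Q}$, so $\mathrm{diag}(6,6^{-1},1,\ldots,1)\notin{\EO}_{2n}(\mathbb{Z}_{(5)})$. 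The paper avoids both problems at once by normalizing the pivot to exactly $1$ early: it plants a $1$ in coordinate $3$ and applies $ge_{24}(i_1)$, whose side-effect coordinate is $\sigma(2)=1$, turning $1+i_1$ into $1$; after the sweep the row is $(1,i_2'',0,\ldots,0)$, and isotropy gives $i_2''=0$ exactly, so no diagonal matrix is ever needed. (If you want to keep your structure, the last step can be repaired by replacing $\mathrm{diag}(u,u^{-1},1,\ldots,1)$ with the hyperbolic Whitehead element $\mathrm{diag}(u,u^{-1},u^{-1},u,1,\ldots,1)$, which does lie in ${\EO}_{2n}(R)$ for $n\geq 2$ and also carries $e_1$ to $(u,0,\ldots,0)$.)
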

	
	\begin{proof}
		Let $\sim$ denote reduction modulo $I$ and $u \in {\Um}(2n,{R})$. Choose $\widetilde{\sigma} \in {\E}(2n,\widetilde{R})$ such that $\widetilde{\sigma}(\widetilde{u}) = e_1$. Lift $\widetilde{\sigma}$ to $\sigma \in {\E}(2n,R)$. Then $u' := \sigma(u) = (1+i_1, i_2, \ldots, i_{2n}) \in R^n,$ where $i_t \in I$ for $1 \leq t \leq 2n$. Observe that $1+i_1 \in U(R),$ since $i_1 \in I \subset J(R)$.
		We induct on $n$. Let $n=2$.
		\begin{align*}
			u' &= (1+i_1, i_2, i_3, i_{4})  \hspace{3.2cm} (\text{ apply } ge_{13}(-(i_3-1)(1+i_1)^{-1}) ) \\
			&\sim (1+i_1, i'_2, 1, i_4) \hspace{3.3cm} (\text{ apply } ge_{24}(i_1)) \\
			&\sim (1, i_2, 1, i_4') \hspace{4cm} (\text{ apply } ge_{14}(-i'_4)) \\
			&\sim (1, i''_2, 1, 0) \hspace{4.135cm} (\text{ apply }  ge_{13}(-1))\\
			&\sim (1, i''_2, 0, 0) 
		\end{align*}
		
		In the symplectic case, on applying $e_{12}(-i''_2)$ we can prove the required. For the orthogonal setup, note that $(1, i''_2, 0, 0)$ being isotropic would mean $i''_2 = 0$.
		Let $n>2$, and
		\[v=(v_1, \ldots, v_{2n}):=u'ge_{1(2n)}(-(i_{2n})(1+i_1)^{-1})ge_{1(2n-1)}(-(i_{2n-1})(1+i_1)^{-1}).\]
		By the inductive hypothesis, it follows that for $\widetilde{v} = (v_1, \ldots, v_{2n-2}) \in R^{2n-2},$ there exists $\widetilde{\sigma} \in {\E}(2n-2,R)$ such that $\widetilde{v}\widetilde{\sigma} = e_1 \in R^{2n-2}$. Choose $\sigma = \widetilde{\sigma} \perp I_2 \in {\E}(2n,R)$. Then $v\sigma = e_1$.
	\end{proof}
	
	The following is a consequence of the graded local-global theorem due to Basu-Chakraborty (\cite{Rab-Kun}, Theorem 8.2) and it helps us simplify the hypothesis of the main theorem to the local case. The proof of this proposition comes through by following the same steps as in Corollary 7.4 of \cite{Gubeladze-umodrow1-MR1161570}.
	
	\begin{pr}\label{tlg}
		Let $R$ be a ring and $\displaystyle{A=\mathop{\oplus}_{\substack{i \geq 0}} A_i}$ a positively graded $R$-algebra. Let $u \in {\Um}(2n,A)$ be such that $u\vert_{A_0} = e_{2n}$. Then $u \simm{{\E}(2n,A)} v$ if and only if $u \simm{{\E}(2n,A_{\m})} v$ for every $\m \in {\rm Max}(A_0)$.
	\end{pr}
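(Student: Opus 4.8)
The forward implication needs no work: localization at $A_0\setminus\m$ sends $\E(2n,A)$ into $\E(2n,A_{\m})$, so a global witness $u\sigma=v$ with $\sigma\in\E(2n,A)$ restricts to local witnesses over every $A_{\m}$. All the content is in the converse, and the plan is to globalize the local transporters by feeding a patched matrix into the graded local-global theorem (\cite{Rab-Kun}, Theorem 8.2) for $\G(2n,-)$ and $\E(2n,-)$, following the pattern of Corollary 7.4 of \cite{Gubeladze-umodrow1-MR1161570}. Throughout, $A_{\m}$ denotes the localization of $A$ at the degree-zero multiplicative set $A_0\setminus\m$, which is again positively graded with degree-zero part $(A_0)_{\m}$, and $A_{+}=\bigoplus_{i>0}A_i$ is the augmentation ideal, so that the normalization hypothesis reads $u\equiv e_{2n}$ modulo $A_{+}$.

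Assume $u\simm{\E(2n,A_{\m})}v$ for every $\m\in\Max(A_0)$, and choose $\epsilon_{\m}\in\E(2n,A_{\m})$ with $u\epsilon_{\m}=v$ over $A_{\m}$. First I would spread each $\epsilon_{\m}$ out: since $A_{\m}$ is the filtered colimit of the $A_s$ with $s\in A_0\setminus\m$, there is such an $s$ and an $\epsilon_s\in\E(2n,A_s)$ with $u\epsilon_s=v$ already over $A_s$, the clearing of denominators being controlled by Lemma \ref{noeth} (the noetherian input). The elements $s$ produced in this way lie in no maximal ideal of $A_0$, hence generate the unit ideal; by quasi-compactness finitely many $s_1,\dots,s_r$ suffice, with $(s_1,\dots,s_r)=A_0$ and $u\epsilon_{s_i}=v$ over each $A_{s_i}$.

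It remains to glue the $\epsilon_{s_i}$ into one $\sigma\in\E(2n,A)$. Over the overlap $A_{s_is_j}$ the element $\epsilon_{s_i}\epsilon_{s_j}^{-1}$ fixes $u$, i.e.\ lies in the stabilizer of $u$; reducing modulo $A_{+}$ and invoking $u\equiv e_{2n}$ keeps its degree-zero image under control. The graded local-global theorem is precisely what lets me split each such overlap discrepancy as a product of an $A_{s_i}$-factor and an $A_{s_j}$-factor and thereby modify the $\epsilon_{s_i}$ so that they agree on overlaps; the patched transporter $\sigma$ then lies in $\G(2n,A)$, is locally elementary and trivial in degree zero, and so by Theorem 8.2 belongs to $\E(2n,A)$, giving $u\sigma=v$ and hence $u\simm{\E(2n,A)}v$.

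The main obstacle is exactly this gluing: transporters carrying $u$ to $v$ form a torsor under the stabilizer of $u$, not a subgroup, so the local $\epsilon_{s_i}$ cannot be patched naively. The twin roles of the hypothesis $u\equiv e_{2n}\ (\mathrm{mod}\ A_{+})$ and of the graded local-global theorem are to kill the degree-zero part of the overlap discrepancies and to realize their splitting inside $\E(2n,-)$; I expect all the real difficulty to sit here rather than in the (routine) dilation and compactness bookkeeping. In the orthogonal case one must in addition keep every row isotropic and use $\mathrm{char}(R)\neq 2$, and check that the transformations manufactured above stay inside $\O_{2n}$ (resp.\ $\Sp_{2n}$); these are the checks already encoded in the definitions of $\G(2n,-)$ and $\E(2n,-)$.
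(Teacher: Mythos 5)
Your forward direction and the spreading-out/finite-cover bookkeeping are fine, but the gluing step --- which, as you say yourself, carries all the content --- does not work, and the appeal to the graded local-global theorem there is a misuse of that result. Theorem 8.2 of \cite{Rab-Kun}, like the Quillen principle it extends, takes as input a single \emph{global} matrix $\sigma \in {\G}(2n,A)$ with $\sigma\vert_{A_0}={\rm Id}$ which is locally elementary, and concludes $\sigma \in {\E}(2n,A)$; it does not factor a matrix over $A_{s_is_j}$ into an $A_{s_i}$-factor times an $A_{s_j}$-factor. Splitting statements of that type do exist (Lemma \ref{susl}, Proposition \ref{vor}), but they split \emph{elementary} matrices with no control over where the factors land, whereas your modification $\epsilon_{s_i}\mapsto\rho_i\epsilon_{s_i}$ preserves the transporter property $u\rho_i\epsilon_{s_i}=v$ only if each $\rho_i$ lies in the stabilizer of $u$. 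No lemma in the paper (or elsewhere) produces stabilizer-valued factors, and none can: the torsor obstruction you flag is exactly the reason locally trivial unimodular rows need not be globally trivial, so it cannot be dissolved by generic patching. The decisive symptom is that your argument never uses anything about $v$, yet the statement is false for unrestricted $v$: take $A=A_0[X]$ with $A_0=\mathbb{R}[x_0,x_1,x_2,x_3]/(x_0^2+x_1^2+x_2^2+x_3^2-1)$, $2n=4$ (symplectic case), $u=e_4$, $v=(x_0,x_1,x_2,x_3)$. Then $u\simm{{\E}(4,A_{\m})}v$ for every $\m\in\Max(A_0)$ since $(A_0)_{\m}$ is local (Proposition \ref{p3}), but $u\simm{{\E}(4,A)}v$ would give $v\simm{{\E}(4,A_0)}e_4$ after evaluating at $X=0$, which is impossible because ${\ESp}_4\subseteq{\rm E}_4$ and the row $v$, viewed as the identity self-map of the real $3$-sphere, is not null-homotopic. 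Every step of your proof up to the gluing goes through in this example, so the gluing is precisely where any such argument must break. (The proposition is intended, and is used in the paper, with $v=e_{2n}=u\vert_{A_0}$; a correct proof has to exploit this.)

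The route the paper actually takes (the Basu--Chakraborty theorem combined with the steps of Corollary 7.4 of \cite{Gubeladze-umodrow1-MR1161570}) never compares two local solutions on an overlap. One first normalizes each local transporter: restricting $u\epsilon_{\m}=e_{2n}$ to degree zero shows that $\epsilon_{\m}\vert_{A_0}$ stabilizes $e_{2n}$, so $\tau_{\m}:=\epsilon_{\m}\bigl(\epsilon_{\m}\vert_{A_0}\bigr)^{-1}$ is an elementary transporter with $\tau_{\m}\vert_{A_0}={\rm Id}$ --- this is where both normalizations enter. One then converts the grading into a homotopy parameter via the dilation $a_i\mapsto a_iX^i$ and runs Quillen--Suslin telescoping: spread $\tau_{\m}$ to some $\tau_s$ over $A_s$, use Lemma \ref{susl} together with Lemma \ref{noeth} to see that $\tau_s(bX)\tau_s(cX)^{-1}$ lifts to ${\E}(2n,A[X])$ whenever $b\equiv c$ modulo a high power of $s$, and chain $u(X)=u(b_0X)\sim u(b_1X)\sim\cdots\sim u(b_rX)=u(0)=e_{2n}$ over $A[X]$ using a relation $1=\sum c_is_i^{N}$ in $A_0$, finally evaluating at $X=1$. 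Each local solution is consumed one at a time inside this homotopy, which is how the stabilizer-cocycle problem that sinks your descent argument is sidestepped entirely.
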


	\begin{de} \tn{
			Let $R \subseteq S$ be an extension of rings. The extension is called an elementary subintegral extension if there exists $x \in S$ such that $S=R[x],$ where $x^2, x^3 \in R$. The extension is called subintegral if it is the filtered limit of elementary subintegral extensions.}
	\end{de}

	\begin{tr}\label{ttsoh2}
		Let $R \subseteq S$ be a subintegral extension. Then $\frac{{\Um}(2n,R)}{{\E}(2n,R)} = \{*\}$ if and only if $\frac{{\Um}(2n,S)}{{\E}(2n,S)} = \{*\}$.
	\end{tr}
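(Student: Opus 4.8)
The plan is to reduce to a single elementary subintegral step and then run a conductor-square patching argument. Since a subintegral extension is a filtered colimit of finite towers of elementary subintegral extensions, and both ${\Um}(2n,-)$ and ${\E}(2n,-)$ commute with filtered colimits, triviality over $S$ is the colimit of triviality over the intermediate algebras, and any individual equivalence is already realized at a finite stage. Hence for the forward implication it suffices to show that one step $R\subseteq R[x]$ (with $x^2,x^3\in R$) preserves triviality, and for the backward implication it suffices to prove the per-element descent: if $u\in{\Um}(2n,R)$ becomes ${\E}$-equivalent to $e_1$ over $R[x]$, then $u\sim e_1$ already over $R$. So fix $S=R[x]$ with $x^2,x^3\in R$.

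Set $I:=x^2S=x^2R+x^3R$; this is an ideal of both $R$ and $S$ contained in $R$, and the conductor square with corners $R,S,R/I,S/I$ is cartesian with both vertical maps surjective, so Proposition \ref{vor} and Corollary \ref{cmain} apply to it. The structural point is that $S/I=(R/I)[\bar x]$ with $\bar x^2=0$: the ideal $\bar x\,(S/I)$ is square-zero with quotient $S/xS=:\bar R$, while on the other corner the map $R/I\to\bar R$ has square-zero kernel $(R\cap xS)/I$ (indeed $(R\cap xS)^2\subseteq x^2S=I$) and the same quotient $\bar R=R/(R\cap xS)$. Feeding these two nilpotent ideals into Lemma \ref{nilr} together with its Nakayama converse yields the equivalence $(\star)$: the orbit spaces over $R/I$, over $S/I$, and over $\bar R$ are simultaneously trivial.

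For the forward implication, since throughout we are in the stable range $2n\ge\MD(R)$ (so $2n$ exceeds the stable rank of every ring in sight, by Bass' estimate ${\sr}(R)\le\dim(R)+1$), unimodular rows lift along the surjection $R\twoheadrightarrow\bar R$ and triviality descends to this quotient (in the orthogonal case one lifts isotropic rows, using $\mathrm{char}\,R\neq2$); thus $R$ trivial forces $\bar R$ trivial and, by $(\star)$, $S/I$ trivial. Then for any $u\in{\Um}(2n,S)$ one has $g_1(u)\sim e_1$ in $S/I$, Corollary \ref{cmain} produces $v\in{\Um}(2n,R)$ with $f_1(v)\sim u$ over $S$, and $R$-triviality gives $v\sim e_1$, whence $u\sim e_1$.

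For the backward per-element descent, start from $\tau\in{\E}(2n,S)$ with $f_1(u)\tau=e_1$. Applying $g_1$ shows the image of $\bar u=f_2(u)$ in $S/I$ is trivial; reducing further modulo $\bar x$ trivializes the image of $\bar u$ in $\bar R$, and the square-zero lifting of Lemma \ref{nilr} brings this back to some $\rho\in{\E}(2n,R/I)$ with $\bar u\rho=e_1$. It then remains to patch $\tau$ and $\rho$ into a single $\sigma\in{\E}(2n,R)$ with $u\sigma=e_1$ using the cartesian property: writing $w=g_1f_1(u)=g_2(\bar u)$, the element $\nu=g_1(\tau)^{-1}g_2(\rho)$ lies in the stabilizer of $e_1$ inside ${\E}(2n,S/I)$, and one must correct $\tau$ and $\rho$ by stabilizer elements so that they agree over $S/I$ before descending. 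I expect this last step to be the main obstacle: it requires a refinement of Proposition \ref{vor} that decomposes $\nu$ while remaining inside the stabilizer of $e_1$, and it requires the patched matrix to land in the elementary subgroup ${\E}(2n,R)$ rather than merely in ${\G}(2n,R)$ — precisely the place where the relative machinery of Lemma \ref{susl}, and in the orthogonal case a compatible lifting of the isotropy condition, must be invoked.
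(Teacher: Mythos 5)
Your proposal takes a genuinely different route from the paper (conductor square for $R\subseteq R[x]$, versus the paper's reduction to monoid rings and polynomial rings over $\mathbb{Z}$), and each direction of your argument has a gap; the one in the forward implication is fatal. You deduce triviality over $\bar R = S/xS \cong R/(R\cap xS)$ from triviality over $R$ by invoking a stable range bound ($2n\ge\MD(R)$ together with Bass's estimate ${\sr}(R)\le\dim(R)+1$). But Theorem \ref{ttsoh2} carries no hypothesis relating $2n$ to $\dim R$: in its intended application $R$ and $S$ are the monoid rings $R_0[M]\subseteq R_0[{\rm sn}(M)]$, whose Krull dimension grows with $\rank(M)$, while $\MD$ is computed from the coefficient ring $R_0$ alone. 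If $2n$ really exceeded the stable rank of every ring in sight, Theorem \ref{main} itself would follow at once from classical stability theorems and the paper would have nothing to prove. Without a stable range bound, triviality of ${\Um}/{\E}$ does not pass to quotient rings: passing to a quotient requires lifting unimodular (and, in the orthogonal case, isotropic) rows, which needs either stable range (unavailable) or a kernel inside the Jacobson radical, and $R\cap xS$ is neither nilpotent nor in $J(R)$ (take $R=k[t^2,t^3]\subseteq S=k[t]$, $x=t$: then $R\cap xS=(t^2,t^3)$ while $J(R)=0$). So the step ``$R$ trivial forces $\bar R$ trivial'' is unsupported, and with it the forward implication collapses.

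In the backward implication, your production of $\rho\in{\E}(2n,R/I)$ with $\bar u\rho=e_1$ is sound (the two square-zero kernels and the element-wise form of Lemma \ref{nilr} do exactly that), but the step you yourself flag as the main obstacle is a genuine gap, not a technicality: a lift of $\nu=g_1(\tau)^{-1}g_2(\rho)$ through ${\E}(2n,S)\twoheadrightarrow{\E}(2n,S/I)$ need not stabilize $e_1$, and even after forcing the two images to agree, the matrix patched from the corrected pair lies only in ${\G}(2n,R)$; knowing $u\sigma=e_1$ for some $\sigma\in{\G}(2n,R)$ is completability, not $u\simm{{\E}(2n,R)}e_1$. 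The paper never patches matrices over the conductor square, precisely to avoid both problems. Instead it presents $S$ as a quotient of a polynomial ring $A=\mathbb{Z}[\{t_b\}_{b\in B}]$ with $B=R\cup\{x\}$, forms the pullback $P$ of $R\to S\leftarrow A$, and inside $P$ exhibits Gubeladze's monoid $N$ (a filtered limit of affine $\phi$-simplicial monoids) and an ideal $I$ with $(P/I)_{red}$ a polynomial ring over $\mathbb{Z}$. Triviality over $\mathbb{Z}[N]$ (this is where the auxiliary result on $\phi$-simplicial monoid rings enters) and over $P/I$ (Proposition \ref{pl} plus Lemma \ref{nilr}) gives triviality over the pullback corner $P$ by Corollary \ref{cmain}; then the cartesian square on $(P,R,A,S)$ and Corollary \ref{cmain} again transfer triviality from $S$ down to $R$ --- the direction the paper actually needs. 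Only rows are ever patched, and the corner $P$, whose triviality is known unconditionally because all the ``known'' corners live over the one-dimensional ring $\mathbb{Z}$, plays exactly the role for which your conductor square has no candidate.
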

	\begin{proof}
		Curiously enough to prove the above theorem we need to establish 
		\begin{align}
			\frac{{\Um}(2n,Z[M])}{{\E}(2n,Z[M])} = \{*\},
		\end{align}
		where $M$ is a $\phi$-simplicial monoid and $n \geq 2$. This can be done by replicating the proof in \cite{Gub-seminormality-MR1824231} as is for the orthogonal or symplectic matrices. 
		Without loss of generality we may assume $S=R[x],$ where $x^2, x^2 \in R$. \vp 
		Let us define:
		\begin{enumerate}[label=$($\roman*$)$, noitemsep]
			\item $B:=R \cup \{x\}$ and $A = \nz{}[\{t_b\}_{b \in B}]$,
			\item $\pi_1: A \rightarrow S$ the natural surjection sending $t_b$ to $b$,
			\item $P$ is the patching of $(R,i_1)$ and $(A,\pi_1)$,
			\item $N=\pz{}[t_b \mid t^2_x \text{ divides } t_b \text{ for } b \in B ]$,
			\item Let $I$ be the $\nz{}$-submodule of $P$ spanned by $N -\{1\}$.
		\end{enumerate}
		Then by Step 2 of Theorem 2 in \cite{Gub-seminormality-MR1824231}, we have that $N$ is a filtered limit of affine $\phi$-simplicial monoids and $I \subseteq P$ is an ideal of $P$. Consider the following patching diagrams:
		\begin{figure}[H]
			\tikzset{ampersand replacement=\&}
			\begin{equation*}
				\begin{tikzcd}
					P \arrow[rr,""] \arrow[dd,swap,""] \&\&
					R  \arrow[dd,"i_1"] \\
					\&  \\
					A  \arrow[rr, "\pi_1"] \&\& S
				\end{tikzcd}{}\hspace{1cm}%
				\begin{tikzcd}
					\nz{}[N] \arrow[rr,""] \arrow[dd,swap,""] \&\&
					P \arrow[dd,"\pi_2"] \\
					\&  \\
					\nz{} \arrow[rr, "i_1"] \&\& P/I.
				\end{tikzcd}{}
			\end{equation*}
		\end{figure}
		\noindent where $i_j$'s are inclusion and $\pi_j$'s are surjection for $i=1,2$. By applying Corollary \ref{cmain}, we see that it is sufficient to prove transitive action corresponding to the rings $\nz{}[N]$ and $P/I$. We have transitive action in $\nz{}[N]$ due to (1) as $N$ is a filtered limit of $\phi$-simplicial monoids. Also $(P/I)_{red} = \nz{}[\{t_a\} \mid a \in A]$. By Proposition \ref{pl}, $\frac{{\Um}(2n,R[\nz{k}])}{{\E}(2n,R[\nz{k}])} = \{*\}$. Since by Lemma \ref{nilr} transitive action is preserved going modulo nilpotents, therefore the conclusion follows.
	\end{proof}

	The simplification of hypothesis we claimed at the beginning now follows from Proposition \ref{ttsoh1} and Theorem \ref{ttsoh2}, where the argument for $R$ being local follows from Theorem \ref{tlg}. Thus in the next section we may assume that $R$ is a local ring and $M$ is an affine positive normal monoid.

	\section{Transitive action of ${\E}(2n,R[M])$ on ${\Um}(2n,R[M])$}
	
	The reader may refer to Section \ref{prelim} and \ref{s2} for notations. We start this section with the following lemma; a variant of the well-known prime avoidance lemma.
	
	\begin{lm}\label{l7.1}
		$($\cite{Gubeladze-unimodularMR3853049}, Lemma 7.1$)$ Let R be a ring and $u=(u_1, \ldots, u_n) \in {\Um}(n,R)$ for $n \geq 2$. Further, let $Ru_1+Rr=R$. Then for any finite family of ideals $\mathcal{I} = \{I_j\}_{j=1}^k$ such that $I_1, \ldots, I_k \subset R,$ there exists an element $r' \in (u_2, \ldots, u_n)$ such that for all $c \in \pz{},$  one has $u_1 + r^cr' \notin I_j$ for all $1 \leq j \leq k$.
	\end{lm}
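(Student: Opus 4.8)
The plan is to reduce the statement to a prime-avoidance argument, the essential trick being to convert the quantifier ``for all $c$'' into a clean dichotomy governing how $r'$ should sit relative to each ideal.

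\textbf{Step 1 (reduction to an antichain of primes).} Since each $I_j$ is a proper ideal, I would choose a prime $P_j \supseteq I_j$, e.g. a maximal ideal containing it. Because $I_j \subseteq P_j$, any element outside $P_j$ is automatically outside $I_j$, so it suffices to find $r'$ with $u_1 + r^cr' \notin P_j$ for all $j$ and all $c$. Moreover, if $P_i \subseteq P_j$ then staying out of $P_j$ already forces staying out of $P_i$, so I may discard redundant members and assume $P_1, \dots, P_k$ are pairwise incomparable.

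\textbf{Step 2 (the dichotomy).} Partition the indices by whether $u_1$ lies in $P_j$: set $A = \{j : u_1 \notin P_j\}$ and $B = \{j : u_1 \in P_j\}$. For $j \in B$, the hypothesis $Ru_1 + Rr = R$ forces $r \notin P_j$, and unimodularity of $u$ forces $I := (u_2, \dots, u_n) \not\subseteq P_j$ (otherwise $(u_1,\dots,u_n) \subseteq P_j \ne R$). The idea is to seek $r' \in I$ that lies \emph{inside} every $P_j$ with $j \in A$ and \emph{outside} every $P_j$ with $j \in B$. Such an $r'$ settles all powers $c$ simultaneously: for $j \in A$ one has $r^c r' \in P_j$, so $u_1 + r^c r' \equiv u_1 \not\equiv 0 \pmod{P_j}$; for $j \in B$ one has $u_1 \in P_j$ while $r, r' \notin P_j$, so $u_1 + r^c r' \equiv r^c r' \not\equiv 0 \pmod{P_j}$ (primeness of $P_j$ keeping $r^c r'$ out, including the case $c=0$).

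\textbf{Step 3 (existence of $r'$).} I would put $J := I \cap \bigcap_{j \in A} P_j$ and seek $r' \in J \setminus \bigcup_{j \in B} P_j$. By prime avoidance this is possible as soon as $J \not\subseteq P_{j_0}$ for each $j_0 \in B$. To verify this, note $I \cdot \prod_{j \in A} P_j \subseteq J$, and this product is not contained in the prime $P_{j_0}$ because none of its factors is: $I \not\subseteq P_{j_0}$ by Step 2, and $P_j \not\subseteq P_{j_0}$ for $j \in A$ since the family is an antichain and $j \ne j_0$. Hence $J \not\subseteq P_{j_0}$, the required $r' \in (u_2,\dots,u_n)$ exists, and combining with Step 2 gives $u_1 + r^c r' \notin P_j \supseteq I_j$ for all $j$ and all $c$.

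The main obstacle is precisely the passage handled in Step 2. A naive prime-avoidance argument only produces an $r'$ with $u_1 + r' \notin I_j$, i.e. it controls a single value of $c$; the lemma instead demands non-membership for \emph{every} power $r^c$ at once. Rather than fighting each $c$ separately, the fix is to arrange the geometry so that modulo each $P_j$ the expression $u_1 + r^c r'$ degenerates to a fixed nonzero residue — either $u_1$ (when $r' \in P_j$) or $r^c r'$ (when $u_1 \in P_j$) — which removes all dependence on the accidental vanishing of $u_1 + r^c r'$ for particular exponents. Making these ``inside'' and ``outside'' constraints mutually compatible is exactly what the incomparability reduction of Step 1 and the unimodularity estimate $I \not\subseteq P_{j_0}$ guarantee.
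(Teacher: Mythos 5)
Your proof is correct: the reduction to maximal ideals, the dichotomy on whether $u_1 \in P_j$ (placing $r'$ inside the primes of type $A$ and outside those of type $B$, which kills the dependence on $c$ at once), and the avoidance step via $I\cdot\prod_{j\in A}P_j \not\subseteq P_{j_0}$ are all sound, including the case $c=0$. The paper itself does not prove this lemma but imports it from Gubeladze (\cite{Gubeladze-unimodularMR3853049}, Lemma 7.1), whose argument is essentially this same one, so your proposal matches the intended proof rather than offering a genuinely different route.
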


	\begin{pr}\label{p7.3}
		Let R be a local ring, $T \subseteq R[t_1, \ldots, t_r]$ a $t_1$-tilted algebra and $n \geq 2$. For every $u \in {\Um}(2n,T)$ with $u \vert_{t_1 = 0} \in {\Um}(2n,R),$ there exists $v = (v_1, \ldots, v_{2n}) \in {\Um}(2n,T)$ such that
		\begin{enumerate}[label=$($\roman*$)$, noitemsep]
			\item $v \simm{{\E}(2n,T)} u$ and
			\item $\hgt_{R[t_1,\ldots,t_r]}(v_1, \ldots, v_i) \geq i$ for all $i$.
		\end{enumerate}
	\end{pr}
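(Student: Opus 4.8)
===PROOF PROPOSAL===

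\textbf{The plan is} to produce the desired $v$ by successively adjusting the coordinates of $u$ so that each partial ideal $(v_1,\dots,v_i)$ acquires height at least $i$ in the polynomial ring $S=R[t_1,\dots,t_r]$, while staying in the elementary orbit. The key tool is the prime avoidance Lemma \ref{l7.1}, which lets us add an element of the tail $(u_2,\dots,u_{2n})$ to a given coordinate to dodge a prescribed finite family of primes. The $t_1$-tilted hypothesis and the condition $u|_{t_1=0}\in{\Um}(2n,R)$ are what let me invoke unimodularity over $R$ to feed Lemma \ref{l7.1}, since over the local base $R$ the first coordinate (or an elementary adjustment of it) will be a unit modulo the relevant ideals.

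\textbf{First I would} set up the induction on $i$. Suppose $(v_1,\dots,v_{i-1})$ already has height $\geq i-1$; I want to modify $v_i$ (by an elementary transformation adding a multiple of the remaining coordinates, which keeps us in the ${\E}(2n,T)$-orbit) so that $v_i$ avoids every minimal prime $\p_1,\dots,\p_k$ of $(v_1,\dots,v_{i-1})$ of height exactly $i-1$. Concretely, for each such minimal prime I apply Lemma \ref{l7.1} with $\mathcal{I}=\{\p_j\}$: unimodularity of $u$ guarantees some $u_1 + r^c r'\notin \p_j$, and the power $r^c$ is exactly the $t_1$-tilted freedom needed to keep the modifying element inside $T$ (recall $T$ contains $t_1^p m$ for all large $p$). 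Adding such an element to $v_i$ via $ge$-generators forces $v_i$ outside each $\p_j$, so no height-$(i-1)$ prime contains $(v_1,\dots,v_i)$; hence every minimal prime over the new ideal has height $\geq i$, giving $\hgt(v_1,\dots,v_i)\geq i$.

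\textbf{The hard part will be} guaranteeing that the elementary moves used to avoid primes can be realized inside the $t_1$-tilted algebra $T$ rather than merely in $S$, and that they simultaneously preserve the height already built up in $(v_1,\dots,v_{i-1})$ without destroying it. The tilting condition is precisely designed to absorb this: multiplying the avoiding element $r'\in(v_2,\dots,v_{2n})$ by a sufficiently high power of $t_1$ lands it in $T$, and since multiplication by $t_1$ does not change vanishing loci away from $t_1=0$, the prime-avoidance remains valid; the hypothesis $u|_{t_1=0}\in{\Um}(2n,R)$ ensures the $t_1=0$ slice stays unimodular so the induction base is controlled. I would carry the argument coordinate by coordinate for $i=1,\dots,2n$, at each stage re-applying Lemma \ref{l7.1} to the finite set of newly relevant primes, and conclude that the resulting $v$ satisfies both $v\simm{{\E}(2n,T)}u$ and the height bound. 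Finally I would remark that unimodularity of $v$ is automatic since it lies in the same orbit as $u$.
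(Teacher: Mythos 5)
Your overall skeleton --- induction over the coordinates, prime avoidance via Lemma \ref{l7.1} applied with $r=t_1$, and the $t_1$-tilted property to push the avoiding element into $T$ --- is the same as the paper's proof. The paper additionally carries two pieces of bookkeeping that you only gesture at: a preliminary reduction (via Proposition \ref{p3}, using that the local ring $R$ is semi-local) to $u_i\vert_{t_1=0}=1$ for all $i$, and the requirement, maintained as part of the induction hypothesis, that each current coordinate $v_i$ be comaximal with $t_1$; the latter is exactly the hypothesis ``$Ru_1+Rr=R$'' of Lemma \ref{l7.1}, which your application with $r=t_1$ needs at every stage, not just at the base.

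The genuine gap is at the step ``Adding such an element to $v_i$ via $ge$-generators forces $v_i$ outside each $\mathfrak{p}_j$.'' You treat the $ge_{ij}(\lambda)$ as if they were ${\GL}$-type transvections that add a multiple of one coordinate to another with no side effects. They are not: acting on the right, $ge_{kw}(\lambda)$ changes entry $w$ by $\lambda v_k$ \emph{and} entry $\sigma(k)$ by $\pm\lambda v_{\sigma(w)}$, and in the orthogonal case there is no generator with $w=\sigma(k)$ at all. So to add $p't_1^c=a_{t+1}v_{t+1}+\cdots+a_{2n}v_{2n}$ to the coordinate $v_t$, one is forced to use the family $ge_{(t-1)w}$, $w>t$, when $t$ is even (so that $\sigma(t-1)=t$), or $ge_{(t+1)w}$, $w>t+1$, when $t$ is odd, with the signs $\lambda=\pm1$ chosen so that the coupled contributions sum to $p't_1^c$ rather than cancel; and the composite move deposits into entry $t$ not $p't_1^c$ but $p't_1^c+\delta v_{t-1}$ (respectively $p't_1^c+\delta v_{t+1}$) for some uncontrolled $\delta\in T$. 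This is precisely what the paper's even/odd case split handles: for even $t$ the parasitic term $\delta v_{t-1}$ lies in $(v_1,\dots,v_{t-1})$, hence in every prime being avoided, and is harmless; for odd $t$ it is not, and the avoidance must instead be run against the minimal primes of the enlarged ideal $A(v_1,\dots,v_{t-1},v_{t+1})$ so that $\delta v_{t+1}$ is absorbed. Without this analysis your modified coordinate can land back inside a prime you meant to avoid, and the height count collapses. A smaller but real defect: the induction cannot run ``for $i=1,\dots,2n$'' as you propose; the generator step needs room at the top of the row, and heights above $\dim R[t_1,\dots,t_r]=d+r$ are unattainable, which is why the paper establishes the height inequalities only for $t\le\min\{2n-2,\,d+r\}$ --- enough for its use in Corollary \ref{c7.4}.
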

	\begin{proof}
		Let $A = R[t_1, \ldots, t_r]$ and $u = (u_1, \ldots, u_{2n})$. By Proposition \ref{p3}, we may assume  without loss of generality $u_i\vert_{t_1=0} = 1$ $\forall ~ i$. To prove the height inequalities we assume an inductive stance \noindent $\mathbb{H}_t$ on $t$, where $ 1 \leq t \leq \min\{2n-2, d+r\}$: \vp 
		
		\noindent $(\mathbb{H}_t):$ There exists $v \in {\Um}(2n,A)$ such that
		\begin{enumerate}[noitemsep]
			\item $u \simm{{\E}(2n,T)} v$,
			\item $v_i$ and $t_1$ are comaximal for all $1 \leq i \leq 2n$ and
			\item $\hgt_A(v_1, \ldots, v_j) \geq j$ for all $j \leq t$.
		\end{enumerate}
		
		\noindent Let $t=1$. Since $u_1 = 1+t_1{u'},$ for some ${u'} \in T,$ therefore $\hgt_A(u_1) \geq 1$. Let $t \geq 2$. From $(\mathbb{H}_{t-1}),$ we can procure $v \in {\Um}(2n,T)$ such that $v_iA + t_1A = A$ for all $1 \leq i \leq 2n$. We split the proof into two parts:\vp 
		
		\noindent \textbf{Case 1: $t$ is even}: Since $t<2n-1,$ by Lemma \ref{l7.1}, corresponding to the collection $\{\mathfrak{p}_i\}$ of all minimal primes over $I = A(v_1, \ldots, v_{t-1})$, we can find
		\[p =  p_1v_1 + \cdots + p_{t-1}v_{t-1} + p_{t+1}v_{t+1} + \cdots + p_{2n}v_{2n} \in A,\]
		
		\noindent such that for all $c \in \mathbb{N}$ and $i,$ we have $v_{t} + {t_1}^cp \notin \mathfrak{p}_i$. Let $p' = p_{t+1}v_{t+1} + \cdots + p_{2n}v_{2n}$. Then we may conclude that $v_{t} + {t_1}^cp' \notin \mathfrak{p}_i,$ for all $i$. Since $T$ is $t_1$-tilted we may multiply ${p}_i$ with appropriate power in $t_1$ to safely assume that  for large enough $c$
		\[p't_1^c  = a_{t+1}v_{t+1} + \cdots + a_{2n}v_{2n} \in T(v_{t+1}, \ldots, v_{2n}).\]
		
		\noindent Let $\eta = \displaystyle{\mathop{\prod}_{w=t+1}^{ 2n}} ge_{(t-1)w}(\lambda
		a_{\sigma(w)}),$ where 
		\[ 
		\lambda:= \left\{
		\begin{array}{ll}
			\hspace{0.25cm}1 & \text{ (symplectic case) $w$ is even,} \\
			-1 & \text{ else. }\\
		\end{array} 
		\right. 
		\]
		\noindent  Then the $t$'th entry of $v' = v\eta \in A$ is given by $v'_t=(v\eta)_t = v_t +p't_1^c + \delta v_{t-1},$ for some $\delta \in T$. Since $v_{t} + {t_1}^cp' \notin \mathfrak{p}_i,$ for all $i,$ we may conclude the height requirement by employing $\mathbb{H}_{t-1}$ and observing  
		\begin{align*}
			\hgt_A(v'_1, \ldots, v'_t) &= \hgt_A(v_1, \ldots, v_t+p't_1^c)\\
			&=  \hgt_A(v_1, v_2, \ldots, v_{t-1}) + 1 \geq t. 
		\end{align*}

		\noindent \textbf{Case 2: $t$ is odd}: 
		The proof is similar to above if we replace the former $I$ by $I = A(v_1, \ldots, v_{t-1}, v_{t+1})$ and consider the collection of minimal primes over $I$.
	\end{proof}
	
	\begin{co}\label{c7.4}
		Let R be a ring, $A=R[t_1, \ldots, t_r],$ $T \subseteq A$ a $t_1$-tilted algebra and $2n \geq \MD(R)$. Let $u \in {\Um}(2n,T)$ be such that $u\vert_{t_1=0} \in {\Um}(2n,R)$. Then there exists $v \in {\Um}(2n,T)$ such that $u \simm{{\E}(2n,T)} v$ and $\mathcal{L}(v_{2n})=1$.
	\end{co}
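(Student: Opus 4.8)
The plan is to reduce to the situation furnished by Proposition \ref{p7.3} and then to exploit its height estimates together with the $t_1$-tilted structure of $T$ to force the leading coefficient of the last coordinate to become a unit, which over the (local) base ring can be normalized to $1$. As in the previous results, and in keeping with the reduction of Section 4, I would work in the case where $R$ is local, which is precisely the setting in which Proposition \ref{p7.3} is available. First I would apply Proposition \ref{p3} to arrange $u_i\vert_{t_1=0}=1$ for all $i$, and then invoke Proposition \ref{p7.3} to replace $u$ by a row $v\simm{{\E}(2n,T)}u$ with every $v_i$ comaximal with $t_1$ and satisfying $\hgt_A(v_1,\dots,v_i)\ge i$ for all $i\le 2n-2$. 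Since $2n\ge\MD(R)$ forces $2n-2\ge d=\dim R$, the partial ideal $A(v_1,\dots,v_{2n-2})$ already has height at least $\dim R$; this is the input for the leading-coefficient analysis.

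Next I would carry out the elementary adjustment. For $1\le i\le 2n-2$ one has $\sigma(i)\ne 2n$, so the generator $ge_{i,2n}(\lambda)$ is available, and for a monomial $\mu$ and $N\gg 0$ the element $\lambda=t_1^{N}\mu$ lies in $T$ because $T$ is $t_1$-tilted. Right multiplication by $ge_{i,2n}(\lambda)$ adds $\lambda v_i$ to the last coordinate (while altering position $2n-1=\sigma(2n)$, which is irrelevant to the leading term of $v_{2n}$). With respect to the pyramidal degree, in which $t_1$ has strictly positive degree, the leading term of $\lambda v_i$ is the single monomial $t_1^{N}\mu\,\mathcal{H}(v_i)$, whose coefficient is $\mathcal{L}(v_i)$ and whose degree exceeds that of $\mathcal{H}(v_{2n})$ once $N$ is large. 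Hence, if some $\mathcal{L}(v_i)$ is a unit $c\in R^{\times}$, choosing $\lambda=c^{-1}t_1^{N}\mu\in T$ makes $v_{2n}+\lambda v_i$ have a single dominant monomial with coefficient $1$, so $\mathcal{L}(v_{2n})=1$ after this operation. Equivalently, one may first manufacture a unit leading coefficient and then rescale the hyperbolic pair $(v_{2n-1},v_{2n})$ by $\mathrm{diag}(c,c^{-1})$, which is elementary since $c\in R^{\times}\subseteq T^{\times}$.

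The hard part will be the purely ring-theoretic claim that, among the height-controlled coordinates $v_1,\dots,v_{2n-2}$, a unit leading coefficient can be produced. This is exactly where the bound $2n\ge\MD(R)$ is indispensable and unimodularity alone does not suffice: a short unimodular row over $R[t_1]$ can have every leading coefficient in the maximal ideal $\mathfrak{m}$ (already rows of the shape $(1+\mathfrak{m}t_1,\dots)$ over a local ring exhibit this). I would argue that the estimate $\hgt_A(v_1,\dots,v_{2n-2})\ge 2n-2\ge\dim R$ obstructs the simultaneous degeneration of all the leading forms modulo $\mathfrak{m}$: after using the prime-avoidance Lemma \ref{l7.1}, exactly as in the proof of Proposition \ref{p7.3}, to render the relevant leading forms single monomials and to dodge the minimal primes of the partial ideals, a suitable combination of $v_1,\dots,v_{2n-2}$ acquires a leading coefficient outside $\mathfrak{m}$. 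Making this degeneration-avoidance precise—tracking the pyramidal degree through the tilted algebra so that no lower-order cancellation reintroduces $\mathfrak{m}$ into the top coefficient—is the delicate point; once it is in place, the first two steps conclude the argument.
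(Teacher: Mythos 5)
Your skeleton---Proposition \ref{p7.3} plus the $t_1$-tilted structure plus transvections into the last coordinate---is the same as the paper's, but the step you yourself flag as ``the delicate point'' is exactly where the proof lives, and the fact you reach for is not quite the right one. The inequality you quote, $\hgt_A(v_1,\dots,v_{2n-2})\ge 2n-2\ge d$, is too weak: an ideal of height equal to $\dim R$ can perfectly well have all its leading coefficients in $\m$ (the ideal $\m A$ is the basic example), so ``height $\ge d$'' does not obstruct the degeneration you want to rule out. What the paper actually uses is the ideal generated by $d+1$ of the coordinates, $I=A(u_1,\dots,u_{d+1})$, whose height is $\ge d+1>\dim R$ by Proposition \ref{p7.3} (this is available because $2n\ge\MD(R)$ gives $d+1\le 2n-1$), together with the standard dimension-theoretic fact that an ideal of $A=R[t_1,\dots,t_r]$ of height strictly greater than $\dim R$ has leading-coefficient ideal equal to all of $R$. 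This produces $f=f_1u_1+\cdots+f_{d+1}u_{d+1}$ with $\mathcal{L}(f)=1$, where the $f_i$ are polynomials in $A$; no single coordinate, and no $R$-linear combination of coordinates, need have unit leading coefficient. Consequently your single-transvection picture cannot work as stated: one must add the whole combination $t_1^cf=\sum_t\lambda_tu_t$ (with $\lambda_t=t_1^cf_t\in T$ by tiltedness) through a string of transvections $ge_{t(2n)}(\lambda_t)$.

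Once several transvections are composed, the side effects you dismissed become the real bookkeeping. Note first that $u\,ge_{t(2n)}(\lambda)$ perturbs coordinate $\sigma(t)$, not coordinate $2n-1$, and the perturbation is a multiple of $u_{2n-1}$; since both $t$ and $\sigma(t)$ can occur among the indices used, later factors feed multiples of $u_{2n-1}$ into the last coordinate, so $v'_{2n}=u_{2n}+t_1^cf+tu_{2n-1}$ rather than $u_{2n}+t_1^cf$. In the symplectic case the paper removes this term with a final factor $e_{(2n-1)(2n)}(-t)$, a generator of type $e_{i\sigma(i)}$ which exists only in ${\ESp}_{2n}$; in the orthogonal group such generators are absent, which is why the paper instead takes $I=A(u_1,u_3,\dots,u_{2d+1})$, using only odd indices so that the perturbed (even) coordinates are never reused and no correction is needed---and this is precisely where the stronger bound $2n\ge 2d+4$ enters. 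Your proposal omits the orthogonal half of the statement entirely, and also misses the boundary subcase $d+1=2n-1$ of the symplectic case, which needs a modified correction term $e_{(2n-1)(2n)}(-t+\lambda_{d+1})$. So: right reduction and right first moves, but both the production of a monic combination (the commutative-algebra crux) and the group-theoretic handling of the transvections' side effects are missing, and the second cannot even be formulated uniformly for the two groups.
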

	
	\begin{proof}
		We break the proof into two cases: \vp 
		
		\noindent $(1)$: \textbf{(Symplectic case)} Let $I = A(u_1, u_2, \ldots, u_{d+1})$. By Proposition \ref{p7.3}, we may assume $\hgt_A(I) \geq d+1$. Then the ideal generated by $\mathcal{L}(g)$ where $g$ varies over $ I,$ is a unit ideal. Let $f = f_1u_1 + \cdots + f_{d+1}u_{d+1},$ where $f_i \in A,$ be such that $\mathcal{L}(f)=1$. Let $c > \deg(u_i)$ for all $1 \leq i \leq 2n,$ be such that $\lambda_i = t_1^cf_i \in T$ for all $1 \leq i \leq d+1$. Note that here $t_1^cf = \lambda_1u_1 + \cdots + \lambda_{d+1}u_{d+1}$.\vp
		
		\noindent {Subcase (i)}: If $d+1<2n-1,$ define:
		\[v' = u\prod_{1 \leq t \leq d+1}\hspace{-0.3cm}se_{t(2n)}(\lambda_t).\]
		Then $v'_{2n} = u_{2n} + t_1^cf + g$ for some $g = tu_{2n-1} \in T(u_{2n-1})$. Let $v = v'e_{(2n-1)(2n)}(-t)$. For $c \gg 0,$ we thus have $\mathcal{L}(v_{2n}) = \mathcal{L}(u_{2n} + t_1^cf) =1$. \vp 
		
		\noindent Subcase (ii): If $d+1 = 2n - 1,$ then we proceed as before with a small modification, and set $v' = u\prod_{1 \leq t \leq d}se_{t(2n)}(\lambda_t)$. Then $v'_{2n} = u_{2n} + \lambda_1u_1 + \cdots + \lambda_{d}u_{d} + g$ for some $g = tu_{2n-1} \in T(u_{2n-1})$. Let $v = v'e_{(2n-1)(2n)}(-t+\lambda_{d+1})$. Then 
		\[\mathcal{L}(v_{2n}) = \mathcal{L}(u_{2n} + t_1^cf) =1.\]
		
		\noindent $(2)$:\textbf{(Orthogonal case)} Let $I = A(u_1, u_3, \ldots, u_{2d+1})$. By Proposition \ref{p7.3}, proceeding inductively one may prove that $\hgt_A(u_1, u_3, \ldots, u_{2d+1}) \geq d+1$. Define $f, c , \lambda_i$ as in the previous case corresponding to $I$ (with appropriate caution to the $u_i$'s involved). Since $2d+1<2n-1,$ we may define $v \in {\Um}(2n,T)$ as
		\[v = u\Bigg({\displaystyle\prod_{~\{t \text{ odd } \mid 1 \leq  t \leq 2d+1\}}\hspace{-1cm}oe_{t(2n)}(\lambda_t)}\Bigg).\]
		Then $\mathcal{L}(v_{2n}) = \mathcal{L}(u_{2n} + t_1^cf) = 1$.
	\end{proof}

	\begin{lm}\label{llocal}
		Let $R$ be a ring and $M = M(\Delta) \cup M(\Gamma)$ be a pyramidal decomposition corresponding to the extremal generator $m \in M$. Assume $u \in {\Um}(2n,R[M])$ is such that $\mathcal{H}(u_{2n}) = rm^c,$ for $n \geq 2$ and some $r \in U(R),$ corresponding to the associated pyramidal degree. Then $u_{\m} \sim e_1$ for every $\m \in {\M}({R[\Gamma]})$.
	\end{lm}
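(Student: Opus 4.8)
The plan is to read the hypothesis $\mathcal{H}(u_{2n}) = rm^c$ with $r \in U(R)$ as the statement that the last coordinate is \emph{monic in the extremal generator $m$} with respect to the pyramidal degree $\delta$, and then to run a Horrocks-type (monic inversion) argument over the local base produced by localizing at $\m$. First I would fix $\m \in \M(R[M(\Gamma)])$ and localize $R[M]$ at the multiplicative set $R[M(\Gamma)] \setminus \m$; writing $B = R[M(\Gamma)]_\m$ (so $R[\Gamma] = R[M(\Gamma)]$ localizes to a \emph{local} ring), the pyramidal degree descends to $R[M]_\m$, and $u_{2n}$ remains monic in $m$ of degree $c\,\delta(m)$ with unit leading coefficient $r \in U(B)$. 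The whole point of localizing at the $\Gamma$-corner is that the part of $M$ carrying nonpositive degree is absorbed into the local base, leaving $m$ to play the role of a genuine variable.

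Next I would convert the monoid-ring statement into a genuine polynomial one. Since $m$ is an extremal generator, condition (ii) of that definition, namely $\pz{} \cong \mathbb{R}_+(m)\cap M$, together with the pyramidal decomposition lets me invoke Theorem \ref{tac}: the extremal generator $m$ corresponds to the distinguished variable $t_1$ of the $t_1$-tilted presentation inside $R[t_1, \ldots, t_r]$, with $\pi_i(t_1) = m$, so that ``monic in $m$'' translates to ``monic in $t_1$''. After localizing the base at $\m$, the relevant corner of the cartesian diagram of Theorem \ref{tac} is a polynomial extension $B[t_1, \ldots, t_r]$ of the local ring $B$ in which the image of $u_{2n}$ is monic in the single variable $t_1$.

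With this reduction in place the result should follow from the Horrocks-type theorem for the classical groups: over a polynomial extension of a local ring, a unimodular row (isotropic, in the orthogonal case) whose last coordinate is monic in one variable is $\E$-equivalent to $e_1$, once $2n$ lies in the range $2n \geq \MD(R)$. This is supplied by \cite{DhorajiaSymplectic-MR2971853} and \cite{Keshari-SymplecticMR2295082} in the symplectic case and by \cite{Bhatwadekar-SymplecticMR1858341} in the orthogonal case. Alternatively, one can argue through a semilocal quotient: monicity of $u_{2n}$ in $m$ forces $R[M]_\m/(u_{2n})$ to be module-finite over the local ring $B$, hence semilocal, so the semilocal clause of Proposition \ref{p3} (valid since $n \geq 2$) trivialises the shortened row, after which the monic coordinate is used to lift the reduction back to $R[M]_\m$.

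The main obstacle I anticipate is the translation step: showing rigorously that localizing at a maximal ideal of $R[M(\Gamma)]$, combined with the extremality of $m$ and the pyramidal structure, yields precisely the polynomial-over-a-local-ring situation in which a single-variable monic Horrocks statement is available. Concretely, one must check that the lower-$\delta$-degree terms of $u_{2n}$ are absorbed into finitely many $B$-module generators, so that $m$ genuinely behaves as a free (respectively integral modulo $u_{2n}$) variable over $B$. A secondary, more routine difficulty is the orthogonal case, where every elementary reduction must preserve the isotropy condition $\langle v, v\rangle = 0$; this is exactly why the characteristic and isotropy hypotheses recorded in the Remark are imposed.
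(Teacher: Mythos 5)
Your ``alternative'' argument in the final paragraph is precisely the paper's proof, and it is the route that works: by Lemma 5.1 of \cite{Gubeladze-unimodularMR3853049}, monicity of $u_{2n}$ in $m$ makes the extension $R[\Gamma]_{\m} \rightarrow R[M]_{\m}/(u_{2n})$ integral, hence $R[M]_{\m}/(u_{2n})$ is semilocal; the semilocal clause of Proposition \ref{p3} (which needs only $n \geq 2$) trivialises the reduced row; one then lifts an elementary matrix realizing this triviality back to $R[M]_{\m}$, cleans up the last two coordinates, and finishes with $se_{12}(-u_{2n}(\alpha u_{2n}+\beta))$ in the symplectic case, while in the orthogonal case isotropy forces $u_{2n}(\alpha u_{2n}+\beta)=0$. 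You correctly anticipated both the integrality mechanism and the isotropy subtlety, so this part of your proposal matches the paper's argument essentially step for step.

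Your primary route, however, has a genuine gap, and it is exactly the ``translation step'' you flagged as the main obstacle. The pyramidal decomposition $M = M(\Delta)\cup M(\Gamma)$ is a union of submonoids, not a splitting $M \cong \pz{}\oplus M(\Gamma)$; consequently $R[M]_{\m}$ is not a polynomial extension of $B = R[M(\Gamma)]_{\m}$, and no localization makes it one (for $\m = (\mu, M(\Gamma)\setminus\{0\})$ a dimension count already rules it out: $\dim R[M]_{\m} \leq \dim R + \rank(M)$, whereas $B[t_1,\ldots,t_r]$ has dimension $\dim B + r$ with $\dim B = \dim R + r$ in the non-degenerate case). Theorem \ref{tac} does not repair this: its maps $\pi_i$ are surjections from $R[\pz{r}]$ onto rings \emph{containing} $R[M]$, and that machinery is used in Proposition \ref{plam} to \emph{arrange} monicity of the last coordinate before this lemma is invoked, not to identify $R[M]_{\m}$ with a polynomial ring afterwards. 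The correct substitute for Horrocks in the monoid setting is exactly Gubeladze's integrality lemma, i.e.\ the semilocal argument. A secondary defect of the Horrocks route is that it imports the hypothesis $2n \geq \MD(R)$, which the lemma does not assume: the statement is needed, and proved, for all $n \geq 2$ with no condition involving $\dim R$, precisely because semilocality of the quotient makes the dimension of the base irrelevant.
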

	
	\begin{proof}
		By Lemma 5.1 of \cite{Gubeladze-unimodularMR3853049},  since $u_{2n}$ is a monic in $m,$ the extension
		\[R[\Gamma]_{\m} \rightarrow R[M]_{\m}/(u_{2n}) \text{ is integral for each } \m \in {\M}({R[\Gamma]}). \]
		
		\noindent Since $R[M]$ is noetherian, we have $R[M]_{\m}/(u_{2n})$ is semi-local. Let $I = R[M]_{\m}( u_{2n})$. and denote by $``-",$ reduction modulo $I$.  By Proposition \ref{pl}, ${\bar{u}}_{\m} \sim e_1$.  We would like to show that transitive action remains invariant modulo $I$.
		
		Let $\bar{\sigma} \in {\E}(2n,R[M]_{\m}/I)$ be such that $\bar{u}\bar{\sigma} = e_1$. Lift $\bar{\sigma}$ to $\sigma \in {\E}(2n,R[M]_{\m})$ such that
		\begin{align*}
			u\sigma  = (u'_1, \ldots, u'_{2n-2}, u'_{2n-1}, u_{2n}),
		\end{align*}
		where $(u'_1, \ldots, u'_{2n-1}) \equiv e_1~mod~I$ and $u_i \in R[M]_{\m}$. Thus we can find appropriate $\tau \in {\E}(2n,R[M]_{\m})$ such that 
		\begin{align*}
			v\tau &= (1,0,0,0,\ldots,\alpha u_{2n} + \beta, u_{2n})\\
			&\sim (1,u_{2n}(\alpha u_{2n} + \beta),0,0,\ldots,0,u_{2n})\\
			&\sim (1,u_{2n}(\alpha u_{2n} + \beta),0,0,\ldots,0,0)
		\end{align*}
		for some $\alpha, \beta \in R[M]_{\m}$. In the symplectic setup, on applying $se_{12}(-u_{2n}(\alpha u_{2n} + \beta)),$ we get $u_{\m} \sim e_1$. For the orthogonal case by the isotropic property, we have
		\[u_{2n}(\alpha u_{2n} + \beta) = 0,\]
		indicating the required.
	\end{proof}

	\begin{pr}\label{plam}
		Let R be a ring and assume patching squares as in Theorem \ref{tac}. Let $M=M(\Delta) \cup M(\Gamma)$ a pyramidal decomposition w.r.t. the extremal generator $m \in M$.  If ${\Um}(2n,M_1)/{{\E}(2n, M_1)} = \{*\}$, then $u_{\m} \simm{{\E}(2n,R[M]_{\m})} e_1,$ for every $ u \in {\Um}(2n, R[M])$ and $ \m \in {\M}(R[M(\Gamma)])$.
	\end{pr}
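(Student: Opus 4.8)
The plan is to reduce to the monic situation already handled in Lemma \ref{llocal}. Concretely, it suffices to produce $w' \simm{{\E}(2n,R[M])} u$ whose last coordinate $w'_{2n}$ is monic in $m$, i.e. $\mathcal{H}(w'_{2n}) = m^{c}$ for the pyramidal degree attached to $M = M(\Delta)\cup M(\Gamma)$; then Lemma \ref{llocal} (with unit $r=1$) gives $w'_\m \sim e_1$ over $R[M]_\m$ for every $\m\in{\M}(R[M(\Gamma)])$, and localizing the global equivalence $u \sim w'$ yields $u_\m \sim e_1$. By the reductions of Section 4 we may assume $R$ is local, so the tilted-algebra results of this section are available.

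First I would descend $u$ across the back cartesian square of Theorem \ref{tac}, whose top-left corner is the patching $A_1$, whose right vertical map is the inclusion $g_1\colon R[M]\hookrightarrow R[M_1]$, and whose bottom horizontal map $\pi_1$ is surjective. The hypothesis ${\Um}(2n,M_1)/{\E}(2n,M_1)=\{*\}$ forces $g_1(u)\sim e_1$ over $R[M_1]$. Since $\pi_1$ is surjective, Corollary \ref{cmain} applies to this square and produces $v\in{\Um}(2n,A_1)$ with $f_1(v)\simm{{\E}(2n,R[M])} u$, where $f_1\colon A_1\to R[M]$ is the horizontal map. Thus $u$ is, up to elementary equivalence, the image of a unimodular row over the patching $A_1$.

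Next I would carry $v$ into the $t_1$-tilted corner along the cube map $b\colon A_1\to A_2$, obtaining $b(v)\in{\Um}(2n,A_2)$; commutativity of the top face of the cube identifies $f_1(v)$ with the image $e_2(b(v))$ under the induced embedding $e_2\colon A_2\hookrightarrow R[M]$. Because $A_2\subseteq R[t_1,t_1^{k_2}t_2,\ldots]\simeq R[t_1,\ldots,t_r]$ is $t_1$-tilted and the specialization $t_1=0$ is a ring homomorphism $A_2\to R$, the restriction $b(v)\vert_{t_1=0}$, being the image of a unimodular row, lies in ${\Um}(2n,R)$. Hence Corollary \ref{c7.4} applies over $A_2$ and yields $w\simm{{\E}(2n,A_2)} b(v)$ with $\mathcal{L}(w_{2n})=1$. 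Setting $w':=e_2(w)$ and using that $e_2$ carries elementary equivalences to elementary equivalences, we obtain $w'\simm{{\E}(2n,R[M])} e_2(b(v)) = f_1(v)\sim u$; moreover, since $e_2$ sends $t_1$ to $m$, the $t_1$-leading coefficient $1$ of $w_{2n}$ becomes the pyramidal leading coefficient of $w'_{2n}$, so $\mathcal{H}(w'_{2n})=m^{c}$.

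With $w'_{2n}$ monic in $m$ and $w'\sim u$ over $R[M]$, Lemma \ref{llocal} gives $w'_\m\sim e_1$ over $R[M]_\m$ for every $\m\in{\M}(R[M(\Gamma)])$, and the global equivalence localizes to $u_\m\sim e_1$, as claimed. The orthogonal isotropy requirement needs no separate treatment, being built into Corollary \ref{c7.4} and Lemma \ref{llocal}. The step I expect to demand the most care is the degree identification in the third paragraph: one must check, using the construction underlying Theorem \ref{tac}, that the filtration of the tilted algebra $A_2\subseteq R[t_1,\ldots,t_r]$ with respect to which $\mathcal{L}$ is computed in Corollary \ref{c7.4} corresponds under $t_1\mapsto m$ to the pyramidal degree on $R[M]$, so that ``leading coefficient $1$ in $t_1$'' genuinely becomes ``monic in $m$''; chaining the three equivalences $u\sim f_1(v)\sim w'$ likewise rests on the commutativity of the cube.
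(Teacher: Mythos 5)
Your proof tracks the paper's argument faithfully up through the application of Corollary \ref{c7.4}: the reduction to producing a row whose last coordinate is monic in $m$ (so that Lemma \ref{llocal} applies), the descent of $u$ to $v \in {\Um}(2n,A_1)$ via Corollary \ref{cmain} using the hypothesis over $M_1$, and the passage to the $t_1$-tilted algebra $A_2$ where $v\vert_{t_1=0} \in {\Um}(2n,R)$ lets you invoke Corollary \ref{c7.4} — all of this is exactly what the paper does. The gap is the step you yourself flagged as delicate, and it is not a bookkeeping issue about filtrations: it is false that $\mathcal{L}(w_{2n})=1$ gives $\mathcal{H}(w'_{2n})=m^c$. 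Corollary \ref{c7.4} only makes the \emph{coefficient} of the leading term a unit; the leading term $\mathcal{H}(w_{2n})$ itself is an arbitrary monomial in $t_1,\ldots,t_r$ (in the proof of Corollary \ref{c7.4} it arises as $t_1^c\,\mathcal{H}(f)$, and $\mathcal{H}(f)$ has no reason to be a power of $t_1$). Under $\pi_2$ such a monomial maps to a unit times a mixed product of monoid elements, not a unit times a power of $m$, so the row you hand to Lemma \ref{llocal} need not satisfy its monicity hypothesis.

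The paper spends the entire second half of its proof bridging exactly this gap. It first trivializes $v$ over the simplex subring $R[\pz{r}(\Delta)] \subseteq A_2$ using Proposition \ref{pl}, writing $v = e_1\sigma$, and then applies the Nagata-type endomorphism $\tau: t_j \mapsto t_j + t_1^c$ ($j \geq 2$), which for $c \gg 0$ converts ``unit leading coefficient'' into ``monic in $t_1$'' — this substitution is what actually forces the leading term to become a pure power of $t_1$. Since $\tau(v)$ is not elementarily equivalent to $v$ a priori, the paper forms $\beta = \sigma^{-1}\tau(\sigma) \in {\E}(2n,A_2)$ and uses the splitting lemma of Basu--Rao (\cite{Basu-Rao-MR2578583}, Lemma 3.6) to write $\beta = \sigma'\,\mathrm{diag}(\beta',1,1)$ with $\sigma'$ elementary; then $w := \tau(v)\,\mathrm{diag}(\beta'^{-1},1,1) = v\sigma' \simm{{\E}(2n,A_2)} v$, while the block structure leaves the last coordinate untouched, so $w_{2n} = \tau(v_{2n})$ is monic in $t_1$ and $\pi_2(w)_{2n}$ is monic in $m$. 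Without this substitution-plus-splitting mechanism your argument cannot produce the monic row that Lemma \ref{llocal} requires.
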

	
	\begin{proof}
		Let $ u \in {\Um}(2n, R[M])$. By Lemma \ref{llocal}, it sufficient to show that
		\[u \sim u' = (u'_1, \ldots, u'_{2n}) \in {\Um}(2n,R[M]),\]
		such that $u'_{2n}$ is a monic with respect to the associated pyramidal degree. 
		
		Since $u \in {\Um}(2n,R[M]) \subseteq {\Um}(2n,M_1),$ by Corollary \ref{cmain},  there exists $v \in {\Um}(2n,A_1)$ such that $\pi_1(v) = u$. Given the slanted arrows in the patching diagram in Theorem \ref{tac} are embeddings and 
		\[A_2 \subseteq R[t_1, t_1^{k_2}t_2, \ldots, t_1^{k_r}t_r] \simeq R[t_1, \ldots,t_r]\]
		is a $t_1$-tilted algebra, we have $v\vert_{t_1=0} \in {\Um}(2n,R)$. We may assume $\mathcal{L}(v_{2n}) \in U(R),$ by Corollary \ref{c7.4}. For $k \gg 0,$ define a simplex $\Delta,$ generated as a convex combination of $\phi(t_1)$ and $\phi(t_1^kt_i)$'s for $2 \leq i \leq r$.\vp 
		
		Observe that $\pz{r} \simeq \pz{r}(\Delta) \subseteq \pi_2^{-1}(M),$ and hence by Proposition \ref{pl}, 
		\[v \simm{{\E}(2n,R[\pz{r}(\Delta)])} e_1.\]
		Choose $\sigma \in {\E}(2n,R[\pz{r}(\Delta)])$ such that $v = e_1\sigma$. For any $c \in \pz{},$ one may define $\beta = \sigma^{-1}\tau(\sigma),$ where $\tau$ is the Nagata endomorphism on $R[t_1, \ldots, t_r]$ defined by
		\[\tau(t_j) \mapsto t_j + t_1^c \text{ for } 2 \leq j \leq r.\]
		
		As $\pz{r}(\Delta) \subseteq \pi_2^{-1}(M)$ and $A_2 = R[\pi_2^{-1}(M) + ker(\pi_2)],$ we have $R[\pz{r}(\Delta)] \subseteq A_2$. Therefore for large values of $c,$ $\beta = \sigma^{-1}\tau(\sigma) \in {\E}(2n,A_2)$. 
		
		By Proposition \ref{pl} and (\cite{Basu-Rao-MR2578583}, Lemma 3.6) there exists $\sigma' \in {\E}(2n,\pz{r}(\Delta))$ and $\beta' \in {\G}(2n-2,R[\pz{r}(\Delta)])$ such that 
		\[ \beta = \sigma'\begin{pmatrix}
			\beta' & 0 & 0 \\
			0 & 1 & 0 \\
			0 &0 & 1
		\end{pmatrix}\]
		
		\noindent Define \begin{align*}
			w = (w_1, \ldots, w_{2n-1}, \tau{(v_{2n})}) : = \tau(v)&\begin{pmatrix}
				\beta'^{-1} & 0 & 0 \\
				0 & 1 & 0 \\
				0 &0 & 1
			\end{pmatrix}\\
			= v\beta& \begin{pmatrix}
				\beta'^{-1} & 0 & 0 \\
				0 & 1 & 0 \\
				0 &0 & 1
			\end{pmatrix} = v\sigma' \simm{{\E}(2n,A_2)} v.
		\end{align*} 
		Since $\pi_2(t_1)=m,$we choose $c\gg0,$ so that $\pi_2(\tau(v_{2n}))$ is a monic in $m$ and thus we may conclude the proof by observing $u \simm{{\E}(2n,R[M])} \pi_2(w).$
	\end{proof}
	\noindent Given a monoid $M$ with complexity $k(M),$ henceforth we appoint as $Q,$ the $k(M)-1$ dimensional polytope such that
	\[\phi(M) = \conv(m_1, \ldots, m_{\rank(M)-k(M)}, Q).\]
	\noindent For an polytope $Q' \subseteq Q,$ define:
	\[\widetilde{Q'} := \conv(m_1, \ldots, m_{\rank(M)-k(M)}, Q')\]

	\begin{lm}\label{karoubi}
		Let $(R, \mu)$ be a local ring and Theorem \ref{main} be true for $R[N],$ for $N \in \mathcal{N}$ with $k(N) < k(M)$. For $Q$ as above, if $P \subseteq Q$ is a polytope with the decomposition $M(P) = M(\delta) \cup M(\gamma)$. Then for $u \in {\Um}(2n,R[M(\widetilde{P})_*]),$ there exists $v \in {\Um}_n(R[M(\widetilde{\gamma})_*]),$ such that $u \simm{{\E}(2n,R[M(\widetilde{P})_*])}v$.
	\end{lm}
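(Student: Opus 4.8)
The plan is to peel the pyramid $\widetilde\delta$ off $\widetilde P$ by globalising the local triviality furnished by Proposition \ref{plam}. First I would fix the geometry. Since $M(P)=M(\delta)\cup M(\gamma)$ is a pyramidal decomposition of $M(P)$ with respect to an extremal generator $m'$ (the apex of $\delta$), and since coning over the fixed apexes $m_1,\ldots,m_{\rank(M)-k(M)}$ preserves this structure, one obtains a pyramidal decomposition $M(\widetilde P)=M(\widetilde\delta)\cup M(\widetilde\gamma)$ in the sense of Definition \ref{pdec}, with the same apex $m'$ (still an extremal generator of $M(\widetilde P)$) and separating face $H=\widetilde\delta\cap\widetilde\gamma\subseteq\widetilde\gamma$. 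Grading $R[M(\widetilde P)_*]$ by the multiplicity of the apex generator $m'$ turns it into a positively graded $R[M(\widetilde\gamma)_*]$-algebra whose degree-zero part is $A_0=R[M(\widetilde\gamma)_*]$ (up to the contribution of the base face $H$, which already lies in $\widetilde\gamma$). In particular the projection $r\colon R[M(\widetilde P)_*]\to A_0$ killing the positive-degree part is a ring map, so $v:=r(u)\in\Um(2n,R[M(\widetilde\gamma)_*])$ is automatically unimodular, and the whole problem reduces to proving $u\sim v$ over $R[M(\widetilde P)_*]$.

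Next I would check the hypothesis of Proposition \ref{plam}, namely $\Um(2n,M_1)/\E(2n,M_1)=\{*\}$ for the monoid ring $M_1$ occurring in the patching diagram of Theorem \ref{tac} attached to $m'$. The geometric input is that this construction contracts the apex direction $m'$, making $m'$ a simplicial apex of $M_1$ that drops out of the complexity-bearing polytopal part; hence $M_1\in\mathcal N$ with $k(M_1)<k(M)$. The standing hypothesis of the lemma --- that Theorem \ref{main} holds for every $N\in\mathcal N$ with $k(N)<k(M)$ --- then supplies the required transitivity over $R[M_1]$. With this in hand, the interior-adapted form of Proposition \ref{plam} gives $u_\m\sim e_1$ for every $\m\in{\rm Max}(A_0)$; reducing modulo the positive-degree part also gives $(r(u))_\m\sim e_1$, so that $u_\m\sim (r(u))_\m$ over each localisation $R[M(\widetilde P)_*]_\m$.

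Finally I would globalise. Because $r(u)=v$ lies in $A_0$, we have $u\equiv v$ modulo the augmentation (positive-degree) ideal, and the local equivalences $u_\m\sim v_\m$ for all $\m\in{\rm Max}(A_0)$ are exactly the data needed to invoke the graded local--global principle, Proposition \ref{tlg} (having first normalised $u$ on the degree-zero slice by Proposition \ref{pl}). This patches the local transformations into a single $\sigma\in\E(2n,R[M(\widetilde P)_*])$ with $u\sigma=v$, and $v\in\Um(2n,R[M(\widetilde\gamma)_*])$ is the desired row. Corollary \ref{cmain}, applied to the Milnor square separating $\widetilde\delta$ from $\widetilde\gamma$ along $H$, is what guarantees that the patched transformation can be realised over $R[M(\widetilde P)_*]$ itself rather than merely over its localisations.

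The main obstacle, I expect, is the complexity count of the second paragraph: one must prove cleanly that contracting the extremal generator $m'$ strictly decreases $k$ while keeping the monoid affine, positive and normal, so that the induction hypothesis is genuinely available for $M_1$. A secondary but pervasive difficulty is compatibility with the interior operation $(-)_*$: the set $M(\widetilde P)_*$ does not split as $M(\widetilde\delta)_*\cup M(\widetilde\gamma)_*$, since the relative interior of the separating face $H$ supplies interior points of $\widetilde P$ that are only boundary points of $\widetilde\gamma$. Consequently both the grading with $A_0=R[M(\widetilde\gamma)_*]$ and Proposition \ref{plam} must be read in their interior-adapted forms, with the face-$H$ contribution tracked throughout.
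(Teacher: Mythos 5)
Your proposal correctly locates where the complexity hypothesis enters (verifying $\Um(2n,M_1)/\E(2n,M_1)=\{*\}$ so that Proposition \ref{plam} can be applied), but the mechanism you build around it has two genuine breakdowns. First, the grading you propose does not exist. The pyramidal degree attached to $M(\widetilde{P})=M(\widetilde{\delta})\cup M(\widetilde{\gamma})$ is a linear functional vanishing on $H=\widetilde{\delta}\cap\widetilde{\gamma}$, \emph{negative} on the $\widetilde{\gamma}$-side and positive on the $\widetilde{\delta}$-side; since $M(\widetilde{P})_*$ contains interior points on both sides of the hyperplane through $H$ (the decomposition being non-degenerate), $R[M(\widetilde{P})_*]$ is $\mathbb{Z}$-graded with both signs. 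Its degree-zero part consists of the interior monomials lying over $H$, not $R[M(\widetilde{\gamma})_*]$, and the map $r$ killing positive-degree monomials is not a ring homomorphism: a product of a positive-degree and a negative-degree monomial can land in degree zero, so the positive part is not an ideal. Hence $v:=r(u)$ is not a well-defined unimodular row, and Proposition \ref{tlg} cannot be invoked — it requires a \emph{positively} graded algebra together with the normalisation $u\vert_{A_0}=e_{2n}$, and achieving that normalisation would itself require transitivity over $R[M(\widetilde{\gamma})_*]$, which is not available at this stage: $k(M(\widetilde{\gamma}))$ need not be smaller than $k(M)$, and Proposition \ref{pl} (free monoids, semilocal rings) does not apply to it.

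Second, you treat the mismatch between the interior ring $R[M(\widetilde{P})_*]$ and Proposition \ref{plam} (which is stated for full monoid rings $R[M]$) as a secondary bookkeeping issue, but it is the crux, and the paper resolves it with a device absent from your proposal: a homothety. Since $u$ involves only finitely many interior monomials, one chooses $z\in\intt(\widetilde{\gamma})$ and a rational $c\in(0,1)$ so that $u\in\Um(2n,R[M(\theta_c(z)(\widetilde{P}))])$, where $\theta_c(z)$ is the homothety about $z$; Proposition \ref{plam} (together with Proposition 8.4 of Gubeladze) then applies verbatim to the shrunken \emph{full} polytopal monoid $M(P')$, $P'=\theta_c(z)(\widetilde{P})$. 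Moreover, because $R$ is local with maximal ideal $\mu$, only the single maximal ideal $\mathfrak{m}=(\mu,\,M(\gamma')\smallsetminus\{0\})$ is ever used — no local--global patching over all of $\Max(A_0)$ occurs. The globalisation is done instead by Gubeladze's Lemma 8.2, which supplies a Karoubi square with top row $R[M(\widetilde{\gamma})_*]\rightarrow R[M(\widetilde{P})_*]$ and bottom row their localisations at $\mathfrak{n}=(\mu,\,M(\widetilde{\gamma})\smallsetminus\{0\})$; Corollary \ref{cmain}, fed through the inclusion $R[M(P')]_{\mathfrak{m}}\subseteq R[M(\widetilde{P})_*]_{\mathfrak{n}}$, then produces the desired $v$ over $R[M(\widetilde{\gamma})_*]$. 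Your ``Milnor square separating $\widetilde{\delta}$ from $\widetilde{\gamma}$ along $H$'' is not a square available in this paper, and without the homothety and the Karoubi square your argument does not close.
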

	
	\begin{proof}
		Let $z \in \intt(\widetilde{\gamma})$ and $\theta_c(z)$ denote the homothetic transformation about $z$ with radius $c \in (0,1) \cap \mathbb{Q}$. We may choose a $c$ such that $u \in {\Um}(2n,R[M(\theta_c(z)(\widetilde{P}))])$. Let $P' = \theta_c(z)(\widetilde{P})$ and $\gamma' = \theta_c(z)(\widetilde{\gamma})$.
		
		By Proposition 8.4 of \cite{Gubeladze-unimodularMR3853049} 
		and Proposition \ref{plam} yields $u_{\m} \simm{{\E}(2n,R[M(P')]_{\m})} e_1$ for $\m = (\mu, M(\gamma')-\{0\}) \in {\M}{(R[M(\gamma')])}$ (though implicit here, this is the step where the complexity restriction of the hypothesis is used). Let $\mathfrak{n} = (\mu, M(\widetilde{\gamma}) - \{0\}) \in \Max(R[M(\widetilde{\gamma})])$. Using Lemma 8.2 in \cite{Gubeladze-unimodularMR3853049}, we have the Karoubi square
		\begin{figure}[H]
			\[
			\begin{tikzcd}[row sep=2em, column sep=2.5em]
				R[M(\gamma)_*] \arrow[rr,"i_1"] \arrow[dd,swap,""] &&
				R[M(P)_*] \arrow[dd,"\pi_i"] \\
				&  \\
				R[M(\gamma)_*]_{\mathfrak{n}} \arrow[rr,swap, "i'_1"] &&R[M(P)_*]_{\mathfrak{n}}.
			\end{tikzcd}
			\]
		\end{figure}
		\noindent From Corollary \ref{cmain} owing to the inclusion 
		\[R[M(P')]_{\m} \subseteq R[M(\widetilde{P})_*]_{\mathfrak{n}},\]
		there exists $u \in R[M(\gamma)_*]$ such that $v \simm{{\E}(2n,R[M(P)_*])} u$.
	\end{proof}
	
	\noindent Now our main result follows seamlessly using induction on $k(M)$. \vp 
	
	\textit{Proof of Theorem \ref{main}:}
	\noindent As observed in Proposition \ref{ttsoh1}, the burden of transitive action shifts to the interior monoid algebra $R[M_*].$ Let $u \in {\Um}(2n,R[M_*])$.
	We prove this theorem by inducting on $k:=k(M)$. Consider the induction hypothesis for $0 \leq t \leq k$:\vp  
	
	\noindent $(\mathbb{H}_t):$ If $N \in \mathcal{N}$ and $t=k(N)$, then
	\[\frac{{\Um}(2n,R[N_*])}{{\E}(2n,R[N_*])}= \{*\} \]
	
	\noindent For the base case ($\mathbb{H}_0$) observe that if $N \in \mathcal{N}$ with $k(N)=0,$ then $N$ is a simplex. Consequently, $N_*$ is a filtered limit of $\phi$-simplicial monoids, and we are done by the first half of the proof of Theorem \ref{ttsoh2}. \vp
	
	\noindent Let $k>0$. Consider $Q,$ the $k-1$ dimensional polytope such that
	\[\phi(M) = \conv(m_1, \ldots, m_{\rank(M)-k}, Q).\]
	Let $\Delta$ be a simplex such that $\intt(\Delta) \subseteq Q$ and $u \in {\Um}(2n,R[M(\widetilde{\Delta})_*])$. Corresponding to this neighbourhood, by Lemma 8.3 of \cite{Gubeladze-unimodularMR3853049}, we may find a sequence of polytopes $\{Q_i\}_{i \in \pz{}} \subseteq Q$ and a stage $i,$ such that for $j \geq i$ we have $Q_i \subseteq \Delta$. If the hypothesis of Lemma \ref{karoubi} is satisfied, then there exists $v \in {\Um}(2n,R[M(\widetilde{Q_i})_*])$ such that \[ u \simm{{\E}(2n,R[M(\widetilde{Q_i})_*])} v.\] Without loss of generality we may assume $Q_{i} = \Delta$. By observing that $\widetilde{Q_i}$ is again a simplex, we use $\mathbb{H}_0$ to get $v \simm{{\E}(2n,R[M(\widetilde{Q_i})_*])} e_1$. Now we prove the conditions required to use Lemma \ref{karoubi}. \vp
	
	\textit{ Claim}: Let $R$ be a local ring, then Theorem \ref{main} is true for $R[N],$ where $N \in \mathcal{N}$ with $k(N)<k=k(M)$.\vp
	
	\noindent If $F$ is a face of $\phi(N),$ then $k(M(F))\leq k(N)<k$ and therefore by induction 
	\[\frac{{\Um}(2n,R[M(F)_*])}{{\E}(2n,R[M(F)_*])}= \{*\}.\]
	The remark following Proposition \ref{ttsoh1} implies the indicated claim.
	\qed

	\section{Surjective Stabilization of $\k$-group}
	
	The surjective stabilization of the $\k$ functor of classical groups is a natural consequence of the transitive action of their elementary subgroups on unimodular rows. In this section, using a trick for the excision ring, we deduce a relative version of the main result. Then, as a consequence, we find the surjective stabilization bound for the respective $\k$-groups.

	\begin{de}
		\tn{Given an ideal $I \subseteq R,$ one defines the excision ring $R \oplus I$ with the following operations}
		\begin{align*}
			(r_1, i_1)+(r_2, i_2) &= (r_1 + r_2, i_1 + i_2),\\
			(r_1, i_1)\cdot (r_2, i_2) &= (r_1r_2, r_1i_2+r_2i_1+i_1i_2),
		\end{align*}
		where $r_j \in R$ and $i_j \in I$ for all $i$.
	\end{de}
	There exists a natural homomorphism $\phi: R \oplus I \rightarrow R,$ given by $\phi(r,i) = r+i$. Also $\dim(R \oplus I) = \dim(R)$ and, $R \oplus I$ is noetherian, if $R$ is noetherian. 
	We state some results that are required.	
	
	\begin{lm}\label{le}
		(\cite{Basu-Pillar}, Lemma 2.13, 2.14) Let R be a commutative ring and $I \subset R$ an ideal. If $\alpha \in {\E}(k, R, I),$ then there exists $\widetilde{\alpha} \in {\E}(k,R \oplus I)$ such that $\phi(\widetilde{\alpha}) = \alpha$. Further, the converse holds too.
	\end{lm}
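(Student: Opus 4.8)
The plan is to exploit the fact that the excision ring carries, besides the ``sum'' map $\phi$, a projection and a common section, and to transport the defining generators of the relative elementary subgroup back and forth along these three maps. First I would record the three ring homomorphisms attached to $R\oplus I$: the projection $p\colon R\oplus I\to R$, $p(r,i)=r$; the sum map $\phi\colon R\oplus I\to R$, $\phi(r,i)=r+i$; and the section $s\colon R\to R\oplus I$, $s(r)=(r,0)$. A direct check from the given multiplication rule shows all three are ring homomorphisms, that $p\circ s=\phi\circ s=\mathrm{id}_R$, and that $\widetilde I:=0\oplus I=\ker p$ is an ideal with $\phi(0,\lambda)=\lambda$ and $p(0,\lambda)=0$ for $\lambda\in I$. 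Equivalently, $(r,i)\mapsto(r,r+i)$ identifies $R\oplus I$ with the fibre product $R\times_{R/I}R$, with $p,\phi$ the two projections and $s$ the diagonal. The only feature I actually use is that applying any of $s,p,\phi$ entrywise to an elementary generator $ge_{ij}(\lambda)$ yields $ge_{ij}$ of the image of $\lambda$; this holds uniformly for the symplectic long-root generators $e_{i\sigma(i)}(\lambda)$ as well as for $se_{ij}(\lambda)$ and $oe_{ij}(\lambda)$, since all of their entries are ring-homomorphism-stable expressions ($0,1,\pm\lambda,\pm(-1)^{i+j}\lambda$) in the parameter.

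For the forward implication I would use the definition of ${\E}(k,R,I)$ as the normal closure of ${\E}(k,I)$ in ${\E}(k,R)$, so that $\alpha=\prod_t \varepsilon_t\, ge_{i_tj_t}(\lambda_t)\,\varepsilon_t^{-1}$ with $\varepsilon_t\in{\E}(k,R)$ and $\lambda_t\in I$. I would then set $\widetilde\alpha:=\prod_t \widetilde\varepsilon_t\, ge_{i_tj_t}(0,\lambda_t)\,\widetilde\varepsilon_t^{-1}$, where $\widetilde\varepsilon_t\in{\E}(k,R\oplus I)$ is obtained by applying $s$ to each elementary generator of $\varepsilon_t$, and $(0,\lambda_t)\in\widetilde I$. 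Since $\phi$ is a ring homomorphism with $\phi\circ s=\mathrm{id}_R$ and $\phi(0,\lambda_t)=\lambda_t$, applying $\phi$ entrywise gives $\phi(\widetilde\alpha)=\alpha$; since $p\circ s=\mathrm{id}_R$ and $p(0,\lambda_t)=0$, one simultaneously has $p(\widetilde\alpha)={\rm Id}_k$. In fact $\widetilde\alpha$ is by construction a product of ${\E}(k,R\oplus I)$-conjugates of generators of ${\E}(k,\widetilde I)$, so $\widetilde\alpha\in{\E}(k,R\oplus I,\widetilde I)$, the normal closure of ${\E}(k,\widetilde I)$; this sharper conclusion is what makes the converse clean.

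For the converse I would take $\widetilde\alpha$ in the relative subgroup ${\E}(k,R\oplus I,\widetilde I)$, which is exactly where the lifts just produced live and is the correct hypothesis: bare membership in ${\E}(k,R\oplus I)$ cannot suffice, since $\phi$ already surjects onto all of ${\E}(k,R)$, so one must pass through $\ker p=\widetilde I$. Writing $\widetilde\alpha=\prod_t\widetilde\varepsilon_t\, ge_{i_tj_t}(0,\lambda_t)\,\widetilde\varepsilon_t^{-1}$ with $\widetilde\varepsilon_t\in{\E}(k,R\oplus I)$ and $\lambda_t\in I$, and applying $\phi$, I obtain $\phi(\widetilde\alpha)=\prod_t\phi(\widetilde\varepsilon_t)\, ge_{i_tj_t}(\lambda_t)\,\phi(\widetilde\varepsilon_t)^{-1}$ with each $\phi(\widetilde\varepsilon_t)\in{\E}(k,R)$ and each $\lambda_t\in I$. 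This is manifestly a product of ${\E}(k,R)$-conjugates of generators of ${\E}(k,I)$, hence lies in ${\E}(k,R,I)$.

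I expect the work to be bookkeeping rather than a genuine obstacle. The two points needing care are pinning down the precise relative subgroup on the excision side (so that the converse is stated correctly), and verifying that every flavour of elementary generator—in particular the symplectic long-root generators and the sign-twisted $se_{ij},oe_{ij}$—is functorial under $s,p,\phi$. Both are routine once the identification $R\oplus I\cong R\times_{R/I}R$ and the splitting $s$ are in place, and together they give that $\phi$ maps ${\E}(k,R\oplus I,\widetilde I)$ onto ${\E}(k,R,I)$, which is the content of the lemma.
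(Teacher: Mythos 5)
Your proof is correct, but there is nothing in the paper to measure it against: the paper does not prove Lemma \ref{le} at all, it imports it from \cite{Basu-Pillar}. What you wrote is the standard excision-ring argument, and both directions are sound: every generator type ($e_{i\sigma(i)}(\lambda)$, $se_{ij}(\lambda)$, $oe_{ij}(\lambda)$) is functorial under $p$, $\phi$, $s$ since its entries are $0$, $1$, $\pm\lambda$; the lift $\widetilde\alpha=\prod_t s(\varepsilon_t)\,ge_{i_tj_t}\bigl((0,\lambda_t)\bigr)\,s(\varepsilon_t)^{-1}$ satisfies both $\phi(\widetilde\alpha)=\alpha$ and $p(\widetilde\alpha)={\rm Id}_k$; and pushing such an expression forward along $\phi$ lands in ${\E}(k,R,I)$.

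Your reformulation of the converse is a necessary correction, not a deviation. Since $\phi\circ s={\rm id}_R$, the map $\phi$ carries ${\E}(k,R\oplus I)$ onto all of ${\E}(k,R)$, so the converse fails as literally stated: for proper $I$, $\phi\bigl(s(ge_{13}(1))\bigr)=ge_{13}(1)\notin{\E}(k,R,I)$, as elements of ${\E}(k,R,I)$ are congruent to ${\rm Id}_k$ modulo $I$. The correct hypothesis is, as you say, $\widetilde\alpha\in{\E}(k,R\oplus I,0\oplus I)$, or equivalently $p(\widetilde\alpha)={\rm Id}_k$; the equivalence is one routine step your setup already supplies, namely additivity of generators in the parameter, $ge_{ij}\bigl((a,\lambda)\bigr)=ge_{ij}\bigl((a,0)\bigr)\,ge_{ij}\bigl((0,\lambda)\bigr)$, plus the usual conjugation rearrangement, which shows that any $\widetilde\alpha\in{\E}(k,R\oplus I)$ killed by $p$ lies in ${\E}(k,R\oplus I,0\oplus I)$. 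One caveat on downstream use: the proof of Theorem \ref{rel} invokes the ``converse'' for a bare $\sigma'\in{\E}(2n,A\oplus I)$ produced by Theorem \ref{main}, which your corrected statement does not cover as such; to run that argument one should lift $v$ to $v'$ with $p(v')=e_1$ and $\phi(v')=v$ (rather than $v'=(v_i,0)$, which has $p(v')=v$), after which $\sigma'$ can be normalized by $s(p(\sigma'))^{-1}$ to have trivial image under $p$, and your form of the converse applies.
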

	
	\begin{lm}\label{lem}
		Let $I' \subseteq R$ be an ideal and $I = I'R[N],$ where $N$ is a monoid. Then 
		\[R[N] \oplus I \simeq (R \oplus I')[N].\]
	\end{lm}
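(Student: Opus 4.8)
The plan is to exhibit an explicit $R[N]$-algebra isomorphism between the two monoid-ring constructions by matching their underlying additive structures and then checking multiplicativity. First I would unwind both sides as free $R[N]$-modules. The left side $R[N] \oplus I$, where $I = I'R[N]$, is by definition the excision ring built on the pair $(R[N], I'R[N])$: its elements are pairs $(f, g)$ with $f \in R[N]$ and $g \in I'R[N]$, with the twisted multiplication $(f_1,g_1)\cdot(f_2,g_2) = (f_1f_2,\, f_1g_2 + f_2g_1 + g_1g_2)$. The right side $(R \oplus I')[N]$ is the monoid ring over the excision ring $R \oplus I'$, so its elements are finite sums $\sum_{n} (r_n, i_n)\, n$ with $(r_n,i_n) \in R \oplus I'$.

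The key step is to define the candidate map $\Psi\colon (R \oplus I')[N] \to R[N] \oplus I$ on generators by
\[
\Psi\Big( \sum_n (r_n, i_n)\, n \Big) = \Big( \sum_n r_n n,\; \sum_n i_n n \Big),
\]
noting that $\sum_n i_n n \in I'R[N] = I$ since each $i_n \in I'$. This is manifestly an $R[N]$-module (indeed abelian group) isomorphism, because grouping an element of $R[N] \oplus I$ coefficient-by-coefficient over $N$ recovers a unique preimage; bijectivity is immediate from the freeness of both modules on the basis $N$. I would then verify that $\Psi$ respects the identity and, crucially, multiplication. For multiplicativity it suffices to check on monomials $(r_1,i_1)\,n_1$ and $(r_2,i_2)\,n_2$: the product in $(R\oplus I')[N]$ is $(r_1r_2,\, r_1i_2 + r_2i_1 + i_1i_2)\,(n_1n_2)$, whose image under $\Psi$ is $\big( r_1r_2\,n_1n_2,\; (r_1i_2+r_2i_1+i_1i_2)\,n_1n_2 \big)$. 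On the other side, the product of $\Psi$-images $(r_1n_1, i_1n_1)$ and $(r_2n_2, i_2n_2)$ in $R[N]\oplus I$ uses the excision multiplication and yields $\big( r_1r_2\,n_1n_2,\; (r_1n_1)(i_2n_2) + (r_2n_2)(i_1n_1) + (i_1n_1)(i_2n_2) \big)$, and the two second coordinates agree after collecting the common factor $n_1n_2$.

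The main obstacle, such as it is, is purely bookkeeping: one must be careful that the twisted multiplication of the excision ring interacts correctly with the monoid multiplication, i.e. that the first factor $r$ in the excision product $(r_1i_2+r_2i_1+i_1i_2)$ correctly distributes across the $N$-grading so that the ``cross terms'' $r_1 g_2$ and $r_2 g_1$ on the left match $f_1 g_2$ and $f_2 g_1$ on the right. Since $I = I'R[N]$ is precisely the extension of the $R$-ideal $I'$ to the monoid ring, the containment $\Psi(\text{RHS}) \subseteq R[N]\oplus I$ is automatic, and no subtlety arises in identifying the ideal. Because everything is determined $R[N]$-linearly on the free basis $N$ and both structures are commutative, extending the monomial check to arbitrary finite sums is routine distributivity, so the verification closes without any deeper input. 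I would conclude that $\Psi$ is the desired ring isomorphism.
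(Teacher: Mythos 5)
Your proposal is correct and takes essentially the same approach as the paper: the paper defines the very same coefficient-matching bijection, just written in the opposite direction as $\phi\bigl(\sum r_j n_j, \sum i'_j n_j\bigr) = \sum (r_j, i'_j) n_j$, and leaves the multiplicativity verification implicit, which you carry out explicitly on monomials. No gap; nothing further is needed.
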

	
	\begin{proof}
		Define $\phi: R[N] \oplus I'R[N] \rightarrow (R \oplus I')[N]$ as $\phi(\sum r_jn_j, \sum i'_jn_j) = \sum (r_j, i'_j)n_j$. (We may choose the same index as we are in a finite setup!)
	\end{proof}

	\begin{tr}\label{rel}
		Let $R$ be a ring, $I \subseteq R$ an ideal and $M$ a monoid. Then the following orbit space is trivial for $2n \geq \MD(R):$ 
		\[\frac{{\Um}(2n,R[M],I)}{{\E}({2n},R[M],I)} = \{*\}.\]
	\end{tr}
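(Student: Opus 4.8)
The plan is to reduce the relative statement of Theorem \ref{rel} to the absolute statement of Theorem \ref{main} via the excision ring construction, exactly paralleling the classical passage from relative to absolute unimodular row problems. The central observation is that the machinery for converting a relative orbit problem over $R[M]$ into an absolute orbit problem over the excision ring $(R\oplus I)[M]$ is already assembled in Lemmas \ref{le} and \ref{lem}, so the proof is essentially a matter of threading these together with the main theorem.

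First I would set $I' \subseteq R$ to be the given ideal and let $I = I'R[M]$. By Lemma \ref{lem} we have a ring isomorphism $R[M]\oplus I \simeq (R\oplus I')[M]$. The excision ring $R\oplus I'$ is again a noetherian commutative ring, and crucially $\dim(R\oplus I') = \dim(R) = d$, as recorded in the remark following the excision ring definition. Hence $\MD(R\oplus I') = \MD(R)$, so the hypothesis $2n\geq \MD(R)$ is precisely what is needed to invoke Theorem \ref{main} over the monoid ring $(R\oplus I')[M]$: that is, ${\Um}(2n,(R\oplus I')[M])/{\E}(2n,(R\oplus I')[M]) = \{*\}$.

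Next I would transport a given relative unimodular row. Let $u \in {\Um}(2n, R[M], I)$. Using the natural homomorphism $\phi: R[M]\oplus I \to R[M]$ together with the isomorphism of Lemma \ref{lem}, one lifts $u$ to a unimodular row $\widetilde{u} \in {\Um}(2n, (R\oplus I')[M])$ whose image under $\phi$ recovers $u$ and which reduces to $e_1$ under the splitting $R\oplus I' \to R$ (this last point encodes the relative condition $u \equiv e_1 \bmod I$). Applying the absolute transitivity just established, there is $\widetilde{\sigma}\in {\E}(2n,(R\oplus I')[M])$ with $\widetilde{u}\,\widetilde{\sigma} = e_1$. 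The final step is to push $\widetilde{\sigma}$ back down: by Lemma \ref{le}, the image $\phi(\widetilde{\sigma})$ lands in the relative elementary subgroup ${\E}(2n, R[M], I)$ precisely because $\widetilde{\sigma}$ is elementary over the excision ring, and applying $\phi$ to the equation $\widetilde{u}\,\widetilde{\sigma}=e_1$ yields $u\,\phi(\widetilde{\sigma}) = e_1$ with $\phi(\widetilde{\sigma})\in {\E}(2n,R[M],I)$. This exhibits the orbit of $u$ as trivial, giving the claim.

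The step I expect to require the most care is the bookkeeping of the relative condition through the excision ring isomorphism: one must verify that a row reducing to $e_1$ modulo $I$ genuinely lifts to a row over $(R\oplus I')[M]$ that reduces to $e_1$ under the canonical projection $R\oplus I' \to R$, and conversely that the \emph{relative} membership $\phi(\widetilde{\sigma}) \in {\E}(2n,R[M],I)$ (as opposed to merely ${\E}(2n,R[M])$) follows from Lemma \ref{le}. The correspondence in Lemma \ref{le} is stated for the elementary subgroup and its converse, so the direction I need is exactly the ``converse'' clause; I would be careful that the normal-closure definition of ${\E}(2n,R[M],I)$ is what Lemma \ref{le} delivers. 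Everything else is formal, since the dimension invariance $\dim(R\oplus I')=\dim(R)$ ensures no loss in the stability bound.
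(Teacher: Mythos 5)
Your proposal is correct and is essentially identical to the paper's own proof: lift the relative row to the excision ring, use Lemma \ref{lem} to identify $R[M]\oplus IR[M]\simeq (R\oplus I)[M]$, invoke Theorem \ref{main} (legitimate since $\dim(R\oplus I)=\dim(R)$, so $\MD(R\oplus I)=\MD(R)$), and push the elementary matrix down through $\phi$ using the converse clause of Lemma \ref{le} to land in ${\E}(2n,R[M],IR[M])$. The two bookkeeping points you flag (that the lift of a relative row is genuinely unimodular over the excision ring, and that the converse of Lemma \ref{le} delivers membership in the \emph{relative} elementary group) are exactly the steps the paper treats as implicit, so nothing further is needed.
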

	
	\begin{proof}
		Let $A = R[M]$ and $v=(v_1, \ldots, v_{2n}) \in {\Um}(2n,R[M],I)$. Define $v' =  (v_1 + 0, \ldots, v_n+0) \in {\Um}(2n,R[M] \oplus I)$. By Lemma \ref{lem}, $A \oplus I = (R \oplus I)[M],$ where $R \oplus I$ is a $d$-dimensional noetherian ring.

		Let $\phi$ be the natural homomorphism $\phi: A \oplus I \rightarrow A$. On application of Theorem \ref{main}, we may choose $\sigma' \in {\E}(2n, A \oplus I)$ such that $\sigma'v' = (1 + 0, 0 +0, \ldots, 0+0) \in {\Um}(2n, A \oplus I)$.  Then $\phi(\sigma') = \sigma \in {\E}(2n,A, I),$ by the converse of Lemma \ref{le}. Then $e_1 = \phi(\sigma'v') = \phi(\sigma')(\phi(v')) = \sigma(v)$.
	\end{proof}
	
	As a consequence of this, we have Theorem \ref{main2}:

	\begin{tr}
		Let R be a ring, $I \subseteq R$ an ideal and M a monoid. Then the canonical map 
		\[\varphi_{k}: \frac{{\G}(k,R[M], IR[M])}{{\E}(k,R[M], IR[M])} \rightarrow \widetilde{K}_1(R[M], IR[M])\]
		is surjective for $k \geq \MD(R)- 2$.
	\end{tr}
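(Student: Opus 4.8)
<br>

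The goal is to show the canonical map
\[
\varphi_k:\frac{\G(k,R[M],IR[M])}{\E(k,R[M],IR[M])}\longrightarrow \widetilde K_1(R[M],IR[M])
\]
is surjective once $k\ge \MD(R)-2$. The plan is to reduce this stabilization statement to the transitivity result already established in the relative setting, namely Theorem \ref{rel}. Recall that $\widetilde K_1(R[M],IR[M])$ is the direct limit of the relative quotients $\G(2m,R[M],IR[M])/\E(2m,R[M],IR[M])$ along the stabilization maps induced by $\alpha\mapsto \alpha\perp\mathrm{Id}_2$. Surjectivity of $\varphi_k$ therefore amounts to showing that every class in $\widetilde K_1$ is represented at level $k$; equivalently, that the stabilization map from level $k$ into the limit is already surjective. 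The standard mechanism for proving such surjective stabilization is to show that every $\beta\in \G(k+2,R[M],IR[M])$ can be reduced, modulo $\E(k+2,R[M],IR[M])$, to an element of the form $\beta'\perp \mathrm{Id}_2$ with $\beta'\in\G(k,R[M],IR[M])$.

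First I would set $A=R[M]$ and $J=IR[M]$, and take an arbitrary $\beta\in \G(k+2,A,J)$. The key observation is that the last column (or first row, depending on the convention) of $\beta$ is a unimodular row lying in $\Um(k+2,A,J)$: since $\beta$ preserves the form $\phi_n$ or $\psi_n$ and is congruent to the identity modulo $J$, its relevant row $v$ satisfies $v\equiv e_1\bmod J$ and is unimodular. Because $k+2\ge \MD(R)$, Theorem \ref{rel} applies and gives $\sigma\in\E(k+2,A,J)$ with $v\sigma=e_1$. Thus after multiplying $\beta$ by a suitable elementary element, I may assume the relevant row of $\beta$ is $e_1$; the form-preserving condition then forces the corresponding column structure, so that $\beta$ splits as $\beta''\perp(\text{a }2\times 2\text{ block})$, where the $2\times2$ block is itself elementary (it lies in $\E(2,A,J)$ by the usual computation for $\Sp_2$ or $\O_2$ and isotropy). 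Peeling off this elementary block reduces $\beta$, modulo $\E(k+2,A,J)$, to $\beta'\perp\mathrm{Id}_2$ with $\beta'\in\G(k,A,J)$, which is exactly the image of a level-$k$ element. This proves surjectivity of the stabilization map and hence of $\varphi_k$.

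The main obstacle I anticipate is the bookkeeping that lets one genuinely split off a hyperbolic plane while staying inside the \emph{elementary} group and respecting the form. Killing the unimodular row via Theorem \ref{rel} is clean, but converting ``the first row is $e_1$'' into a clean orthogonal/symplectic direct-sum decomposition $\beta'\perp\mathrm{Id}_2$ requires using the invariance $\beta^T\phi_n\beta=\phi_n$ (resp. with $\psi_n$) to control the remaining entries, and in the orthogonal case the isotropy hypothesis and $\mathrm{char}(R)\ne 2$ are essential to force the residual $2\times 2$ block into $\E(2,A,J)$. One must also verify that all the elementary corrections used at size $k+2$ are genuinely relative, i.e. lie in $\E(k+2,A,J)$ rather than merely $\E(k+2,A)$; here Lemma \ref{le} and the excision-ring device from Theorem \ref{rel} supply the needed control, since they let one realize relative elementary matrices as images of honest elementary matrices over $A\oplus J=(R\oplus I)[M]$. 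With these pieces in place the reduction goes through and the surjectivity bound $k\ge\MD(R)-2$ matches the transitivity threshold $k+2\ge\MD(R)$ of Theorem \ref{rel}.
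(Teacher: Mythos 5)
Your proposal is correct and follows essentially the same route as the paper: represent the class at level $k+2$, kill the distinguished unimodular row (which lies in ${\Um}(k+2,R[M],IR[M])$) using the relative transitivity theorem, and then split off a trivial $2\times 2$ block to descend to ${\G}(k,R[M],IR[M])$. The only difference is presentational: the splitting step you sketch by hand via form-invariance is exactly what the paper outsources to a cited lemma of Basu--Rao (Lemma 3.6 of \cite{Basu-Rao-MR2578583}), and the relativity of the resulting factors, which you flag as the main bookkeeping concern, is handled there by the same observation you make.
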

	
	\begin{proof}
		For $G \in {\G}(r, R[M]),$ consider the following composition of maps
		\begin{align*}
			{\G}(r,R[M], IR[M]) &\rightarrow \frac{{\G}(r,R[M], IR[M])}{{\E}(r,R[M], IR[M])} \rightarrow \widetilde{K}_1(R[M], IR[M])\\
			G  &\longmapsto \bar{G} \longmapsto [G]
		\end{align*} 
		Let $k=\mathbb{D}(R)-2$ and $[S] \in \widetilde{K}_1{(R[M])}$. We require $T \in {\G}(k,R[M], IR[M])$ such that $[S] = [T]$ in $\widetilde{K}_1(R[M], IR[M])$. Without loss of generality we may assume $S \in {\G}(k+2, R[M], IR[M])$. One may note that the last column of $S$ belongs to ${\Um}(k+2, R[M], IR[M])$. By Theorem \ref{rel}, there exists $\theta \in   {\E}(k+2, R[M], IR[M])$ such that 
		\[S\theta e_{k+2} = e_{k+2} \]
		By (\cite{Basu-Rao-MR2578583}, Lemma 3.6) there exists $\sigma' \in {\E}(k+2,R[M])$ and $T \in {\G}(k,R[M])$ such that 
		\[ S\theta = \sigma'\begin{pmatrix}
			T & 0 & 0 \\
			0 & 1 & 0 \\
			0 &0 & 1
		\end{pmatrix}\]
		Since $S\theta$ belongs to ${\G}(k,R[M], IR[M]),$ this in turn compels the decomposed components to also belong to the relative groups, i.e, $\sigma' \in {\E}(k+2,R[M], IR[M])$ and $T \in {\G}(k,R[M], IR[M])$. 
		The proof can be concluded by observing $[T] = [S]$ in $\widetilde{K}_1(R[M])$.
	\end{proof}
	
	\section*{Declarations}

	\begin{itemize}
		\item Funding: Research by the first author was supported by the Centre Franco-Indien pour la Promotion de la Recherche Avancée (CEFIPRA) grant for the period November 2020-- November 2023. Research by the second author was supported by the Indian Institute of Science Education and Research (IISER) Pune post-doctoral research grant for the year 2023.
		
		\item Competing interests: The authors have no conflict of interest to declare that are relevant to this article.

		\item Authors' contributions: All authors have contributed to this work and have read and approved the final manuscript.
		
		\item Acknowledgments: The second author would like to thank Prof. Manoj Keshari for introducing them to the area of monoid rings. The authors would like to dedicate this paper to  Prof. Ravi A. Rao on the occasion of his 70th birthday.
		
		
	\end{itemize}

	\bibliographystyle{abbrv}
	\bibliography{ref}

\begin{thebibliography}{10}

\bibitem{MR526663-Anderson}
D.~F. Anderson.
\newblock Projective modules over subrings of {$k[X,\,Y]$} generated by
  monomials.
\newblock {\em Pacific J. Math.}, 79(1):5--17, 1978.

\bibitem{Rab-Kun}
R.~Basu and K.~Chakraborty.
\newblock Results on {$\rm K_1$} of general quadratic groups.
\newblock {\em Commun. Math.}, 32(1):175--195, 2024.

\bibitem{Basu-Pillar}
R.~Basu, R.~Khanna, and R.~A. Rao.
\newblock The pillars of relative {Q}uillen-{S}uslin theory.
\newblock In {\em Leavitt path algebras and classical {$K$}-theory}, Indian
  Stat. Inst. Ser., pages 211--223. [2020] \copyright 2020.

\bibitem{Basu-Rao-MR2578583}
R.~Basu and R.~A. Rao.
\newblock Injective stability for {$K_1$} of classical modules.
\newblock {\em J. Algebra}, 323(4):867--877, 2010.

\bibitem{Bhatwadekar-SymplecticMR1858341}
S.~M. Bhatwadekar.
\newblock Cancellation theorems for projective modules over a two-dimensional
  ring and its polynomial extensions.
\newblock {\em Compositio Math.}, 128(3):339--359, 2001.

\bibitem{Gubeladze1-bookMR2508056}
W.~Bruns and J.~Gubeladze.
\newblock {\em Polytopes, rings, and {$K$}-theory}.
\newblock Springer Monographs in Mathematics. 2009.

\bibitem{DhorajiaSymplectic-MR2971853}
A.~M. Dhorajia.
\newblock Symplectic modules over overrings of polynomial rings.
\newblock {\em Indian J. Pure Appl. Math.}, 43(4):371--390, 2012.

\bibitem{Gubeladze2-maximalMR937805}
J.~Gubeladze.
\newblock The {A}nderson conjecture and a maximal class of monoids over which
  projective modules are free.
\newblock {\em Mat. Sb. (N.S.)}, 135(177)(2):169--185, 271, 1988.

\bibitem{Gubeladze-umodrow1-MR1161570}
J.~Gubeladze.
\newblock The elementary action on unimodular rows over a monoid ring.
\newblock {\em J. Algebra}, 148(1):135--161, 1992.

\bibitem{Gubeladze3-el2MR1206629}
J.~Gubeladze.
\newblock The elementary action on unimodular rows over a monoid ring. {II}.
\newblock {\em J. Algebra}, 155(1):171--194, 1993.

\bibitem{Gub-geo&al-MR1414169}
J.~Gubeladze.
\newblock Geometric and algebraic representations of commutative cancellative
  monoids.
\newblock {\em Proc. A. Razmadze Math. Inst.}, 113:31--81, 1995.

\bibitem{Gub-seminormality-MR1824231}
J.~Gubeladze.
\newblock Subintegral extensions and unimodular rows.
\newblock In {\em Geometric and combinatorial aspects of commutative algebra
  ({M}essina, 1999)}, volume 217 of {\em Lecture Notes in Pure and Appl.
  Math.}, pages 221--225. 2001.

\bibitem{Gubeladze-unimodularMR3853049}
J.~Gubeladze.
\newblock Unimodular rows over monoid rings.
\newblock {\em Adv. Math.}, 337:193--215, 2018.

\bibitem{MR1958377}
R.~Hazrat and N.~Vavilov.
\newblock {$K_1$} of {C}hevalley groups are nilpotent.
\newblock {\em J. Pure Appl. Algebra}, 179(1-2):99--116, 2003.

\bibitem{Keshari-SymplecticMR2295082}
M.~K. Keshari.
\newblock Euler class group of a {L}aurent polynomial ring: local case.
\newblock {\em J. Algebra}, 308(2):666--685, 2007.

\bibitem{Maria-MKK1}
M.~K. Keshari and M.~A. Mathew.
\newblock Unimodular rows over monoid extensions of overrings of polynomial
  rings.
\newblock {\em J. Commut. Algebra}, 14(4):583--589, 2022.

\bibitem{Kop-MR497932}
V.~I. Kopeiko.
\newblock Stabilization of symplectic groups over a ring of polynomials.
\newblock {\em Mat. Sb. (N.S.)}, 106(148)(1):94--107, 144, 1978.

\bibitem{reid&robert-MR1813497}
L.~Reid and L.~G. Roberts.
\newblock Monomial subrings in arbitrary dimension.
\newblock {\em J. Algebra}, 236(2):703--730, 2001.

\bibitem{Sus-K-MR0469914}
A.~A. Suslin and V.~I. Kopeiko.
\newblock Quadratic modules and the orthogonal group over polynomial rings.
\newblock {\em Zap. Nau\v{c}n. Sem. Leningrad. Otdel. Mat. Inst. Steklov.
  (LOMI)}, 71:216--250, 287, 1977.
\newblock Modules and representations.

\bibitem{Swan-andersonMR1144038}
R.~G. Swan.
\newblock Gubeladze's proof of {A}nderson's conjecture.
\newblock In {\em Azumaya algebras, actions, and modules ({B}loomington, {IN},
  1990)}, volume 124 of {\em Contemp. Math.}, pages 215--250. 1992.

\end{thebibliography}
	
\end{document}